\tikzstyle{lien}=[->,>=stealth,rounded corners=2pt,thick]	\tikzset{individu/.style={draw,#1},individu/.default={}}
\newtheorem{corollary}{Corollary}
\newtheorem{proposition}{Proposition}
\newtheorem{lemma}{Lemma}
\newtheorem{theorem}{Theorem}
\newtheorem{definition}{Definition}
\begin{document}
\title{On scaling limits of multitype Galton-Watson trees with possibly infinite variance}
\author{G. Berzunza\footnote{ {\sc Institut f\"ur Mathematik, Universit\"at Z\"urich, Winterthurerstrasse 190, CH-8057 Z\"urich, Switzerland;} e-mail: gabriel.berzunza@math.uzh.ch}}
\maketitle

\vspace{0.1in}

\begin{abstract} 
\noindent 
In this work, we study asymptotics of multitype Galton–Watson trees with finitely many 
types. We consider critical and irreducible offspring distributions such
that they belong to the domain of attraction of a stable law, where the stability indices 
may differ. We show that after a proper rescaling, their corresponding 
height process converges to the continuous-time height process associated 
with a strictly stable spectrally positive L\'evy process. This gives an analogue 
of a result obtained by  Miermont \cite{Gr} in the case of multitype Galton-Watson 
trees with finite covariance matrices of the offspring distribution. Our approach 
relies on a remarkable decomposition for multitype trees into monotype trees introduced 
in \cite{Gr}.
\end{abstract}

\bigskip 

\noindent {\sc Key words and phrases}: Multitype Galton-Watson tree; Height process; Scaling limit; Continuum random tree.

\section{Introduction}

In the pioneer works \cite{Al1, Al2}, Aldous introduced the continuum random tree as 
the limit of rescaled Galton-Watson (GW) trees conditioned on the total progeny for
offspring distributions having finite variance. Specifically, 
he proved that their properly rescaled contour functions converge in distribution in the 
functional sense to the normalized Brownian excursion, which codes the continuum random tree
as the contour function does 
for discrete trees. This work has motivated the study of the convergence of other rescaled paths 
obtained from GW trees possibly with infinite variance, 
such as the Lukasiewicz path and the height process. Duquesne 
and Le Gall \cite{Du} obtained in full generality an unconditional version of Aldous' 
result. More precisely, they showed that the concatenation of rescaled
height processes (or rescaled contour functions) converges in distribution to the 
so-called continuous-time height process associated to a spectrally positive L\'evy 
process. In particular, when the offspring distribution belongs to the domain of attraction of a 
stable law of index $\alpha \in (1,2]$, Duquesne \cite{Du1} showed that the height processes
of GW trees conditioned on having $n$ vertices converge in distribution to the normalized
excursion of the continuous-time height process associated with a strictly stable
spectrally positive L\'evy process of index $\alpha$. 

The present work has been motivated by the following result of Miermont \cite{Gr}, 
which extends the previous ones on monotype GW trees to multitype GW trees. 
Recall that  multitype GW trees are a generalization of usual GW trees that
describe the genealogy of a population where individuals are differentiated 
by types that determine their offspring distribution. More 
precisely, Miermont establishes an unconditional version for 
the convergence of the rescaled height process
of critical multitype GW trees with finitely many types to the reflected Brownian motion, 
under the hypotheses that the offspring distribution is irreducible and has finite
covariance  matrix. Moreover, under an additional exponential moment 
assumption, he also established that conditionally on the number individuals of a
given type, the limit is given by the normalized Brownian excursion. More recently, 
de Raphelis \cite{Loic} has extended the unconditional result in \cite{Gr} 
for multitype GW trees with infinitely many types, under similar assumptions. 
Informally speaking, these results claim that multitype GW trees behave asymptotically in 
a similar way as the monotype ones, at least in the finite variance case. Therefore, this 
suggests that we should expect an analogous behavior for multitype GW trees that satisfy 
weaker hypotheses. 

Our main goal is to show an analogue result for critical multitype GW trees with finitely 
many types whose offspring distribution is still irreducible, but may have infinite 
variance. Specifically, we are interested in establishing scaling limits for 
their associated height processes, when the offspring distributions belong to the domain 
of attraction of a stable law where the stability indices may differ. 
This will lead us to modify and extend the results of Miermont in \cite{Gr}.


In the rest of the introduction, we will describe our setting more precisely and
give the exact definition of multitype GW trees. We then provide the main assumptions
on the offspring distribution in Section \ref{ass}. This will enable us to 
state our main results in Section \ref{main}. 

\subsection{Multitype plane trees and forests} 

We recall the standard formalism for family trees. Let $U$ be the set of all labels:
\begin{eqnarray*}
 U = \bigcup_{n=0}^{\infty} \mathbb{N}^{n},
\end{eqnarray*}

\noindent where $\mathbb{N} = \{1, 2, \dots \}$ and with the convention $\mathbb{N}^{0} = \{\varnothing\}$. An element of $U$ is a sequence
$u = u_{1}\cdots u_{j}$ of positive integers, and we call $|u| = j$ the length of 
$u$ (with the convention $|\varnothing| = 0$). If 
$u = u_{1}\cdots u_{j}$ and $v = v_{1}\cdots v_{k}$ belong to $U$, we write
$uv = u_{1}\cdots u_{j}v_{1}\cdots v_{k}$ for the concatenation of $u$ and $v$. In
particular, note that $u \varnothing = \varnothing u = u$. For $u \in U$ and
$A \subseteq U$, we let $u A = \{uv: v \in A\}$, and we say that $u$ is a prefix (or ancestor) of 
$v$ if $v \in u U$, in which case we write $u \vdash v$. Recall that the set $U$ comes
with a natural lexicographical order 
$\prec$, such that $u \prec v$ if and only if either $u \vdash v$, or 
$u = wu^{\prime}$, $v = w v^{\prime}$ with nonempty words $u^{\prime}, v^{\prime}$ such that
$u_{1}^{\prime} < v_{1}^{\prime}$. 

A rooted planar tree $\mathbf{t}$ is a finite subset of $U$ which satisfies the following
conditions:
\begin{enumerate}
 \item[I.] $\varnothing \in \mathbf{t}$, we called it the root of $\mathbf{t}$. 
 \item[II.] For $u \in U$ and $i \in \mathbb{N}$, if $ui \in \mathbf{t}$ then 
 $u \in \mathbf{t}$, and $uj \in \mathbf{t}$ for every $1 \leq j \leq i$.
\end{enumerate}

We let $\mathbb{T}$ be the set of all rooted planar trees. We call vertices (or individuals)
the elements of a tree $\mathbf{t} \in \mathbb{T}$, the length $|u|$ is called the 
height of $u \in \mathbf{t}$. We write $c_{\mathbf{t}}(u) = \max\{i \in \mathbb{Z}_{+}: ui \in \mathbf{t} \}$
for the number of children of $u$. The vertices of $\mathbf{t}$ with no children are called leaves.
For $\mathbf{t}$ a planar tree and $u \in \mathbf{t}$, we let $\mathbf{t}_{u} = \{v \in U:uv \in \mathbf{t} \}$
be the subtree of $\mathbf{t}$ rooted at $u$, which is itself a tree. The remaining part $[\mathbf{t}]_{u} = \{u\} \cup (\mathbf{t} \setminus u \mathbf{t}_{u})$
is called the subtree of $\mathbf{t}$ pruned at $u$. The lexicographical order $\prec$ will be
called the depth first order on $\mathbf{t}$.

In addition to trees, we are also interested in forest. A forest $\mathbf{f}$ is a  nonempty
subset of $U$ of the form
\begin{eqnarray*}
 \mathbf{f} = \bigcup_{k} k \mathbf{t}_{(k)},
\end{eqnarray*}

\noindent where $(\mathbf{t}_{(k)})$ is a finite or infinite sequence of trees, which
are called the components of $\mathbf{f}$. In words, a forest may be thought of as a rooted tree 
where the vertices at height one are the roots of the forest components. We let $\mathbb{F}$ be the set of rooted planar forests. For $\mathbf{f} \in \mathbb{F}$, we define the subtree 
$\mathbf{f}_{u} = \{v \in U: uv \in \mathbf{f} \} \in \mathbb{T}$ if $u \in \mathbf{f}$,
and $\mathbf{f}_{u} = \emptyset$ otherwise. Also, let 
$[\mathbf{f}]_{u} =\{ u \} \cup (\mathbf{f} \setminus u \mathbf{f}_{u}) \in \mathbb{F}$. 
With this notation, we observe that the tree components of $\mathbf{f}$ are 
$\mathbf{f}_{1}, \mathbf{f}_{2}, \dots$. We let $c_{\mathbf{f}}(u)$ be the number of
children of $u \in \mathbf{f}$. In particular, $c_{\mathbf{f}}(\varnothing) \in 
\mathbb{N} \cup \{\infty\}$ is the number of components of $\mathbf{f}$. We call 
$|u|-1$ the height of $u \in \mathbf{f}$. Notice that that notion of height differs from the
convention on trees because we want the roots of the forest components to be at height $0$. 

Let $d \in \mathbb{N}$, we call $[d] = \{1, \dots, d\}$ the set of types. A $d$-type 
planar tree, or simply a multitype tree is a pair $(\mathbf{t}, e_{\mathbf{t}})$, where 
$\mathbf{t} \in \mathbb{T}$ and $e_{\mathbf{t}}: \mathbf{t} \rightarrow [d]$ is a function
such that $e_{\mathbf{t}}(u)$ corresponds to the type of a vertex $u \in \mathbf{t}$. We let 
$\mathbb{T}^{(d)}$ be the set of $d$-type rooted planar trees. For $i \in [d]$, we write 
$c_{\mathbf{t}}^{(i)}(u) =
\max \{j \in \mathbb{Z}_{+}: uj \in \mathbf{t} \, \, \text{and} \, \, e_{\mathbf{t}}(uj) = i \}$ for
the number of offsprings of type $i$ of $u \in \mathbf{t}$. Then, 
$c_{\mathbf{t}}(u) = \sum_{i \in [d]} c_{\mathbf{t}}^{(i)}(u)$ is the total number of
children of $u \in \mathbf{t}$. Analogous definitions hold for $d$-type rooted planar forests
$(\mathbf{f}, e_{\mathbf{f}})$, whose set will be denoted by $\mathbb{F}^{(d)}$. 
For sake of simplicity, we shall frequently denote the type functions 
$e_{\mathbf{t}}$, $e_{\mathbf{f}}$ by $e$ when
it is free of ambiguity, and will even denote elements of $\mathbb{T}^{(d)}$, 
$\mathbb{F}^{(d)}$ by $\mathbf{t}$ or $\mathbf{f}$, without mentioning $e$. 
Moreover, it will be understood then that $\mathbf{t}_{u}$, $\mathbf{f}_{u}$, $[\mathbf{t}]_{u}$, $[\mathbf{f}]_{u}$ 
are marked with the appropriated function.

Finally, for $\mathbf{t} \in \mathbb{T}^{(d)}$ and $i \in [d]$, we let $\mathbf{t}^{(i)} = \{ u \in \mathbf{t}: e_{\mathbf{t}}(u) = i \}$
be the set of vertices on $\mathbf{t}$ bearing the type $i$, and $\mathbf{f}^{(i)}$ the 
corresponding notation for the forest $\mathbf{f} \in \mathbb{F}^{(d)}$.

\subsection{Multitype offspring distributions} \label{ass}
 
We set $\mathbb{Z}_{+} = \{ 0, 1, 2, \dots \}$ and $d \in \mathbb{N}$. A $d$-type
offspring distribution $\bm{\mu} = (\mu^{(1)}, \dots, \mu^{(d)})$
is a family of distributions on the space $\mathbb{Z}_{+}^{d}$ of integer-valued 
non-negative sequences of length $d$. It will be useful to introduce the 
Laplace transforms $\varphi = (\varphi^{(1)}, \dots, \varphi^{(d)})$ 
of $\bm{\mu}$ by
\begin{eqnarray*}
 \varphi^{(i)}(\mathbf{s}) = \sum_{\mathbf{z} \in \mathbb{Z}_{+}^{d}} \mu^{(i)}(\{ \mathbf{z} \}) \exp(- \langle \mathbf{z},  \mathbf{s}\rangle ), \hspace*{6mm} \text{for} \, \, i \in [d],
\end{eqnarray*}

\noindent where $\mathbf{s} = \mathbf(s_{1},\dots, s_{d}) \in \mathbb{R}_{+}^{d}$ and 
$\langle x, y \rangle$ is the usual scalar product of two vectors $x,y \in \mathbb{R}^{d}$. 
We let $\mathbf{0}$ be the vector of $\mathbb{R}_{+}^{d}$ with all components equal to $0$. Then, for $i,j \in [d]$, we define the quantity
\begin{eqnarray*}
 m_{ij} = -\frac{\partial \varphi^{(i)}}{\partial s_{j}}(\mathbf{0}) = \sum_{\mathbf{z} \in \mathbb{Z}_{+}^{d}} z_{j} \mu^{(i)}(\{ \mathbf{z} \})
\end{eqnarray*}

\noindent that corresponds to the mean number of children of type $j$, given by an individual of type
$i$. We let $\mathbf{M} := (m_{ij})_{i,j \in [d]}$ be the mean matrix of $\bm{\mu}$, and
$\mathbf{m}_{i} = (m_{i1}, \dots, m_{id}) \in \mathbb{R}_{+}^{d}$ be the mean vector of the
measure $\mu^{(i)}$. 



We say that a measure ${\bm \mu}$ on $\mathbb{Z}_{+}^{d}$ is non-degenerate, if there exists 
at least one $i \in [d]$ so that 
\begin{eqnarray*}
 \mu^{(i)} \left(\left\{ \mathbf{z} \in \mathbb{Z}_{+}^{d}: \sum_{j=1}^{d} z_{j} \neq 1 \right\} \right) > 0.
\end{eqnarray*}

\noindent The offspring distribution that we consider in this work are assumed 
to be non-degenerate in order to avoid cases which will lead to infinite linear trees. 
 
\begin{definition}
The mean matrix (or the offspring distribution $\bm{\mu}$) is called irreducible,
if for every $i,j \in [d]$, there is some $n \in \mathbb{N}$ so that $m_{ij}^{(n)} > 0$, where 
$m_{ij}^{(n)}$ is the $ij$-entry of the matrix $\mathbf{M}^{n}$. 
\end{definition}

Recall also that if $\mathbf{M}$ is irreducible, then according to Perron-Frobenius theorem, 
$\mathbf{M}$ admits a unique eigenvalue $\rho$ which is simple, positive and 
with maximal modulus. Furthermore, the corresponding right and left eigenvectors can be
chosen positive and  we call them $\mathbf{a} = (a_{1}, \dots, a_{d})$ and
$\mathbf{b} = (b_{1}, \dots, b_{d})$ respectively, and
normalize them such that $\langle \mathbf{a}, 1 \rangle = \langle \mathbf{a}, \mathbf{b} \rangle =1$; 
see Chapter V of \cite{At}.  We then say that $\bm{\mu}$
is sub-critical if $\rho < 1$, critical $\rho = 1$ and supercritical if $\rho >1$. 

\paragraph{Main assumptions.} Throughout this work, we consider an offspring 
distribution ${\bm \mu} = \left(\mu^{(1)}, \dots, \mu^{(d)}\right)$ on
$\mathbb{Z}_{+}^{d}$ satisfying the following conditions:
\begin{itemize}
\item[($\mathbf{H}_{1}$)] ${\bm \mu}$ is irreducible, non-degenerate and critical.

\item[($\mathbf{H}_{2}.\mathbf{1}$)] Let $\Delta$ be a nonempty subset of $[d]$.
For every $i \in \Delta$, there 
exists $\alpha_{i} \in (1,2]$ such that the Laplace transform of 
$\mu^{(i)}$ satisfies
\begin{eqnarray*}
 \psi^{(i)}(\mathbf{s}) : = - \log \varphi^{(i)}(\mathbf{s}) = \langle \mathbf{m}_{i}, \mathbf{s} \rangle + | \mathbf{s}|^{\alpha_{i}} \Theta^{(i)} \left( \mathbf{s}/| \mathbf{s}| \right) + o(| \mathbf{s}|^{\alpha_{i}}),
 \hspace*{5mm} \text{as} \hspace*{2mm} | \mathbf{s}| \downarrow 0,
\end{eqnarray*}

\noindent for $\mathbf{s} \in \mathbb{R}_{+}^{d}$ and where 
\begin{eqnarray*}
 \Theta^{(i)}(\mathbf{s}) = \int_{\mathbf{S}^{d}} | \langle \mathbf{s}, \mathbf{y} \rangle |^{\alpha_{i}} \lambda_{i}( {\rm d} \mathbf{y}),
\end{eqnarray*}

\noindent with $\lambda_{i}$ a finite Borel non-zero measure on 
$\mathbf{S}^{d} = \{\mathbf{y} \in \mathbb{R}^{d}: |\mathbf{y}| =1 \}$ such that 
for $\alpha_{i} \in (1,2)$, $\lambda_{i}$ has support in 
$\{\mathbf{y} \in \mathbb{R}^{d}_{+}: |\mathbf{y}| =1 \}$. We write $| \cdot |$ for the 
Euclidean norm. 

\item[($\mathbf{H}_{2}.\mathbf{2}$)] For $i \in 
[d] \setminus \Delta$, the Laplace transform of 
$\mu^{(i)}$ satisfies
\begin{eqnarray*}
 \psi^{(i)}(\mathbf{s}) : = - \log \varphi^{(i)}(\mathbf{s}) = \langle \mathbf{m}_{i}, \mathbf{s} \rangle + o(| \mathbf{s}|^{\alpha_{i}}),
 \hspace*{5mm} \text{as} \hspace*{2mm} | \mathbf{s}| \downarrow 0.
\end{eqnarray*}

\noindent where $\alpha_{i} = \min_{j \in \Delta} \alpha_{j}$. 
\end{itemize}

Let us comment on these assumptions:
\begin{enumerate}
\item We notice that criticality, hypothesis ($\mathbf{H}_{1}$), implies finiteness of all
coefficients of the mean matrix $\mathbf{M}$.

 \item For $i \in [d]$, we say that $\mu^{(i)}$ has finite variance when
\begin{eqnarray*}
 \frac{\partial^{2} \varphi^{(i)}}{\partial s_{j} \partial s_{k}}(\mathbf{0}) < \infty,
 \hspace*{6mm} \text{for} \, \, \, j,k \in [d].
\end{eqnarray*}

\noindent We then write $\mathbf{Q}^{(i)}$ for its covariance matrix. In particular, 
when $\mu^{(i)}$ satisfies the condition ($\mathbf{H}_{2}.\mathbf{1}$) with $\alpha_{i} = 2$, one can
easily verify that it possess finite variance and that  it does not have variance 
when $\alpha_{i} \in (1,2)$. This shows that our assumptions
on the offspring distribution are less restrictive than the ones made in \cite{Gr}, where the 
author assumes finitess on the covariance matrices. 

\item In the case when $\mu^{(i)}$ has finite variance, one can consider a measure
$\lambda_{i}$ on $\mathbf{S}^{d}$ such that 
\begin{eqnarray*}
 \Theta^{(i)}(\mathbf{s}) = \langle \mathbf{s}, \mathbf{Q}^{(i)} \mathbf{s} \rangle,
 \hspace*{6mm} \mathbf{s} \in \mathbb{R}^{d}_{+};
\end{eqnarray*}

see for example Section 2.4 of Samorodnitsky and Taqqu \cite{SaT}. 

 \item Let ${\bm \xi}_{1}, {\bm \xi}_{2}, \dots$ be a sequence of i.i.d. random variables
 on $\mathbb{Z}_{+}^{d}$ with common distribution $\mu^{(i)}$ satisfying ($\mathbf{H}_{2}.\mathbf{1}$).
 We observe that 
\begin{eqnarray} \label{conv2}
- \log \mathbb{E}\left[ \exp \left(  -\left \langle \frac{1}{n^{1/\alpha_{i}}}\sum_{k =1}^{n} \left( {\bm \xi}_{k} - \mathbf{m}_{i}\right), \mathbf{s} \right \rangle  \right) \right] \underset{n \rightarrow \infty}{\rightarrow} | \mathbf{s}|^{\alpha_{i}} \Theta^{(i)} \left( \mathbf{s}/| \mathbf{s}| \right), 
  \hspace*{6mm} \mathbf{s} \in \mathbb{R}^{d}_{+}, 
\end{eqnarray} 
 
\noindent Then, we conclude that
 \begin{eqnarray} \label{conv}
   \frac{1}{n^{1/\alpha_{i}}}\sum_{k =1}^{n} \left( {\bm \xi}_{k} - \mathbf{m}_{i} \right)
   \xrightarrow[n \rightarrow \infty]{d} \mathbf{Y}_{\alpha_{i}},
 \end{eqnarray}
 
 where the convergence is in distribution and $\mathbf{Y}_{\alpha_{i}}$ is a  
 $\alpha_{i}$-stable random vector in $\mathbb{R}^{d}_{+}$ which Laplace exponent satisfies
 \begin{eqnarray*} 
  \psi_{\mathbf{Y}_{\alpha_{i}}}(\mathbf{s}) =  | \mathbf{s}|^{\alpha_{i}} \Theta^{(i)} \left( \mathbf{s}/| \mathbf{s}| \right), 
  \hspace*{6mm} \mathbf{s} \in \mathbb{R}^{d}_{+}.
 \end{eqnarray*}
 
Sato’s book \cite{Sa} and \cite{SaT} are good references for background on 
multivariate stable distributions. On the other hand, we notice from (\ref{conv2}) that the
equation (\ref{conv}) is equivalent to the hypothesis 
($\mathbf{H}_{2}.\mathbf{1}$).                

\item We point out that in the monotype case, that is $d=1$, 
the condition ($\mathbf{H}_{2}.\mathbf{1}$) may be thought as the analogous assumption made in 
\cite{Du1} and \cite{Ko}, in order to get the convergence of the rescaled monotype GW tree to 
the continuum stable tree.

\item For $i \in [d] \setminus \Delta$, let $\mu^{(i)}$ be a measure that satisfies the
hypothesis ($\mathbf{H}_{2}.\mathbf{2}$). We can rewrite the expression of its Laplace
exponent in the following way
\begin{eqnarray*}
 \psi^{(i)}(\mathbf{s}) : = - \log \varphi^{(i)}(\mathbf{s}) = \langle \mathbf{m}_{i}, \mathbf{s} \rangle + | \mathbf{s}|^{\alpha_{i} } \Theta^{(i)} \left( \mathbf{s}/| \mathbf{s}| \right) + o(| \mathbf{s}|^{\alpha_{i}}),
 \hspace*{5mm} \text{as} \hspace*{2mm} | \mathbf{s}| \downarrow 0,
\end{eqnarray*}

\noindent for $\mathbf{s} \in \mathbb{R}_{+}^{d}$ and where 
\begin{eqnarray*}
 \Theta^{(i)}(\mathbf{s}) = \int_{\mathbf{S}^{d}} | \langle \mathbf{s}, \mathbf{y} \rangle |^{\alpha_{i}} \lambda_{i}( {\rm d} \mathbf{y}),
\end{eqnarray*}

\noindent with $\lambda_{i} \equiv 0$. Recall that $\alpha_{i} = \min_{j \in \Delta} \alpha_{j}$ for 
$i \in [d] \setminus \Delta$. This will be useful for the rest of the work. 
\end{enumerate}

Finally, let $\underline{\alpha} = \min_{i \in [d]} \alpha_{i}$ and $\bar{\lambda} = 
\sum_{i \in [d]} \mathds{1}_{\{\underline{\alpha} = \alpha_{i}  \}} a_{i} \lambda_{i}$.
We define
\begin{eqnarray*}
 \bar{c} = \left( \langle \mathbf{a}, {\bm \Theta}(\mathbf{b}) \rangle \right)^{1/\underline{\alpha}}= \left( \int_{\mathbf{S}^{d}} | \langle  \mathbf{b}, \mathbf{y} \rangle |^{\underline{\alpha}} \bar{\lambda}({\rm d} \mathbf{y}) \right)^{1/\underline{\alpha}},
\end{eqnarray*}

\noindent where ${\bm \Theta}(\mathbf{s}) = (\Theta^{(1)}(\mathbf{s})\mathds{1}_{\{\underline{\alpha} = \alpha_{1}\}},
\dots, \Theta^{(d)}(\mathbf{s})\mathds{1}_{\{\underline{\alpha} = \alpha_{d}\}}) \in \mathbb{R}_{+}^{d}$,
for $\mathbf{s} \in \mathbb{R}_{+}^{d}$. We notice that $\bar{c} \not \equiv 0$ due to ($\mathbf{H}_{2}.\mathbf{1}$).
This constant will play a role similar to the constant defined in equation (2) of \cite{Gr}, i.e.,
it corresponds to the total variance of the offspring distribution
${\bm \mu}$, when the covariance matrices are finite. 

\subsection{Multitype Galton-Watson trees and forests} \label{Law}

Let $\bm{\mu}$ be a $d$-type offspring distribution. We define the 
law $\mathbf{P}_{\bm{\mu}}^{(i)}$ (or simply $\mathbf{P}^{(i)}$) 
of a $d$-type GW tree (or multitype GW tree) rooted at a vertex of type $i \in [d]$ and with offspring
distribution ${\bm \mu}$ by
\begin{eqnarray*}
 \mathbf{P}^{(i)}\left( T = \mathbf{t} \right) = \prod_{u \in \mathbf{t}} \frac{c_{\mathbf{t}}^{(1)}(u)! \dots c_{\mathbf{t}}^{(d)}(u)! }{c_{\mathbf{t}}(u)!} \mu^{(e_{\mathbf{t}}(u))}\left( \left \{ c_{\mathbf{t}}^{(d)}(u), \dots, c_{\mathbf{t}}^{(d)}(u) \right \} \right), 
\end{eqnarray*}

\noindent where $T: \mathbb{T}^{(d)} \rightarrow \mathbb{T}^{(d)}$ is the identity map (see e.g., \cite{De},
or Miermont \cite{Gr} for a formal construction of a probability measure on $\mathbb{T}^{(d)}$). In particular, 
under the criticality assumption, ($\mathbf{H}_{1}$), the multitype GW trees with offspring distribution ${\bm \mu}$
are almost surely finite. Similarly, for $\mathbf{x} = (x_{1}, \dots, x_{r})$ a finite sequence with terms in $[d]$, we define
$\mathbf{P}_{\bm{\mu}}^{\mathbf{x}}$ (or simply $\mathbf{P}^{\mathbf{x}}$) the law of 
multitype GW forest with roots of type $\mathbf{x}$ and with offspring distribution ${\bm \mu}$
as the image measure of $\bigotimes_{j=1}^{r} \mathbf{P}^{(x_{j})}$ by the map 
\begin{eqnarray*}
 (\mathbf{t}_{(1)}, \dots, \mathbf{t}_{(r)}) \longmapsto \cup_{k=1}^{r} k \mathbf{t}_{(k)},
\end{eqnarray*}

\noindent i.e., it is the law that makes the identity map $F: \mathbb{F}^{(d)} 
\rightarrow \mathbb{F}^{(d)}$ the random forest whose trees components 
$F_{1}, \dots, F_{r}$ are independent with respective laws $\mathbf{P}^{(x_{1})}, \dots,\mathbf{P}^{(x_{d})}$. A similar definition holds for an infinite sequence $\mathbf{x} \in [d]^{\mathbb{N}}$.

We then say that a $\mathbb{F}^{(d)}$-value random variable $F$ is a multitype 
GW forest with offspring distribution $\bm{\mu}$ and roots of type $\mathbf{x}$
when it has law $\mathbf{P}^{\mathbf{x}}$. Similarly, a 
$\mathbb{T}^{(d)}$-value random variable $T$ with law $\mathbf{P}^{(i)}$ is a multitype 
GW tree with offspring distribution $\bm{\mu}$ and root of type $i \in [d]$.

\subsection{Main results} \label{main}

In this section, we state our main results on the asymptotic behavior of $d$-type 
GW trees with offspring distribution satisfying our main assumptions. In this direction, 
we first recall the definition of the discrete height process associated to a forest 
$\mathbf{f} \in \mathbb{F}$.  

 Let us denote by $\# \mathbf{f}$ the total progeny (or the total number of vertices) of $\mathbf{f}$. 
Let $1=u_{\mathbf{f}}(0) \prec  u_{\mathbf{f}}(1) \prec \dots \prec u_{\mathbf{f}}(\# \mathbf{f}-1)$
be the list of vertices of $\mathbf{f}$ in depth-first order. The height process 
$H^{\mathbf{f}} = (H_{n}^{\mathbf{f}}, n \geq 0)$ is defined by $H_{n}^{\mathbf{f}} = |u_{\mathbf{f}}(n)|-1$, for $0 \leq n < \# \mathbf{f}$, 
with the convention that $H_{n}^{\mathbf{f}} = 0$ for $n \geq \# \mathbf{f}$. Detailed description and properties of this object can be found for example in \cite{Du1}.

Let $Y^{(\underline{\alpha})} = (Y_{s}, s \geq 0)$ be a strictly stable spectrally positive
L\'evy process with index $\underline{\alpha} \in (1,2]$ with Laplace
exponent
\begin{eqnarray*}
 \mathbb{E}[\exp(-\lambda Y_{s})] = \exp(- s\lambda^{\underline{\alpha}}), 
\end{eqnarray*}

\noindent for $\lambda \in \mathbb{R}_{+}$. 

We can now state our main result.

\begin{theorem} \label{teo1}
 Let $F$ be a $d$-type GW forest distributed according to $\mathbf{P}^{\mathbf{x}}$,
for some arbitrary $\mathbf{x} \in [d]^{\mathbb{N}}$. Then, under $\mathbf{P}^{\mathbf{x}}$, 
the following convergence in distribution holds for the Skorohod topology on the
space $\mathbb{D}(\mathbb{R}_{+}, \mathbb{R})$ of right-continuous functions with 
left limits:
  \begin{eqnarray*}
   \left( \frac{1}{n^{1-1/\underline{\alpha}}} H^{F}_{\lfloor ns \rfloor}, s \geq 0 \right) 
   \xrightarrow[n \rightarrow \infty]{d} \left(\frac{1}{\bar{c}} H_{s}, s \geq 0 \right), 
  \end{eqnarray*}
  \noindent where $H$ stands for the continuous-time height process associated with the strictly
  stable spectrally positive L\'evy process $Y^{(\underline{\alpha})}$.
\end{theorem}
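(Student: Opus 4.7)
The plan is to follow Miermont's strategy from \cite{Gr}, reducing the multitype problem to a monotype one via his tree decomposition, but now invoking the Duquesne--Le Gall invariance principle \cite{Du} in its full stable generality rather than only in the Brownian case. Concretely, I would fix a reference type $i_{0} \in [d]$ with $\alpha_{i_{0}} = \underline{\alpha}$ and extract from the forest $F$ the monotype subforest $F^{(i_{0})}$ of vertices bearing type $i_{0}$, equipped with the induced genealogy (a type-$i_{0}$ vertex $v$ is the ``parent'' of a type-$i_{0}$ vertex $u$ when $v$ is the first type-$i_{0}$ proper ancestor of $u$ in $F$). A central point of the construction of \cite{Gr} is that $F^{(i_{0})}$ is itself a Galton--Watson forest under $\mathbf{P}^{\mathbf{x}}$, with some explicit offspring distribution $\nu^{(i_{0})}$ on $\mathbb{Z}_{+}$ described via $\bm{\mu}$; hypothesis ($\mathbf{H}_{1}$) translates into criticality and non-degeneracy of $\nu^{(i_{0})}$.

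The first real step is then to show that $\nu^{(i_{0})}$ lies in the domain of attraction of a spectrally positive $\underline{\alpha}$-stable law, with an explicit rate constant that, up to the scaling factors forced by the passage from $F^{(i_{0})}$ back to $F$, reproduces the prefactor $\bar{c}$. To do this I would study the Laplace transform of $\nu^{(i_{0})}$, which is naturally a fixed point of an operator built from $\varphi^{(1)},\dots,\varphi^{(d)}$ (solving the equation for the generating function of the first type-$i_{0}$ descendants of a type-$i_{0}$ root), and expand it as $\lambda \downarrow 0$ using ($\mathbf{H}_{2}.\mathbf{1}$)--($\mathbf{H}_{2}.\mathbf{2}$). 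The Perron--Frobenius decomposition of $\mathbf{M}$ should cancel the linear $\lambda$-contribution by criticality, so that the leading correction is $c_{*}\lambda^{\underline{\alpha}}$; and since ($\mathbf{H}_{2}.\mathbf{2}$) makes the $o(|\mathbf{s}|^{\underline{\alpha}})$ contributions of types $i \notin \Delta$ negligible, the constant $c_{*}$ picks up exactly the directional integrals $\Theta^{(i)}$ for $\alpha_{i}=\underline{\alpha}$, producing the weighted sum that defines $\bar{c}$.

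Once $\nu^{(i_{0})}$ is known to lie in the stable domain, the Duquesne--Le Gall theorem of \cite{Du} supplies convergence of the rescaled height process of $F^{(i_{0})}$ to a constant multiple of the stable height process $H$. The transfer back to $F$ then follows the pattern of \cite{Gr}: I would prove a time-change lemma $N_{n}^{(i_{0})}/n \to a_{i_{0}}$ (where $N_{n}^{(i_{0})}$ counts type-$i_{0}$ vertices among the first $n$ vertices of $F$ in depth-first order, by an ergodic argument for the type sequence read along the Lukasiewicz walk) together with a height-comparison lemma giving $H_{n}^{F} = b_{i_{0}}^{-1}\, H^{F^{(i_{0})}}_{N_{n}^{(i_{0})}} + o(n^{1-1/\underline{\alpha}})$ uniformly on compact intervals (Perron--Frobenius control of the asymptotic frequency of each type along an ancestral line). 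Combining both rescalings with the monotype limit constant collapses exactly to $1/\bar{c}$, yielding the stated convergence in the Skorokhod topology on $\mathbb{D}(\mathbb{R}_{+},\mathbb{R})$.

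The main obstacle — and the step that genuinely requires new work beyond \cite{Gr} — is the identification of the stable tail of $\nu^{(i_{0})}$: in the finite-variance case Miermont derives the rate constant by summing covariances along a critical sub-branching population via a clean tower formula, but in the stable regime one must show that a random sum indexed by a critical sub-GW forest preserves the stability index $\underline{\alpha}$ and extract the correct directional prefactor from $\bm{\Theta}$, while accommodating the fact that different types may have different indices $\alpha_{i}$. Given this, tightness on $\mathbb{D}(\mathbb{R}_{+},\mathbb{R})$ is inherited from the Duquesne--Le Gall limit combined with the uniform height-comparison estimates, so no separate tightness argument is expected to be difficult.
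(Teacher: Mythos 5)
Your proposal follows essentially the same route as the paper: project the multitype forest $F$ onto a monotype forest via Miermont's construction (the paper's $\Pi^{(i)}(F)$), identify the projected offspring law as critical and in the $\underline{\alpha}$-stable domain of attraction by analyzing the fixed-point equation for its Laplace exponent (the paper does this inductively via its ``$d$- to $(d-1)$-type operation'' in Lemma \ref{lemma1} and Proposition \ref{pro1}), invoke Duquesne's/Duquesne--Le~Gall's monotype invariance principle, and transfer back to $F$ using a convergence-of-types result (the paper's Proposition \ref{pro4}) and a height-comparison estimate (Proposition \ref{pro3}). The one inessential deviation is that you insist on projecting onto a reference type $i_{0}$ with $\alpha_{i_{0}}=\underline{\alpha}$, whereas the paper proves (Proposition \ref{pro1}) that the projection onto \emph{any} type $i\in[d]$ has offspring law in the $\underline{\alpha}$-stable domain, since the removed types contaminate the projected distribution with the heaviest tail regardless of the tail of type $i$ itself; this extra generality is harmless here but becomes genuinely useful for the conditional result on alternating two-type trees, where one conditions on the count of a type not in $\Delta$.
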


In particular, we notice that this result implies the convergence in law of the $d$-type GW forest properly
rescaled towards the stable forest of index $\underline{\alpha}$ for the Gromov-Hausdorff topology; see for example
Lemma 2.4 of \cite{Le1}. On the other hand, when $\underline{\alpha} = 2$, it is well-known that
$(H_{s}, s \geq 0)$ is proportional to the reflected Brownian motion. 
The notion of height process for spectrally positive L\'evy process
has been studied in great detail in \cite{Du}.

Next, for $n \geq 0$, we let $\Upsilon_{n}^{\mathbf{f}}$
be the first letter of $u_{\mathbf{f}}(n)$, with the convention that for $n \geq \# \mathbf{f}$,
it equals the number of components of $\mathbf{f}$. In words, $\Upsilon_{n}^{\mathbf{f}}$
is the index of the tree component to which $u_{\mathbf{f}}(n)$ belongs. 

\begin{theorem} \label{teo2}
For $i \in [d]$, let $F$ be a $d$-type GW forest distributed according to 
$\mathbf{P}^{\mathbf{i}}$, where $\mathbf{i} = (i, i, \dots)$. Then, under $\mathbf{P}^{\mathbf{i}}$,
we have the following convergence in distribution in $\mathbb{D}(\mathbb{R}_{+}, \mathbb{R})$:
  \begin{eqnarray*} 
   \left( \frac{1}{n^{1-1/\underline{\alpha}}} \Upsilon_{\lfloor ns \rfloor}^{F}, s \geq 0 \right) 
   \xrightarrow[n \rightarrow \infty]{d} \left(- \frac{\bar{c}}{b_{i}} I_{s}, s \geq 0 \right), 
  \end{eqnarray*}
  \noindent where $I_{s}$ is the infimum at time $s$ of the strictly
  stable spectrally positive L\'evy process $Y^{(\underline{\alpha})}$.
\end{theorem}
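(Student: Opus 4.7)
\emph{Proof plan.} The idea is to represent $\Upsilon^{F}$ as the running infimum of a Lukasiewicz walk on $F$, and then to establish Skorohod convergence of that walk to the stable L\'evy process $Y^{(\underline{\alpha})}$ via Miermont's monotype reduction. Introduce the Lukasiewicz path of $F$ by $L^{F}_{0}=0$ and $L^{F}_{k+1}-L^{F}_{k}=c_{F}(u_{F}(k))-1$. Since each tree $T$ satisfies $\sum_{u\in T}(c_{T}(u)-1)=-1$ (a consequence of $\sum_{u\in T}c_{T}(u)=|T|-1$), the walk decreases by exactly one unit across every completed component, which gives the exact identity
\[
\Upsilon^{F}_{n} \;=\; 1 \;-\; \min_{0\le k\le n} L^{F}_{k}, \qquad n\ge 0.
\]
The task therefore reduces to a functional convergence of the rescaled walk $L^{F}$, after which continuity of the running-infimum functional on the relevant subspace of sample paths yields the conclusion.

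To produce this walk convergence I would pass to the type-$i$ monotype reduction $F^{[i]}$ of $F$ furnished by Miermont's decomposition: its vertices are the type-$i$ vertices of $F$ equipped with the nearest-type-$i$-ancestor relation, its tree components are in natural bijection with those of $F$, and letting $\Lambda_{n}:=\#\{0\le k\le n:\, e_{F}(u_{F}(k))=i\}$ one has the identity $\Upsilon^{F}_{n}=\Upsilon^{F^{[i]}}_{\Lambda_{n}-1}$. The forest $F^{[i]}$ is itself a critical monotype GW forest. Hypothesis $(\mathbf{H}_{2}.\mathbf{1})$, combined with the explicit Laplace transform of $\mu^{[i]}$ in terms of the $\mu^{(j)}$'s derived by Miermont, should place $\mu^{[i]}$ in the domain of attraction of an $\underline{\alpha}$-stable law. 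The Duquesne-Le Gall scaling limit for monotype GW forests then gives the required convergence of $L^{F^{[i]}}$, and hence, by continuity of the running infimum, of $\Upsilon^{F^{[i]}}$ toward (a scalar multiple of) $-I^{(\underline{\alpha})}$.

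To transfer this limit back to $F$, I would use the law of large numbers $\Lambda_{\lfloor ns\rfloor}/n\to a_{i}b_{i}$ uniformly on compacts in probability, a standard property of critical multitype GW forests that mirrors what Miermont and de Raphelis invoke in the finite-variance setting. Composing the two convergences and tidying the constants through the Perron-Frobenius identities $\langle\mathbf{a},\mathbf{1}\rangle=\langle\mathbf{a},\mathbf{b}\rangle=1$ should then deliver the stated limit with multiplicative factor $\bar{c}/b_{i}$. The hardest step, in my view, is the explicit identification of the stable scaling constant for $\mu^{[i]}$: one must confirm that $\mu^{[i]}$ lies exactly in the $\underline{\alpha}$-stable domain (which irreducibility in $(\mathbf{H}_{1})$ together with $(\mathbf{H}_{2}.\mathbf{1})$ makes plausible, by ensuring that heavy-tailed offspring of types in $\Delta$ propagate into $F^{[i]}$) and pin down its scaling constant in terms of $\bar{c}$, $a_{i}$, $b_{i}$, and the measures $\lambda_{j}$. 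This calculation closely parallels the one underlying Theorem~\ref{teo1}, but the constants reassemble slightly differently here because all roots carry the same type $i$, which is precisely what produces the extra factor $1/b_{i}$ in the limit.
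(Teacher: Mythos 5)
Your plan follows the paper's proof of Theorem~\ref{teo2} essentially step for step: pass to the monotype projection $\Pi^{(i)}(F)$, use the identity $\Upsilon^F_n=\Upsilon^{\Pi^{(i)}(F)}_{\Lambda^F_i(n)}$ (this is exactly where the hypothesis that all roots carry the same type $i$ enters, as you observe), apply the monotype scaling limit to $\Pi^{(i)}(F)$ --- whose offspring law is in the $\underline\alpha$-stable domain, with Laplace exponent identified in Proposition~\ref{pro1} --- then time-change back via the law of large numbers for $\Lambda^F_i$, which is Proposition~\ref{pro4}. Spelling out the Lukasiewicz-walk/running-infimum representation $\Upsilon_n=1-\min_{0\le k\le n}L_k$ simply makes explicit the mechanism behind the monotype result from~\cite{Du}; it is not a different route.

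There is, however, a concrete slip in the constant you feed into the time change. You assert $\Lambda_{\lfloor ns\rfloor}/n\to a_ib_i$, but Proposition~\ref{pro4} gives $\Lambda_i^F(\lfloor ns\rfloor)/n\to a_is$: the limiting proportion of type-$i$ vertices is $a_i$ (recall that the normalisation $\langle\mathbf a,\mathbf 1\rangle=1$ makes $\mathbf a$ a probability vector), and $b_i$ does not enter this law of large numbers. The slip is not harmless. Proposition~\ref{pro1} gives $\bar\psi^{(i)}(s)=s+a_i^{-1}(\bar c/b_i)^{\underline\alpha}s^{\underline\alpha}+o(s^{\underline\alpha})$, so the suitably rescaled Lukasiewicz walk of $\Pi^{(i)}(F)$ converges to $a_i^{-1/\underline\alpha}(\bar c/b_i)Y$. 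Composing with the correct time change $\Lambda^F_i(\lfloor ns\rfloor)\approx a_ins$ and using the self-similarity $I_{a_is}\stackrel{d}{=}a_i^{1/\underline\alpha}I_s$, the $a_i$'s cancel and one lands precisely on $-\bar cI_s/b_i$. If you instead insert $a_ib_i$, the same computation yields $-\bar c\,b_i^{1/\underline\alpha-1}I_s$, which disagrees with the stated limit for every $\underline\alpha\in(1,2]$. The $1/b_i$ in the final constant originates entirely in the Laplace exponent of $\bar\mu^{(i)}$ from Proposition~\ref{pro1}, not in the type-frequency law of large numbers, so be careful not to import $b_i$ there.
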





Let us explain our approach while we describe the organization for the rest of
the paper. We begin by exposing in Section \ref{subs1} the key ingredient, that is, 
a remarkable decomposition of $d$-type forests into monotype forests. The plan then
is to compare the corresponding height processes of the multitype GW forest and
the monotype GW forest, and show that they are close for the Skorohod topology. 
In this direction, we will need to control the shape of
large $d$-type GW forests. First, we establish in Section \ref{subs2}
sub-exponential tail bounds for the height and the number of tree components of $d$-type
GW forests that may be of independent interest. Secondly, we estimate in Section 
\ref{subs3} the asymptotic repartition of vertices of either type. To be a little
more precise,  Proposition \ref{pro4} provides a convergence of types theorem for
multitype GW trees, which extends Theorem 1 (iii) in \cite{Gr}, for the infinite
variance case. Roughly speaking, it shows that all types are homogeneously 
distributed in the limiting tree. We conclude with the proofs of 
Theorem \ref{teo1} and \ref{teo2} in Section \ref{pteo1} by pulling back the
known results of Duquesne and Le Gall \cite{Du} on the convergence of the 
rescaled height process of monotype GW forests to the multitype GW forest. Finally, 
in Section \ref{app}, we present two applications. The first one is an immediately 
consequence of Theorem \ref{teo1} and \ref{teo2} which provides information about
the maximal height of a vertex in a multitype GW tree. Our second application 
involves a particular multitype GW tree, known as alternating two-type GW 
tree which appears frequently in the study of random planar maps. We establish a 
conditioned version of Theorem \ref{teo1} for this special tree. 

The global structure of the proofs is close to that \cite{Gr}. Although we will try 
to make this work as self-contained as possible, we will often refer the reader to 
this paper when the proofs are readily adaptable, and will rather 
focus on the new technical ingredients. One difficulty arises from
the fact that we are assuming weaker assumptions on the offspring distribution
than in \cite{Gr}, we do not assume a finitess of the covariances
matrices of the offspring distributions and this forces us to improve some of 
Miermont's estimates. 

\section{Preliminary results}

Through this section unless we specify otherwise, we let 
$F$ be $d$-type GW forest with law $\mathbf{P}^{\mathbf{x}}$ where 
$\mathbf{x} \in [d]^{\mathbb{N}}$ and such that its
offspring distribution ${\bm \mu} = (\mu^{(1)}, \dots, \mu^{(d)})$ satisfies the main assumptions. 
More precisely, it is important to keep in mind that there is a nonempty subset $\Delta$ of $[d]$
such that the family of distributions $(\mu^{(i)})_{i \in \Delta}$ satisfy
($\mathbf{H}_{2}.\mathbf{1}$) while the remainder $(\mu^{(i)})_{i \in [d] \setminus \Delta}$
fulfills ($\mathbf{H}_{2}.\mathbf{2}$).

\subsection{Decomposition of multitype GW forests} \label{subs1}

In this section, we introduce the projection function $\Pi^{(i)}$ defined by 
Miermont in \cite{Gr} that goes from the set of $d$-types planar forests to the set of monotype planar forests.
Roughly speaking, the function $\Pi^{(i)}$ removes all the vertices of type different
from $i$ and then it connects the remaining vertices with their most recent common ancestor,
preserving the lexicographical order. More precisely, set a $d$-type forest 
$\mathbf{f} \in \mathbb{F}^{d}$ and let $v_{1} \prec v_{2} \prec \cdots$
be the vertices of $\mathbf{f}^{(i)}$ listed in depth-first order such that all ancestors of $v_{k}$ have types different
from $i$. They will be the roots of the new forest. We then build a forest
$\Pi^{(i)}(\mathbf{f}) = \mathbf{f}^{\prime}$ 
with as many tree components as there are elements in $\{ v_{1}, v_{2}, \dots \}$.
Recursively, starting from the set of roots $1,2, \dots$ of $\mathbf{f}^{\prime}$,
for each $u \in \mathbf{f}^{\prime}$, we let $v_{u1}, v_{u2}, \dots, v_{uk}$
be vertices of $(v_{u}\mathbf{f}_{v_{u}}) \setminus \{v_{u}\}$ arranged in
lexicographical order and such that:
\begin{itemize}
 \item[I.] They have type $i$, i.e.  $e_{\mathbf{f}}(v_{uj}) = i$ for $1 \leq j \leq k$,
 \item[II.] All their ancestors on $(v_{u}\mathbf{f}_{v_{u}}) \setminus \{v_{u}\}$ have
 types different from $i$ (if any). 
\end{itemize}

\noindent Then, we add the vertices $u1, \dots, uk$ to $\mathbf{f}^{\prime}$ as children
of $u$, and continue iteratively. See Figure \ref{fig1} for an example when $d=3$. 

\definecolor{sqsqsq}{rgb}{0.12549019607843137,0.12549019607843137,0.12549019607843137}
\definecolor{qqwuqq}{rgb}{0.,0.39215686274509803,0.}
\definecolor{qqqqff}{rgb}{0.,0.,1.}
\definecolor{ffqqqq}{rgb}{1.,0.,0.}

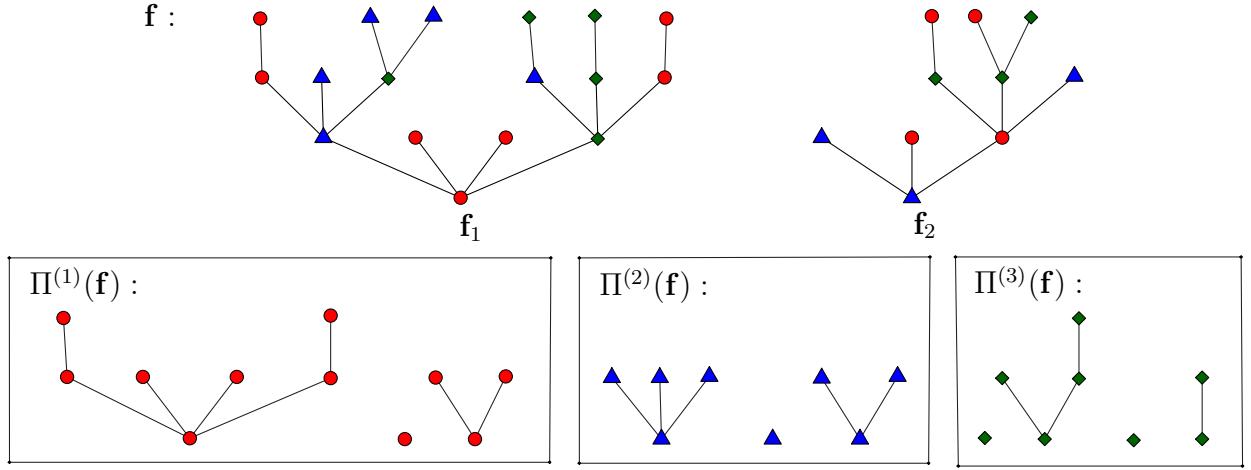
\begin{figure}[H]  \centering
\begin{minipage}[b]{10\linewidth}
\begin{tikzpicture}[line cap=round,line join=round,>=triangle 45,x=1.2cm,y=.8cm]
\clip(-2.5534599981273574,-4.354119095702462) rectangle (12.516868351254201,5.013922851210417);
\draw (0.8,3.)-- (1.48,2.);
\draw (1.46,3.)-- (1.48,2.);
\draw (2.2,2.98)-- (1.48,2.);
\draw (1.48,2.)-- (3.,1.);
\draw (2.5,2.)-- (3.,1.);
\draw (3.5,2.)-- (3.,1.);
\draw (3.,1.)-- (4.52,1.98);
\draw (4.5,2.98)-- (4.52,1.98);
\draw (3.82,3.)-- (4.52,1.98);
\draw (5.26,3.)-- (4.52,1.98);
\draw (7.,2.)-- (8.,1.);
\draw (8.,2.)-- (8.,1.);
\draw (9.,2.)-- (8.,1.);
\draw (8.26,2.98)-- (9.,2.);
\draw (9.,3.)-- (9.,2.);
\draw (9.8,3.02)-- (9.,2.);
\draw [color=sqsqsq] (-0.52,-1.98)-- (0.,-3.);
\draw [color=sqsqsq] (0.52,-1.98)-- (0.,-3.);
\draw [color=sqsqsq] (1.56,-2.)-- (0.,-3.);
\draw [color=sqsqsq] (8.22,4.02)-- (8.26,2.98);
\draw [color=sqsqsq] (8.7,4.02)-- (9.,3.);
\draw [color=sqsqsq] (9.32,4.)-- (9.,3.);
\draw [color=sqsqsq] (3.76,4.)-- (3.82,3.);
\draw [color=sqsqsq] (2.7,4.02)-- (2.2,2.98);
\draw [color=sqsqsq] (2.,4.)-- (2.2,2.98);
\draw [color=sqsqsq] (5.28,3.98)-- (5.26,3.);
\draw [color=sqsqsq] (0.78,3.98)-- (0.8,3.);
\draw [color=sqsqsq] (-1.4,-1.)-- (-1.36,-1.98);
\draw [color=sqsqsq] (-1.36,-1.98)-- (0.,-3.);
\draw [color=sqsqsq] (1.56,-0.96)-- (1.56,-2.);
\draw [color=sqsqsq] (2.722949349947181,-1.9906609483296136)-- (3.1589236097842797,-3.0142526888167174);
\draw [color=sqsqsq] (3.5001208566133135,-1.9717055457280006)-- (3.1589236097842797,-3.0142526888167174);
\draw [color=sqsqsq] (4.675355817913318,-1.9906609483296136)-- (5.225062493360094,-3.0142526888167174);
\draw [color=sqsqsq] (5.206107090758482,-1.9906609483296136)-- (5.225062493360094,-3.0142526888167174);
\draw [color=sqsqsq] (5.755813766205258,-1.9717055457280006)-- (5.225062493360094,-3.0142526888167174);
\draw [color=sqsqsq] (7.,-2.)-- (7.423889195147201,-3.0142526888167174);
\draw [color=sqsqsq] (7.840908052382686,-1.9717055457280006)-- (7.423889195147201,-3.0142526888167174);
\draw [color=sqsqsq] (9.,-2.)-- (9.471072676121404,-3.0142526888167174);
\draw [color=sqsqsq] (9.850180728153664,-2.0096163509312266)-- (9.471072676121404,-3.0142526888167174);
\draw [color=sqsqsq] (9.850180728153664,-1.004980013045736)-- (9.850180728153664,-2.0096163509312266);
\draw [color=sqsqsq] (11.214969715469797,-1.9906609483296136)-- (11.214969715469797,-3.0142526888167174);
\draw [color=sqsqsq] (-2.,0.)-- (4.,0.);
\draw [color=sqsqsq] (4.,0.)-- (3.9862251068017778,-3.3999403020825816);
\draw [color=sqsqsq] (3.9862251068017778,-3.3999403020825816)-- (-1.998042491393394,-3.3999403020825816);
\draw [color=sqsqsq] (-1.998042491393394,-3.3999403020825816)-- (-2.,0.);
\draw [color=sqsqsq] (4.315203168482665,-5.003313800800747E-4)-- (4.315203168482665,-3.4156059240673837);
\draw [color=sqsqsq] (4.315203168482665,-3.4156059240673837)-- (8.18461179872928,-3.4156059240673837);
\draw [color=sqsqsq] (8.18461179872928,-3.4156059240673837)-- (8.200277420714086,0.01516529060472407);
\draw [color=sqsqsq] (8.200277420714086,0.01516529060472407)-- (4.315203168482665,-5.003313800800747E-4);
\draw [color=sqsqsq] (8.482258616440559,0.015165290604728524)-- (8.513589860410168,-3.462602790021792);
\draw [color=sqsqsq] (8.513589860410168,-3.462602790021792)-- (11.662379879355795,-3.462602790021792);
\draw [color=sqsqsq] (11.662379879355795,-3.462602790021792)-- (11.64671425737099,0.015165290604728524);
\draw [color=sqsqsq] (11.64671425737099,0.015165290604728524)-- (8.482258616440559,0.015165290604728524);
\draw (7.8955098657369875,0.9565267571461435) node[anchor=north west] {$\mathbf{f}_{2}$};
\draw (2.8668452086148664,0.9095298911917311) node[anchor=north west] {$\mathbf{f}_{1}$};
\draw (-1.8955038747655841,0.03225506004269895) node[anchor=north west] {$\Pi^{(1)}(\mathbf{f}):$};
\draw (4.417741785110475,0.0165894380578948) node[anchor=north west] {$\Pi^{(2)}(\mathbf{f}):$};
\draw (8.569131611083565,0.047920682027503096) node[anchor=north west] {$\Pi^{(3)}(\mathbf{f}):$};
\draw (-0.6265884939964509,4.402963593803055) node[anchor=north west] {$\mathbf{f}:$};
\draw [color=sqsqsq] (4.48752501031551,4.025564518714591)-- (4.5,2.98);
\begin{scriptsize}
\draw [fill=ffqqqq] (3.,1.) circle (2.5pt);
\draw [fill=ffqqqq] (2.5,2.) circle (2.5pt);
\draw [fill=qqqqff,shift={(1.48,2.)}] (0,0) ++(0 pt,3.75pt) -- ++(3.2475952641916446pt,-5.625pt)--++(-6.495190528383289pt,0 pt) -- ++(3.2475952641916446pt,5.625pt);
\draw [fill=qqwuqq] (4.52,1.98) ++(-2.5pt,0 pt) -- ++(2.5pt,2.5pt)--++(2.5pt,-2.5pt)--++(-2.5pt,-2.5pt)--++(-2.5pt,2.5pt);
\draw [fill=ffqqqq] (3.5,2.) circle (2.5pt);
\draw [fill=ffqqqq] (5.26,3.) circle (2.5pt);
\draw [fill=qqwuqq] (4.5,2.98) ++(-2.5pt,0 pt) -- ++(2.5pt,2.5pt)--++(2.5pt,-2.5pt)--++(-2.5pt,-2.5pt)--++(-2.5pt,2.5pt);
\draw [fill=qqqqff,shift={(3.82,3.)}] (0,0) ++(0 pt,3.75pt) -- ++(3.2475952641916446pt,-5.625pt)--++(-6.495190528383289pt,0 pt) -- ++(3.2475952641916446pt,5.625pt);
\draw [fill=qqwuqq] (2.2,2.98) ++(-2.5pt,0 pt) -- ++(2.5pt,2.5pt)--++(2.5pt,-2.5pt)--++(-2.5pt,-2.5pt)--++(-2.5pt,2.5pt);
\draw [fill=ffqqqq] (0.8,3.) circle (2.5pt);
\draw [fill=qqqqff,shift={(1.46,3.)}] (0,0) ++(0 pt,3.75pt) -- ++(3.2475952641916446pt,-5.625pt)--++(-6.495190528383289pt,0 pt) -- ++(3.2475952641916446pt,5.625pt);
\draw [fill=qqqqff,shift={(8.,1.)}] (0,0) ++(0 pt,3.75pt) -- ++(3.2475952641916446pt,-5.625pt)--++(-6.495190528383289pt,0 pt) -- ++(3.2475952641916446pt,5.625pt);
\draw [fill=ffqqqq] (9.,2.) circle (2.5pt);
\draw [fill=ffqqqq] (8.,2.) circle (2.5pt);
\draw [fill=qqqqff,shift={(7.,2.)}] (0,0) ++(0 pt,3.75pt) -- ++(3.2475952641916446pt,-5.625pt)--++(-6.495190528383289pt,0 pt) -- ++(3.2475952641916446pt,5.625pt);
\draw [fill=qqwuqq] (8.26,2.98) ++(-2.5pt,0 pt) -- ++(2.5pt,2.5pt)--++(2.5pt,-2.5pt)--++(-2.5pt,-2.5pt)--++(-2.5pt,2.5pt);
\draw [fill=qqwuqq] (9.,3.) ++(-2.5pt,0 pt) -- ++(2.5pt,2.5pt)--++(2.5pt,-2.5pt)--++(-2.5pt,-2.5pt)--++(-2.5pt,2.5pt);
\draw [fill=qqqqff,shift={(9.8,3.02)}] (0,0) ++(0 pt,3.75pt) -- ++(3.2475952641916446pt,-5.625pt)--++(-6.495190528383289pt,0 pt) -- ++(3.2475952641916446pt,5.625pt);
\draw [fill=ffqqqq] (0.,-3.) circle (2.5pt);
\draw [fill=ffqqqq] (-0.52,-1.98) circle (2.5pt);
\draw [fill=ffqqqq] (0.52,-1.98) circle (2.5pt);
\draw [fill=ffqqqq] (1.56,-2.) circle (2.5pt);
\draw [fill=ffqqqq] (5.28,3.98) circle (2.5pt);
\draw [fill=ffqqqq] (-1.36,-1.98) circle (2.5pt);
\draw [fill=qqqqff,shift={(2.,4.)}] (0,0) ++(0 pt,3.75pt) -- ++(3.2475952641916446pt,-5.625pt)--++(-6.495190528383289pt,0 pt) -- ++(3.2475952641916446pt,5.625pt);
\draw [fill=qqqqff,shift={(2.7,4.02)}] (0,0) ++(0 pt,3.75pt) -- ++(3.2475952641916446pt,-5.625pt)--++(-6.495190528383289pt,0 pt) -- ++(3.2475952641916446pt,5.625pt);
\draw [fill=qqwuqq] (3.76,4.) ++(-2.5pt,0 pt) -- ++(2.5pt,2.5pt)--++(2.5pt,-2.5pt)--++(-2.5pt,-2.5pt)--++(-2.5pt,2.5pt);
\draw [fill=ffqqqq] (8.22,4.02) circle (2.5pt);
\draw [fill=ffqqqq] (8.7,4.02) circle (2.5pt);
\draw [fill=qqwuqq] (9.32,4.) ++(-2.5pt,0 pt) -- ++(2.5pt,2.5pt)--++(2.5pt,-2.5pt)--++(-2.5pt,-2.5pt)--++(-2.5pt,2.5pt);
\draw [fill=ffqqqq] (-1.4,-1.) circle (2.5pt);
\draw [fill=ffqqqq] (1.56,-0.96) circle (2.5pt);
\draw [fill=ffqqqq] (0.78,3.98) circle (2.5pt);
\draw [fill=ffqqqq] (2.3817521031181474,-3.0142526888167174) circle (2.5pt);
\draw [fill=ffqqqq] (3.1589236097842797,-3.0142526888167174) circle (2.5pt);
\draw [fill=ffqqqq] (2.722949349947181,-1.9906609483296136) circle (2.5pt);
\draw [fill=ffqqqq] (3.5001208566133135,-1.9717055457280006) circle (2.5pt);
\draw [fill=qqqqff,shift={(5.225062493360094,-3.0142526888167174)}] (0,0) ++(0 pt,3.75pt) -- ++(3.2475952641916446pt,-5.625pt)--++(-6.495190528383289pt,0 pt) -- ++(3.2475952641916446pt,5.625pt);
\draw [fill=qqqqff,shift={(4.675355817913318,-1.9906609483296136)}] (0,0) ++(0 pt,3.75pt) -- ++(3.2475952641916446pt,-5.625pt)--++(-6.495190528383289pt,0 pt) -- ++(3.2475952641916446pt,5.625pt);
\draw [fill=qqqqff,shift={(5.206107090758482,-1.9906609483296136)}] (0,0) ++(0 pt,3.75pt) -- ++(3.2475952641916446pt,-5.625pt)--++(-6.495190528383289pt,0 pt) -- ++(3.2475952641916446pt,5.625pt);
\draw [fill=qqqqff,shift={(5.755813766205258,-1.9717055457280006)}] (0,0) ++(0 pt,3.75pt) -- ++(3.2475952641916446pt,-5.625pt)--++(-6.495190528383289pt,0 pt) -- ++(3.2475952641916446pt,5.625pt);
\draw [fill=qqqqff,shift={(6.4571636624649384,-3.0142526888167174)}] (0,0) ++(0 pt,3.75pt) -- ++(3.2475952641916446pt,-5.625pt)--++(-6.495190528383289pt,0 pt) -- ++(3.2475952641916446pt,5.625pt);
\draw [fill=qqqqff,shift={(7.423889195147201,-3.0142526888167174)}] (0,0) ++(0 pt,3.75pt) -- ++(3.2475952641916446pt,-5.625pt)--++(-6.495190528383289pt,0 pt) -- ++(3.2475952641916446pt,5.625pt);
\draw [fill=qqqqff,shift={(7.,-2.)}] (0,0) ++(0 pt,3.75pt) -- ++(3.2475952641916446pt,-5.625pt)--++(-6.495190528383289pt,0 pt) -- ++(3.2475952641916446pt,5.625pt);
\draw [fill=qqqqff,shift={(7.840908052382686,-1.9717055457280006)}] (0,0) ++(0 pt,3.75pt) -- ++(3.2475952641916446pt,-5.625pt)--++(-6.495190528383289pt,0 pt) -- ++(3.2475952641916446pt,5.625pt);
\draw [fill=qqwuqq] (8.807633585064949,-2.9952972862151044) ++(-2.5pt,0 pt) -- ++(2.5pt,2.5pt)--++(2.5pt,-2.5pt)--++(-2.5pt,-2.5pt)--++(-2.5pt,2.5pt);
\draw [fill=qqwuqq] (9.471072676121404,-3.0142526888167174) ++(-2.5pt,0 pt) -- ++(2.5pt,2.5pt)--++(2.5pt,-2.5pt)--++(-2.5pt,-2.5pt)--++(-2.5pt,2.5pt);
\draw [fill=qqwuqq] (9.,-2.) ++(-2.5pt,0 pt) -- ++(2.5pt,2.5pt)--++(2.5pt,-2.5pt)--++(-2.5pt,-2.5pt)--++(-2.5pt,2.5pt);
\draw [fill=qqwuqq] (9.850180728153664,-2.0096163509312266) ++(-2.5pt,0 pt) -- ++(2.5pt,2.5pt)--++(2.5pt,-2.5pt)--++(-2.5pt,-2.5pt)--++(-2.5pt,2.5pt);
\draw [fill=qqwuqq] (10.456753611405277,-3.0332080914183304) ++(-2.5pt,0 pt) -- ++(2.5pt,2.5pt)--++(2.5pt,-2.5pt)--++(-2.5pt,-2.5pt)--++(-2.5pt,2.5pt);
\draw [fill=qqwuqq] (11.214969715469797,-3.0142526888167174) ++(-2.5pt,0 pt) -- ++(2.5pt,2.5pt)--++(2.5pt,-2.5pt)--++(-2.5pt,-2.5pt)--++(-2.5pt,2.5pt);
\draw [fill=qqwuqq] (11.214969715469797,-1.9906609483296136) ++(-2.5pt,0 pt) -- ++(2.5pt,2.5pt)--++(2.5pt,-2.5pt)--++(-2.5pt,-2.5pt)--++(-2.5pt,2.5pt);
\draw [fill=qqwuqq] (4.48752501031551,4.025564518714591) ++(-2.5pt,0 pt) -- ++(2.5pt,2.5pt)--++(2.5pt,-2.5pt)--++(-2.5pt,-2.5pt)--++(-2.5pt,2.5pt);
\draw [fill=qqwuqq] (9.850180728153664,-1.004980013045736) ++(-2.5pt,0 pt) -- ++(2.5pt,2.5pt)--++(2.5pt,-2.5pt)--++(-2.5pt,-2.5pt)--++(-2.5pt,2.5pt);
\draw [fill=black] (-2.,0.) ++(-0.5pt,0 pt) -- ++(0.5pt,0.5pt)--++(0.5pt,-0.5pt)--++(-0.5pt,-0.5pt)--++(-0.5pt,0.5pt);
\draw [fill=black] (4.,0.) ++(-0.5pt,0 pt) -- ++(0.5pt,0.5pt)--++(0.5pt,-0.5pt)--++(-0.5pt,-0.5pt)--++(-0.5pt,0.5pt);
\draw [fill=black] (3.9862251068017778,-3.3999403020825816) ++(-0.5pt,0 pt) -- ++(0.5pt,0.5pt)--++(0.5pt,-0.5pt)--++(-0.5pt,-0.5pt)--++(-0.5pt,0.5pt);
\draw [fill=black] (-1.998042491393394,-3.3999403020825816) ++(-0.5pt,0 pt) -- ++(0.5pt,0.5pt)--++(0.5pt,-0.5pt)--++(-0.5pt,-0.5pt)--++(-0.5pt,0.5pt);
\draw [fill=black] (4.315203168482665,-5.003313800800747E-4) ++(-0.5pt,0 pt) -- ++(0.5pt,0.5pt)--++(0.5pt,-0.5pt)--++(-0.5pt,-0.5pt)--++(-0.5pt,0.5pt);
\draw [fill=black] (4.315203168482665,-3.4156059240673837) ++(-0.5pt,0 pt) -- ++(0.5pt,0.5pt)--++(0.5pt,-0.5pt)--++(-0.5pt,-0.5pt)--++(-0.5pt,0.5pt);
\draw [fill=black] (8.18461179872928,-3.4156059240673837) ++(-0.5pt,0 pt) -- ++(0.5pt,0.5pt)--++(0.5pt,-0.5pt)--++(-0.5pt,-0.5pt)--++(-0.5pt,0.5pt);
\draw [fill=black] (8.200277420714086,0.01516529060472407) ++(-0.5pt,0 pt) -- ++(0.5pt,0.5pt)--++(0.5pt,-0.5pt)--++(-0.5pt,-0.5pt)--++(-0.5pt,0.5pt);
\draw [fill=black] (8.482258616440559,0.015165290604728524) ++(-0.5pt,0 pt) -- ++(0.5pt,0.5pt)--++(0.5pt,-0.5pt)--++(-0.5pt,-0.5pt)--++(-0.5pt,0.5pt);
\draw [fill=black] (8.513589860410168,-3.462602790021792) ++(-0.5pt,0 pt) -- ++(0.5pt,0.5pt)--++(0.5pt,-0.5pt)--++(-0.5pt,-0.5pt)--++(-0.5pt,0.5pt);
\draw [fill=black] (11.662379879355795,-3.462602790021792) ++(-0.5pt,0 pt) -- ++(0.5pt,0.5pt)--++(0.5pt,-0.5pt)--++(-0.5pt,-0.5pt)--++(-0.5pt,0.5pt);
\draw [fill=qqwuqq] (11.64671425737099,0.015165290604728524) ++(-0.5pt,0 pt) -- ++(0.5pt,0.5pt)--++(0.5pt,-0.5pt)--++(-0.5pt,-0.5pt)--++(-0.5pt,0.5pt);
\end{scriptsize}
\end{tikzpicture}
\end{minipage}
\caption{{\small A realization of the projection $\Pi^{(i)}$ for a
three-type planar forest with two tree components, type $1$ vertices 
represented with circles, type $2$ vertices with triangles and type $3$ vertices 
with diamonds.}} \label{fig1}
\end{figure}



We have the following key result:

\begin{proposition} \label{pro1}
 Let $\mathbf{x} \in [d]^{\mathbb{N}}$. Then, under the law $\mathbf{P}^{\mathbf{x}}$, 
 the forest $\Pi^{(i)}(F)$ is a monotype GW forest with critical non-degenerate offspring 
 distribution $\bar{\mu}^{(i)}$ that is in the domain of attraction of a stable 
 law of index $\underline{\alpha} = \min_{j \in [d]} \alpha_{j}$. More precisely, the Laplace
 exponent of $\bar{\mu}^{(i)}$ satisfies
 \begin{eqnarray*}
  \bar{\psi}^{(i)}(s) = s +
\frac{1}{a_{i}} \left(\frac{\bar{c}}{b_{i}} s \right)^{\underline{\alpha}}  
  + o(s^{\underline{\alpha}}),
  \hspace*{6mm} s \downarrow 0,
 \end{eqnarray*}
 
 \noindent where $s \in \mathbb{R}_{+}$. 
 \end{proposition}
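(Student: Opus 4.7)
The proof splits naturally into two parts. First, the fact that $\Pi^{(i)}(F)$ is a monotype Galton-Watson forest is a consequence of the branching property of multitype Galton-Watson trees. Given any type-$i$ vertex $v$ in $F$, the subtree $F_v$ is, conditionally on $v$ having type $i$, an independent copy of a multitype Galton-Watson tree rooted at type $i$; by the construction of $\Pi^{(i)}$, the offspring of $v$ in $\Pi^{(i)}(F)$ coincides with the set of first type-$i$ descendants of $v$ inside $F_v$. Hence the offspring configurations at distinct type-$i$ vertices are iid, with common law $\bar{\mu}^{(i)}$ equal to the distribution of the number $N^{(i)}$ of first type-$i$ descendants (strictly below the root) in a multitype Galton-Watson tree rooted at a vertex of type $i$. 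Non-degeneracy of $\bar{\mu}^{(i)}$ is a consequence of the irreducibility and non-degeneracy of ${\bm \mu}$, which prevents $N^{(i)}$ from being a.s.\ constant.

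To compute the Laplace exponent $\bar{\psi}^{(i)}(s) = -\log \mathbb{E}\bigl[\exp(-sN^{(i)})\bigr]$ to order $s^{\underline{\alpha}}$, I would follow the strategy of Miermont \cite{Gr} and introduce the auxiliary functions $h_j(s) = \mathbb{E}^{(j)}\bigl[\exp(-s N^{(j)})\bigr]$ for $j \in [d]$, where $N^{(j)}$ denotes the analogous count in a tree rooted at type $j$. Conditioning on the offspring of the root yields the fixed-point system
\[
 h_j(s) = \varphi^{(j)}(\mathbf{s}^*(s)), \qquad j \in [d],
\]
where $\mathbf{s}^*(s) \in \mathbb{R}_{+}^{d}$ has $i$-th coordinate equal to $s$ and $k$-th coordinate equal to $u_k(s) := -\log h_k(s)$ for $k \neq i$. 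Taking $-\log$ gives $u_j(s) = \psi^{(j)}(\mathbf{s}^*(s))$ and in particular $\bar{\psi}^{(i)}(s) = \psi^{(i)}(\mathbf{s}^*(s))$.

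I would then expand as $s \to 0^+$ in two stages. Substituting $u_j(s) = \lambda_j s + o(s)$ and using $(\mathbf{H}_{1})$, the coefficients $(\lambda_j)$ satisfy $(\mathbf{M}\boldsymbol{\lambda})_j = \lambda_j$ for $j \neq i$ with $\lambda_i = 1$. The Perron-Frobenius theorem, combined with criticality, gives the unique solution $\boldsymbol{\lambda} = \mathbf{a}/a_i$. In particular $\mathbb{E}[N^{(i)}] = 1$, so $\bar{\mu}^{(i)}$ is critical. Writing $u_j(s) = \lambda_j s + \epsilon_j(s)$ and using $(\mathbf{H}_{2}.\mathbf{1})$, $(\mathbf{H}_{2}.\mathbf{2})$, together with the homogeneity of $\Theta^{(j)}$, one derives
\[
 \epsilon_j(s) = \sum_{k \neq i} m_{jk}\, \epsilon_k(s) + (s/a_i)^{\alpha_j}\, \Theta^{(j)}(\mathbf{a}) + o(s^{\alpha_j}), \qquad j \neq i,
\]
with the convention that $\Theta^{(j)} \equiv 0$ for $j \in [d] \setminus \Delta$. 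At order $s^{\underline{\alpha}}$, only the coordinates $j$ with $\alpha_j = \underline{\alpha}$ receive a direct forcing, but the coupling through $\mathbf{M}$ propagates $s^{\underline{\alpha}}$-contributions to all coordinates. Setting $e_j = \lim_{s \to 0^+} s^{-\underline{\alpha}}\epsilon_j(s)$ for $j \neq i$ and extending by $e_i := 0$, the resulting singular linear system involving $(I - \mathbf{M})\mathbf{e}$ is handled by pairing with the left Perron-Frobenius eigenvector $\mathbf{b}$; the identity $\mathbf{b}^{\top}(I - \mathbf{M}) = 0$ isolates the combination $\sum_{k \neq i} m_{ik} e_k$, and a direct computation using the definition of $\bar{c}$ identifies the coefficient of $s^{\underline{\alpha}}$ in $\bar{\psi}^{(i)}(s) = \psi^{(i)}(\mathbf{s}^*(s))$ with $\frac{1}{a_i}\bigl(\bar{c}/b_i\bigr)^{\underline{\alpha}}$.

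The principal difficulty lies in this last step: managing the asymptotic expansion when the stability indices $\alpha_j$ differ across types, controlling the error terms uniformly along the recursion, and recognizing the emerging constant in terms of $\bar{c}$ and the Perron-Frobenius eigenvectors $\mathbf{a}$ and $\mathbf{b}$. Compared with Miermont's finite-variance treatment, the main new ingredient is the need to track heterogeneous orders $s^{\alpha_j}$ across coordinates while ensuring that no contribution of order $s^{\underline{\alpha}}$ is lost.
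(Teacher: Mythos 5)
Your route is genuinely different from the paper's. The paper eliminates types one at a time via the $d$- to $(d-1)$-type operation $\tilde{\Pi}$: at each stage one analyzes a single scalar implicit equation $\tilde{\psi}^{(d)}(\mathbf{s}) = \psi^{(d)}(\mathbf{s}, \tilde{\psi}^{(d)}(\mathbf{s}))$, verifies through Lemma~\ref{lemma1} that criticality, irreducibility, and the domain-of-attraction form of the Laplace exponent are all preserved, and tracks how the quantity $\langle \mathbf{a}, {\bm \Theta}(\mathbf{b}) \rangle$ transforms under the reduction, recovering the constant by induction. You instead set up the full $d$-dimensional fixed-point system for $u_k(s)=-\log h_k(s)$ and expand it to two orders at once. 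That skeleton is sound, but the paper's sequential structure is not cosmetic: it is what makes the error control tractable, since at each step there is only one new implicit function. Your global version leaves unaddressed precisely the point you flag at the end, namely why $\epsilon_j(s)=O(s^{\underline{\alpha}})$ uniformly across coordinates with heterogeneous $\alpha_j$, and why the limits $e_j=\lim_{s\downarrow 0}s^{-\underline{\alpha}}\epsilon_j(s)$ exist at all. As written this is a genuine gap, not a deferred routine check.

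There is also a concrete algebraic error: the Perron--Frobenius eigenvectors are swapped. In the convention the paper actually uses in its proofs (see the displayed identities $\sum_{k} b_{k} m_{dk} = b_{d}$ and $\sum_{k} a_{k} m_{kd} = a_{d}$ in the proof of this very proposition), $\mathbf{a}$ is the \emph{left} eigenvector of $\mathbf{M}$ and $\mathbf{b}$ the \emph{right} one. The relation $(\mathbf{M}\boldsymbol{\lambda})_j=\lambda_j$ for $j\neq i$, $\lambda_i=1$ is a right-eigenvector equation and is therefore solved by $\boldsymbol{\lambda}=\mathbf{b}/b_i$, not $\mathbf{a}/a_i$. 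Consequently the forcing term in your recursion should be $(s/b_i)^{\alpha_j}\Theta^{(j)}(\mathbf{b})$, and the singular linear system for the $(e_j)$ is closed by pairing with the \emph{left} eigenvector $\mathbf{a}$, using $\mathbf{a}^{\top}(I-\mathbf{M})=0$. With those corrections the compatibility condition indeed yields $e_i = a_i^{-1}b_i^{-\underline{\alpha}}\langle\mathbf{a},{\bm \Theta}(\mathbf{b})\rangle = a_i^{-1}(\bar{c}/b_i)^{\underline{\alpha}}$, matching the statement. As written, however, your manipulations would produce $b_i^{-1}a_i^{-\underline{\alpha}}\langle\mathbf{b},{\bm \Theta}(\mathbf{a})\rangle$, which is not $\bar{c}^{\underline{\alpha}}$, so the final identification you claim does not actually close.
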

 

The proof of this proposition is based in an inductive argument
that consists in removing types one by one until we are left with a 
monotype GW forests. More precisely, we suppose that the 
vertices with type $d$ are removed from the forest 
$\mathbf{f} \in \mathbb{F}^{(d)}$. We point out that one can 
delete any other type similarly. We let 
$v_{1} \prec v_{2} \prec \dots$ be the vertices of $\mathbf{f}$ listed
in depth-first order such that $e_{\mathbf{f}}(v_{i}) \neq d$ and 
$e_{\mathbf{f}}(v) = d$ for every $v \vdash v_{i}$. These are the vertices
of $\mathbf{f}$ with type different from $d$ which does not have ancestors of type $d$.
We build a forest $\tilde{\Pi}(\mathbf{f}) = \tilde{\mathbf{f}}$ recursively. We start
from the set $\{ v_{1}, v_{2}, \dots \}$ and for each $v_{u} \in \tilde{\mathbf{f}}$, 
we let $v_{u1} \prec \dots \prec v_{uk}$ be the descendants of $v_{u}$ in 
$\mathbf{f}$ such that:
\begin{itemize}
 \item[I.] They have type different from $d$.
 \item[II.] For $1 \leq j \leq k$, all the vertices between $v_{u}$ and $v_{uj}$ have
 type $d$ (if any). 
\end{itemize}

\noindent Then, we add these vertices to $\tilde{\mathbf{f}}$, and continue in an obvious 
way. We naturally associated the type $e_{\mathbf{f}}$ to the vertices
of $\tilde{\Pi}(\mathbf{f})$. In the sequel, we refer to this procedure as the 
{\bf $\mathbf{d}$- to $(\mathbf{d-1})$-type operation}. 

The following lemma shows that after performing the 
$d$-to $(d-1)$-type operation in the multitype GW
forest $F$, we obtain a $(d-1)$-type GW forest which 
offspring distribution still satisfying our 
main assumptions. First, we fix some notation.
We denote by $\tilde{\mathbf{m}}_{d}$ the 
vector in $\mathbb{R}_{+}^{d-1}$ with entries
\begin{eqnarray*}
\tilde{m}_{dk} = \frac{m_{dk}}{1-m_{dd}}, \hspace*{6mm} \text{for} \, \, k \in [d-1], 
\end{eqnarray*}

\noindent and for $j \in [d-1]$, we write $\tilde{\mathbf{m}}_{j}$ for the
vector in $\mathbb{R}_{+}^{d-1}$ with entries
 \begin{eqnarray*}
  \tilde{m}_{jk} = m_{jk} + \frac{m_{jd}m_{dk}}{1-m_{dd}}, \hspace*{6mm} \text{for} \, \, k \in [d-1]. 
 \end{eqnarray*}
 
 \noindent We stress that due to the irreducibility assumption on the mean matrix 
 $\mathbf{M}$ of the measure ${\bm \mu }$, we have that $1-\mu_{jj} > 0$ for all $j \in [d]$. Thus, all the previous quantities are finite.
 

\begin{lemma} \label{lemma1}
 Let $\mathbf{x} \in [d]^{\mathbb{N}}$. Then, under the law $\mathbf{P}^{\mathbf{x}}$,
 the forest $\tilde{\Pi}(F)$ is a non-degenerate, irreducible, critical $(d-1)$-type GW  
 forest. Moreover, its offspring distribution $\tilde{\bm{\mu}} = (\tilde{\mu}^{(1)}, \dots, \tilde{\mu}^{(d-1)})$
 has Laplace exponents
    \begin{eqnarray*} 
 \tilde{\psi}^{(j)}(\mathbf{s}) = \langle \tilde{\mathbf{m}}_{j}, \mathbf{s} \rangle  +
 |\mathbf{s}|^{\tilde{\alpha}_{j}} \tilde{\Theta}^{(j)} \left( \mathbf{s}/ |\mathbf{s}| \right)+o(|\mathbf{s}|^{\tilde{\alpha}_{j}}),
\hspace*{6mm} |\mathbf{s}| \downarrow 0,
  \end{eqnarray*}
 
  \noindent for $j \in [d-1]$, $\mathbf{s} \in \mathbb{R}_{+}^{d-1}$,
  $\tilde{\alpha}_{j} = \min(\alpha_{j}, \alpha_{d})$ and 
 \begin{eqnarray*}
  \tilde{\Theta}^{(j)}(\mathbf{s}) = \int_{\mathbf{S}^{d}} 
  |\langle \mathbf{s}, \tilde{\mathbf{y}} + y_{d} \tilde{\mathbf{m}}_{d}  \rangle|^{\tilde{\alpha}_{j}} 
  \tilde{\lambda}_{j} ({\rm d} \mathbf{y}),
 \end{eqnarray*}

 \noindent where $\tilde{\lambda}_{j} = \mathds{1}_{\{ \tilde{\alpha}_{j} = \alpha_{j}  \}} \lambda_{j}  +  \mathds{1}_{\{ \tilde{\alpha}_{j} = \alpha_{d}  \}}  \frac{m_{jd}}{1-m_{dd}} \lambda_{d}$,
 $\mathbf{y} = (y_{1}, \dots, y_{d}) \in \mathbb{R}^{d}$ and 
 $\tilde{\mathbf{y}} = (y_{1}, \dots, y_{d-1}) \in \mathbb{R}^{d-1}$. 
 \end{lemma}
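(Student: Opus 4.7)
The plan is to exploit the branching property of multitype GW trees to show that $\tilde{\Pi}(F)$ is itself a multitype GW forest, and then to derive its offspring distribution via a fixed-point equation for an auxiliary Laplace exponent. First, I would check that $\tilde{\Pi}(F)$ has the branching property: for a vertex $u \in \tilde{\Pi}(F)$ of type $j \in [d-1]$, its children in $\tilde{\Pi}(F)$ are a deterministic function of the sub-forest of $F$ consisting of the descendants of $u$ lying weakly below the first non-$d$-type vertex along each lineage. By the usual branching property of multitype GW trees, these sub-forests are conditionally independent given types, and equidistributed for vertices of the same type. This identifies $\tilde{\Pi}(F)$ as a $(d-1)$-type GW forest with some offspring distribution $\tilde{\bm{\mu}}$ and roots of the same types as $F$ (the roots of $F$ are already non-$d$-type when we read off the appropriate component of $\mathbf{x}$; minor bookkeeping adjusts this if needed).

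The next step is to write down the Laplace exponent of $\tilde{\mu}^{(j)}$. For a type-$d$ vertex in $F$, let $\mathbf{N} = (N_{1}, \dots, N_{d-1})$ count its first non-$d$-type descendants by type, and set $G(\mathbf{s}) = -\log \mathbb{E}^{(d)}[e^{-\langle \mathbf{N}, \mathbf{s}\rangle}]$. Conditioning on the children of the root and using independence of sub-trees gives the fixed-point equation
\begin{equation*}
G(\mathbf{s}) = \psi^{(d)}\bigl(s_{1},\dots,s_{d-1},G(\mathbf{s})\bigr), \qquad \mathbf{s} \in \mathbb{R}_{+}^{d-1}.
\end{equation*}
The same argument then shows that for $j \in [d-1]$,
\begin{equation*}
\tilde{\psi}^{(j)}(\mathbf{s}) = \psi^{(j)}\bigl(s_{1},\dots,s_{d-1},G(\mathbf{s})\bigr),
\end{equation*}
since a type-$j$ vertex's $\tilde{\Pi}$-children consist of its direct non-$d$ children plus, for each direct type-$d$ child, an independent copy of $\mathbf{N}$.

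The heart of the proof is the asymptotic expansion of $G$ and hence of $\tilde{\psi}^{(j)}$. Using hypothesis ($\mathbf{H}_{1}$) and irreducibility one has $m_{dd} < 1$, so reading off the linear part of $\psi^{(d)}$ gives $G(\mathbf{s}) = \langle \tilde{\mathbf{m}}_{d},\mathbf{s}\rangle + R(\mathbf{s})$ with $R(\mathbf{s}) = o(|\mathbf{s}|)$ as $|\mathbf{s}| \downarrow 0$. Plugging this back into the fixed-point equation and using the expansion of $\psi^{(d)}$ from ($\mathbf{H}_{2}.\mathbf{1}$)/($\mathbf{H}_{2}.\mathbf{2}$), together with the identity $\langle (s_{1},\dots,s_{d-1},\langle \tilde{\mathbf{m}}_{d},\mathbf{s}\rangle), \mathbf{y}\rangle = \langle \mathbf{s}, \tilde{\mathbf{y}} + y_{d}\tilde{\mathbf{m}}_{d}\rangle$, one obtains
\begin{equation*}
R(\mathbf{s}) = \frac{1}{1-m_{dd}} \int_{\mathbf{S}^{d}} |\langle \mathbf{s}, \tilde{\mathbf{y}} + y_{d}\tilde{\mathbf{m}}_{d}\rangle|^{\alpha_{d}} \lambda_{d}({\rm d}\mathbf{y}) + o(|\mathbf{s}|^{\alpha_{d}}).
\end{equation*}
Substituting into $\tilde{\psi}^{(j)}$ produces two nonlinear contributions: one of order $|\mathbf{s}|^{\alpha_{j}}$ from the $\Theta^{(j)}$-term of $\psi^{(j)}$, and one of order $|\mathbf{s}|^{\alpha_{d}}$ inherited through $G$. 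Keeping only those of order $|\mathbf{s}|^{\tilde{\alpha}_{j}}$ with $\tilde{\alpha}_{j} = \min(\alpha_{j},\alpha_{d})$ yields the stated expression for $\tilde{\Theta}^{(j)}$ and $\tilde{\lambda}_{j}$, with the indicator $\mathds{1}_{\{\tilde{\alpha}_{j}=\alpha_{j}\}}$ and $\mathds{1}_{\{\tilde{\alpha}_{j}=\alpha_{d}\}}$ arising naturally from comparing the two exponents. This bookkeeping, and in particular carrying the error terms through the fixed-point equation with the correct uniform control in $\mathbf{s}$, is the main technical obstacle; when $\alpha_{j} = \alpha_{d}$ one must check that both contributions genuinely add in the limit.

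Finally, it remains to verify non-degeneracy, criticality, and irreducibility of $\tilde{\bm{\mu}}$. Criticality reduces to showing that $(a_{1},\dots,a_{d-1})$ is a left eigenvector of $\tilde{\mathbf{M}} = (\tilde{m}_{jk})$ with eigenvalue $1$, which follows directly from the identities $a_{k} = \sum_{j=1}^{d-1} a_{j} m_{jk} + a_{d} m_{dk}$ and $a_{d}(1-m_{dd}) = \sum_{j=1}^{d-1} a_{j} m_{jd}$ given by criticality of $\mathbf{M}$. Irreducibility of $\tilde{\mathbf{M}}$ follows from that of $\mathbf{M}$: any path in $\mathbf{M}$ from $j$ to $k$ can be collapsed into an $\tilde{\mathbf{M}}$-path by absorbing intermediate type-$d$ vertices, using that such collapsing yields strictly positive entries in $\tilde{\mathbf{M}}^{n}$. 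Non-degeneracy of $\tilde{\bm{\mu}}$ is inherited from that of $\bm{\mu}$ by a short contradiction argument based on the structure of $\tilde{\psi}^{(j)}$ and the fact that $\Theta^{(j)}$ or $\Theta^{(d)}$ must be non-zero for at least one $j \in [d]$.
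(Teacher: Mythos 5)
Your proof follows essentially the same route as the paper's: establish the fixed-point relation $\tilde{\psi}^{(j)}(\mathbf{s})=\psi^{(j)}(\mathbf{s},\tilde{\psi}^{(d)}(\mathbf{s}))$ with $\tilde{\psi}^{(d)}$ (your $G$) solving $\tilde{\psi}^{(d)}(\mathbf{s})=\psi^{(d)}(\mathbf{s},\tilde{\psi}^{(d)}(\mathbf{s}))$, first extract the linear part $\langle\tilde{\mathbf{m}}_d,\mathbf{s}\rangle+o(|\mathbf{s}|)$, then bootstrap to obtain the stable correction of order $|\mathbf{s}|^{\alpha_d}$, and finally substitute into $\tilde{\psi}^{(j)}$ to identify $\tilde{\Theta}^{(j)}$ and $\tilde{\lambda}_j$. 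The only notable difference is cosmetic: you re-derive the branching/criticality/irreducibility structure and the fixed-point equations directly, whereas the paper imports them from Lemma~3 of Miermont's work, but the analytical core of the argument is identical.
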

 
 It is important to stress that $\tilde{\lambda}_{j} \equiv 0$ when $j,d \in [d] \setminus \Delta$, 
 and otherwise it is non-zero (recall the last comment after the introduction of the main 
 assumptions in Section \ref{ass}).  

\begin{proof}
The fact that $\tilde{\Pi}(F)$ is a non-degenerate, irreducible, critical $(d-1)$-type GW  
forest follows from Lemma 3 (i) in \cite{Gr}. Moreover, we deduce from 
this same lemma (see specifically equations (8) and (9) in \cite{Gr})
that the offspring distribution $\tilde{\bm{\mu}} = (\tilde{\mu}^{(1)}, \dots, \tilde{\mu}^{(d-1)})$ 
has Laplace exponents
\begin{eqnarray*}
 \tilde{\psi}^{(j)}(\mathbf{s}) = \psi^{(j)}(\mathbf{s}, \tilde{\psi}^{(d)}(\mathbf{s}) ),
\end{eqnarray*}

\noindent for $j \in [d-1]$ and $\mathbf{s} \in \mathbb{R}_{+}^{d-1}$, where 
$\tilde{\psi}^{(d)}$ is implicitly defined by
\begin{eqnarray*}
 \tilde{\psi}^{(d)}(\mathbf{s}) = \psi^{(d)}(\mathbf{s}, \tilde{\psi}^{(d)}(\mathbf{s})). 
\end{eqnarray*}

\noindent This is obtained by separating the offspring of each individual with 
types equal and different from $d$. 

In order to understand the behavior of $\tilde{\psi}^{(j)}$ close
to zero, we start by analyzing the one of $\tilde{\psi}^{(d)}$. 
In this direction, we observe from our main assumptions on 
the offspring distribution ${\bm \mu}$ that
\begin{eqnarray*}
 \tilde{\psi}^{(d)}(\mathbf{s}) & = & (1-m_{dd})\langle \tilde{\mathbf{m}}_{d}, \mathbf{s} \rangle + m_{dd}\tilde{\psi}^{(d)}(\mathbf{s}) +
  |(\mathbf{s}, \tilde{\psi}^{(d)}(\mathbf{s}))|^{\alpha_{d}} \Theta^{(d)} \left( \frac{(\mathbf{s}, \tilde{\psi}^{(d)}(\mathbf{s}))}{ |(\mathbf{s}, \tilde{\psi}^{(d)}(\mathbf{s}))|} \right)+o(|(\mathbf{s}, \tilde{\psi}^{(d)}(\mathbf{s}))|^{\alpha_{d}}) \\
  & = & \langle \tilde{\mathbf{m}}_{d}, \mathbf{s} \rangle + \frac{1}{1-m_{dd}}|(\mathbf{s}, \tilde{\psi}^{(d)}(\mathbf{s}))|^{\alpha_{d}} \Theta^{(d)} \left( \frac{(\mathbf{s}, \tilde{\psi}^{(d)}(\mathbf{s}))}{ |(\mathbf{s}, \tilde{\psi}^{(d)}(\mathbf{s}))|} \right)+o(|(\mathbf{s}, \tilde{\psi}^{(d)}(\mathbf{s}))|^{\alpha_{d}}),
  \end{eqnarray*}
  
 \noindent as $|\mathbf{s}| \downarrow 0$.  We also notice that
\begin{eqnarray} \label{eq1}
 \tilde{\psi}^{(d)}(\mathbf{s}) = \langle \tilde{\mathbf{m}}_{d}, \mathbf{s} \rangle + o(|\mathbf{s}| ), 
 \hspace*{6mm} \text{as} \, \, |\mathbf{s}| \downarrow 0.
\end{eqnarray}

On the one hand, from the above estimate, we know that
\begin{eqnarray*}
 \langle (\mathbf{s}, \tilde{\psi}^{(d)}(\mathbf{s})), \mathbf{y} \rangle = \left\langle \mathbf{s}, \tilde{\mathbf{y}} + y_{d} \tilde{\mathbf{m}}_{d} \right \rangle + y_{d} o(  |\mathbf{s}|  ),
\hspace*{6mm} \text{as} \, \, |\mathbf{s}| \downarrow 0,
 \end{eqnarray*}

\noindent Thus, 
 \begin{eqnarray*}
 |(\mathbf{s}, \tilde{\psi}^{(d)}(\mathbf{s}))|^{\alpha_{d}} \Theta^{(d)} \left( \frac{(\mathbf{s}, \tilde{\psi}^{(d)}(\mathbf{s}))}{ |(\mathbf{s}, \tilde{\psi}^{(d)}(\mathbf{s}))|} \right) & = &\int_{\mathbf{S}^{d}} |\langle (\mathbf{s}, \tilde{\psi}^{(d)}(\mathbf{s})), \mathbf{y} \rangle |^{\alpha_{d}} \lambda_{d}({\rm d} \mathbf{y}) \\
 & = & \int_{\mathbf{S}^{d}} \left| \left\langle \mathbf{s}, \tilde{\mathbf{y}} + y_{d} \tilde{\mathbf{m}}_{d} \right \rangle  \right|^{\alpha_{d}} \lambda_{d}({\rm d} \mathbf{y}) +o(|\mathbf{s}|^{\alpha_{d}}) 
 \end{eqnarray*}

On the other hand, from (\ref{eq1}), we have that
\begin{eqnarray*}
 \langle (\mathbf{s}, \tilde{\psi}^{(d)}(\mathbf{s})), (\mathbf{s}, \tilde{\psi}^{(d)}(\mathbf{s})) \rangle = \left\langle \mathbf{s}, \mathbf{s} \right \rangle  +  \langle \mathbf{s}, \tilde{\mathbf{m}}_{d} \rangle^{2} + o(  |\mathbf{s}|^{2}  ),
\hspace*{6mm} \text{as} \, \, |\mathbf{s}| \downarrow 0.
 \end{eqnarray*}

\noindent Then, the previous estimates yields to
\begin{eqnarray} \label{eq2}
 \tilde{\psi}^{(d)}(\mathbf{s}) = \langle \tilde{\mathbf{m}}_{d}, \mathbf{s} \rangle  +
 \frac{1}{1-m_{dd}} |\mathbf{s}|^{\alpha_{d}} \tilde{\Theta}^{(d)} \left( \mathbf{s}/ |\mathbf{s}| \right)+o(|\mathbf{s}|^{\alpha_{d}}),
\hspace*{6mm} |\mathbf{s}| \downarrow 0,
  \end{eqnarray}
  
\noindent where 
\begin{eqnarray*}
 \tilde{\Theta}^{(d)}(\mathbf{s}) = \int_{\mathbf{S}^{d}} \left| \left\langle \mathbf{s}, \tilde{\mathbf{y}} + y_{d} \tilde{\mathbf{m}}_{d} \right \rangle  \right|^{\alpha_{d}} \lambda_{d}({\rm d} \mathbf{y}),  
 \hspace*{6mm} \text{for} \, \,  \mathbf{s} \in \mathbb{R}_{+}^{d-1}.
\end{eqnarray*}

 \noindent Finally, from (\ref{eq1}), (\ref{eq2}) and our assumption on the 
 Laplace exponent $\psi^{(j)}$, the claim follows by similar computations.
\end{proof}

We notice that after performing the $d$- to $(d-1)$-type operation, we are 
left with a non-degenerate, irreducible, critical 
$(d-1)$-type GW forest whose offspring distribution $\tilde{{\bm \mu}}$ has mean matrix 
$\tilde{\mathbf{M}} = (\tilde{m}_{jk})_{j,k \in [d-1]}$. Lemma \ref{lemma1} shows that 
this matrix has spectral radius $1$ and moreover, it is not difficult to check that
its left and right $1$-eigenvectors
$\tilde{\mathbf{a}}$, $\tilde{\mathbf{b}}$ satisfying $\langle  \tilde{\mathbf{a}}, \mathbf{1} \rangle =
\langle \tilde{\mathbf{a}}, \tilde{\mathbf{b}} \rangle= 1$ are given by
\begin{eqnarray*}
 \tilde{\mathbf{a}} = \frac{1}{1-a_{d}} (a_{1}, \dots, a_{d}) \hspace*{5mm} \text{and}
 \hspace*{5mm} \tilde{\mathbf{b}} = \frac{1-a_{d}}{1-a_{d}b_{d}} (b_{1}, \dots, b_{d}). 
\end{eqnarray*}

We are now able to establish Proposition \ref{pro1}.

\begin{proof}[Proof of Proposition \ref{pro1}] The fact that $\Pi^{(i)}(F)$ is a monotype 
GW forest with critical non-degenerate offspring distribution is a consequence of 
Lemma \ref{lemma1} by following exactly the same argument as the proof 
of Proposition 4 (i) in \cite{Gr}. Roughly speaking, the idea is to remove the types
different from $i$ one by one through the $d$- to $(d-1)$-type operation, 
and noticing that the hypotheses of the GW forest under consideration are conserved 
at every step until we are left with a critical non-degenerate monotype GW forest. 
This immediately
shows by induction that the offspring distribution of $\Pi^{(i)}(F)$ is in the domain of attraction 
of a stable law of index $\underline{\alpha} = \min_{j \in [d]} \alpha_{j}$. Thus,
what only remains to be proved is the expression for the Laplace exponent of the offspring 
distribution. 

To this end, recall the notation of Proposition \ref{pro1}. Let 
\begin{eqnarray*}
 \tilde{{\bm \Theta}} (\mathbf{s}) = \left( \tilde{\Theta}^{(1)}(\mathbf{s}) \mathds{1}_{\{ \underline{\alpha} = \tilde{\alpha}_{1} \}}, \dots
 \tilde{\Theta}^{(d-1)} (\mathbf{s}) \mathds{1}_{\{ \underline{\alpha} = \tilde{\alpha}_{d-1} \}} \right),
\end{eqnarray*}

\noindent where $\mathbf{s} \in \mathbb{R}_{+}^{d-1}$. We first observe that 
for $j \in [d-1]$, we have
\begin{eqnarray*}
 \tilde{\Theta}^{(j)}(\tilde{\mathbf{b}}) & = &  \int_{\mathbf{S}^{d}}  |\langle \tilde{\mathbf{b}}, \tilde{\mathbf{y}} + y_{d} \tilde{\mathbf{m}}_{d}  \rangle|^{\tilde{\alpha}_{j}} \tilde{\lambda}_{j} ({\rm d} \mathbf{y}) \\
 & = & \int_{\mathbf{S}^{d}} \left| \langle \tilde{\mathbf{b}}, \tilde{\mathbf{y}} \rangle + y_{d} \langle  \tilde{\mathbf{b}}, \tilde{\mathbf{m}}_{d} \rangle \right|^{\tilde{\alpha}_{j}} \tilde{\lambda}_{j} ({\rm d} \mathbf{y}) \\
 & = & \left(\frac{1-a_{d}}{1-a_{d}b_{d}} \right)^{\tilde{\alpha}_{j}} \int_{\mathbf{S}^{d}} \left| \sum_{k=1}^{d-1} b_{k} y_{k} + y_{d}\sum_{k=1}^{d-1} b_{k} \frac{m_{dk}}{1-m_{dd}} \right|^{\tilde{\alpha}_{j}} \tilde{\lambda}_{j} ({\rm d} \mathbf{y}) \\
 & = & \left(\frac{1-a_{d}}{1-a_{d}b_{d}} \right)^{\tilde{\alpha}_{j}} \tilde{\Theta}^{(j)}(\mathbf{b}),
\end{eqnarray*}

\noindent where for the last equality, we use the fact the $\mathbf{b}$ is the 
right $1$-eigenvector of the mean matrix $\mathbf{M}$, that is, $\sum_{k \in [d]} b_{k} m_{dk} = b_{d}$.
Then, from the previous identity, we have that
\begin{eqnarray*}
 \langle \tilde{\mathbf{a}}, \tilde{{\bm \Theta}} (\tilde{\mathbf{b}}) \rangle 
 & = & \left(\frac{1-a_{d}}{1-a_{d}b_{d}}\right)^{\underline{\alpha}} 
 \left( \sum_{k=1}^{d-1} \tilde{a}_{k} \Theta^{(k)}(\mathbf{b}) \mathds{1}_{\{ \underline{\alpha} = \alpha_{k} \}}
 + \Theta^{(d)}(\mathbf{b}) \mathds{1}_{\{ \underline{\alpha} = \alpha_{d} \}} \sum_{k=1}^{d-1} \tilde{a}_{k} \frac{m_{kd}}{1-m_{dd}} \right) \\
 & = & \frac{(1-a_{d})^{\underline{\alpha}-1}}{(1-a_{d}b_{d})^{\underline{\alpha}}} \langle \mathbf{a}, {\bm \Theta} (\mathbf{b}) \rangle,
\end{eqnarray*}

\noindent where in the last equality, we now use that $\mathbf{a}$ is the 
left $1$-eigenvector of the mean matrix $\mathbf{M}$, i.e., 
$\sum_{k \in [d]} a_{k} m_{kd} = a_{d}$. Therefore, the expression for the Laplace
exponent readily follows by induction on the number of types, making use of
Lemma \ref{lemma1} and the above identity. 
\end{proof}

Following Miermont \cite{Gr}, we are interested in keeping 
the information of the number vertices that we delete 
during the projection $\Pi^{(i)}$. More precisely, for $\mathbf{f} \in \mathbb{F}^{(d)}$, 
recall that $\Pi^{(i)}(\mathbf{f})$ is the monotype forest obtained 
by removing all the vertices with type different from $i$. Then, for a vertex 
$u \in \Pi^{(i)}(\mathbf{f})$ with children $u1, \dots, uk$, we let 
$\mathbf{f}_{v_{u}}, \mathbf{f}_{v_{u1}}, \dots, \mathbf{f}_{v_{uk}}$ be the subtrees
of the original forest $\mathbf{f}$ rooted at $u, u1, \dots, uk$, respectively. Then, we let 
\begin{eqnarray*}
 N_{ij}(u) = \# \left \{ w \in \mathbf{f}_{v_{u}} \setminus \left(\bigcup_{r=1}^{k} \mathbf{f}_{v_{ur}} \right) : e_{\mathbf{f}}(w) = j \right\},
\hspace{6mm} \text{for} \, \, j \in [d] \setminus \{i\},
\end{eqnarray*}

\noindent be the number of type $j$ vertices that have been deleted between $u$ and its
children.  We also let
\begin{eqnarray*}
 \hat{N}_{ij}(n) = \# \left\{v \in \mathbf{f}_{n}: e_{\mathbf{f}}(v) = j \, \, \, \text{and} \, \, \, e_{\mathbf{f}}(w) \neq i \, \, \text{for all} \, \, w \vdash v \right\},
 \hspace{6mm} \text{for} \, \, j \in [d] \setminus \{i\}, 
\end{eqnarray*}

\noindent be the number of type $j$ vertices of the $n$-th tree component of $\mathbf{f}$ that lie below the
first layer of type  $i$ vertices, i.e. the number of type $j$ vertices of $\mathbf{f}_{n}$ that do
not have ancestors of type $i$. 

\definecolor{sqsqsq}{rgb}{0.12549019607843137,0.12549019607843137,0.12549019607843137}
\definecolor{qqwuqq}{rgb}{0.,0.39215686274509803,0.}
\definecolor{qqqqff}{rgb}{0.,0.,1.}
\definecolor{ffqqqq}{rgb}{1.,0.,0.}

\begin{figure}[H]\centering 
\begin{minipage}[b]{10\linewidth}
\begin{tikzpicture}[line cap=round,line join=round,>=triangle 45,x=0.8cm,y=0.8cm]
\clip(-2.5664583333333333,-0.5770833333333383) rectangle (11.339791666666667,6.5322916666666995);
\draw (0.8,3.)-- (1.48,2.);
\draw (1.46,3.)-- (1.48,2.);
\draw (2.2,2.98)-- (1.48,2.);
\draw (1.48,2.)-- (3.,1.);
\draw (2.5,2.)-- (3.,1.);
\draw (3.5,2.)-- (3.,1.);
\draw (3.,1.)-- (4.52,1.98);
\draw (4.5,2.98)-- (4.52,1.98);
\draw (3.82,3.)-- (4.52,1.98);
\draw (5.26,3.)-- (4.52,1.98);
\draw (7.,2.)-- (8.,1.);
\draw (8.,2.)-- (8.,1.);
\draw (9.,2.)-- (8.,1.);
\draw (8.26,2.98)-- (9.,2.);
\draw (9.,3.)-- (9.,2.);
\draw (9.8,3.02)-- (9.,2.);
\draw [color=sqsqsq] (8.22,4.02)-- (8.26,2.98);
\draw [color=sqsqsq] (8.7,4.02)-- (9.,3.);
\draw [color=sqsqsq] (9.32,4.)-- (9.,3.);
\draw [color=sqsqsq] (3.76,4.)-- (3.82,3.);
\draw [color=sqsqsq] (2.7,4.02)-- (2.2,2.98);
\draw [color=sqsqsq] (2.,4.)-- (2.2,2.98);
\draw [color=sqsqsq] (5.28,3.98)-- (5.26,3.);
\draw [color=sqsqsq] (0.78,3.98)-- (0.8,3.);
\draw (8.324166666666667,1.0635416666666715) node[anchor=north west] {$\mathbf{f}_{2}$};
\draw (3.6366666666666667,1.2979166666666726) node[anchor=north west] {$\mathbf{f}_{1}$};
\draw (-1.5039583333333333,5.438541666666695) node[anchor=north west] {$\mathbf{f}:$};
\draw [color=sqsqsq] (4.48752501031551,4.025564518714591)-- (4.5,2.98);
\draw (-2.3789583333333333,2.7822916666666804) node[anchor=north west] {\parbox{2.1625 cm}{\footnotesize{$N_{12}(u(3)) = 4 \\ N_{13}(u(3)) = 1$}}};
\draw (1.5116666666666667,0.422916666666668) node[anchor=north west] {\footnotesize{$\hat{N}_{12}(1) = \hat{N}_{13}(1) =0$}};
\draw (9.042916666666667,1.6260416666666744) node[anchor=north west] {\parbox{2.3125 cm}{\footnotesize{$\hat{N}_{12}(2) = 2 \\ \hat{N}_{13}(2) =0$}}};
\draw [rotate around={-121.94445734456568:(1.955056668799645,3.061524592635171)},line width=1.2pt] (1.955056668799645,3.061524592635171) ellipse (1.6755809554064574cm and 0.6242281294689868cm);
\draw [rotate around={120.45522368931277:(7.541912434733292,1.4050132330918619)},line width=1.2pt] (7.541912434733292,1.4050132330918619) ellipse (0.9223040465633588cm and 0.3740044787167834cm);
\begin{scriptsize}
\draw [fill=ffqqqq] (3.,1.) circle (2.5pt);
\draw[color=ffqqqq] (2.7304166666666667,0.9385416666666707) node {1};
\draw [fill=ffqqqq] (2.5,2.) circle (2.5pt);
\draw[color=ffqqqq] (2.2772916666666667,1.907291666666676) node {9};
\draw [fill=qqqqff,shift={(1.48,2.)}] (0,0) ++(0 pt,3.75pt) -- ++(3.2475952641916446pt,-5.625pt)--++(-6.495190528383289pt,0 pt) -- ++(3.2475952641916446pt,5.625pt);
\draw[color=qqqqff] (1.1991666666666667,1.8760416666666757) node {2};
\draw [fill=qqwuqq] (4.52,1.98) ++(-2.5pt,0 pt) -- ++(2.5pt,2.5pt)--++(2.5pt,-2.5pt)--++(-2.5pt,-2.5pt)--++(-2.5pt,2.5pt);
\draw[color=qqwuqq] (4.136666666666667,2.0479166666666764) node {11};
\draw [fill=ffqqqq] (3.5,2.) circle (2.5pt);
\draw[color=ffqqqq] (3.1679166666666667,1.907291666666676) node {10};
\draw [fill=ffqqqq] (5.26,3.) circle (2.5pt);
\draw[color=ffqqqq] (5.496041666666667,3.2979166666666835) node {16};
\draw [fill=qqwuqq] (4.5,2.98) ++(-2.5pt,0 pt) -- ++(2.5pt,2.5pt)--++(2.5pt,-2.5pt)--++(-2.5pt,-2.5pt)--++(-2.5pt,2.5pt);
\draw[color=qqwuqq] (4.714791666666667,3.2666666666666835) node {14};
\draw [fill=qqqqff,shift={(3.82,3.)}] (0,0) ++(0 pt,3.75pt) -- ++(3.2475952641916446pt,-5.625pt)--++(-6.495190528383289pt,0 pt) -- ++(3.2475952641916446pt,5.625pt);
\draw[color=qqqqff] (3.9804166666666667,3.2822916666666835) node {12};
\draw [fill=qqwuqq] (2.2,2.98) ++(-2.5pt,0 pt) -- ++(2.5pt,2.5pt)--++(2.5pt,-2.5pt)--++(-2.5pt,-2.5pt)--++(-2.5pt,2.5pt);
\draw[color=qqwuqq] (1.9179166666666667,2.9385416666666817) node {6};
\draw [fill=ffqqqq] (0.8,3.) circle (2.5pt);
\draw[color=ffqqqq] (0.5272916666666667,2.907291666666681) node {3};
\draw [fill=qqqqff,shift={(1.46,3.)}] (0,0) ++(0 pt,3.75pt) -- ++(3.2475952641916446pt,-5.625pt)--++(-6.495190528383289pt,0 pt) -- ++(3.2475952641916446pt,5.625pt);
\draw[color=qqqqff] (1.2460416666666667,2.891666666666681) node {5};
\draw [fill=qqqqff,shift={(8.,1.)}] (0,0) ++(0 pt,3.75pt) -- ++(3.2475952641916446pt,-5.625pt)--++(-6.495190528383289pt,0 pt) -- ++(3.2475952641916446pt,5.625pt);
\draw[color=qqqqff] (7.699166666666667,0.9854166666666709) node {18};
\draw [fill=ffqqqq] (9.,2.) circle (2.5pt);
\draw[color=ffqqqq] (9.339791666666667,2.1104166666666773) node {21};
\draw [fill=ffqqqq] (8.,2.) circle (2.5pt);
\draw[color=ffqqqq] (8.261666666666667,2.1572916666666773) node {20};
\draw [fill=qqqqff,shift={(7.,2.)}] (0,0) ++(0 pt,3.75pt) -- ++(3.2475952641916446pt,-5.625pt)--++(-6.495190528383289pt,0 pt) -- ++(3.2475952641916446pt,5.625pt);
\draw[color=qqqqff] (7.230416666666667,2.1416666666666773) node {19};
\draw [fill=qqwuqq] (8.26,2.98) ++(-2.5pt,0 pt) -- ++(2.5pt,2.5pt)--++(2.5pt,-2.5pt)--++(-2.5pt,-2.5pt)--++(-2.5pt,2.5pt);
\draw[color=qqwuqq] (8.464791666666667,3.2354166666666826) node {22};
\draw [fill=qqwuqq] (9.,3.) ++(-2.5pt,0 pt) -- ++(2.5pt,2.5pt)--++(2.5pt,-2.5pt)--++(-2.5pt,-2.5pt)--++(-2.5pt,2.5pt);
\draw[color=qqwuqq] (9.277291666666667,3.2354166666666826) node {24};
\draw [fill=qqqqff,shift={(9.8,3.02)}] (0,0) ++(0 pt,3.75pt) -- ++(3.2475952641916446pt,-5.625pt)--++(-6.495190528383289pt,0 pt) -- ++(3.2475952641916446pt,5.625pt);
\draw[color=qqqqff] (9.996041666666667,3.2510416666666835) node {S};
\draw [fill=ffqqqq] (5.28,3.98) circle (2.5pt);
\draw[color=ffqqqq] (5.589791666666667,3.954166666666687) node {17};
\draw [fill=qqqqff,shift={(2.,4.)}] (0,0) ++(0 pt,3.75pt) -- ++(3.2475952641916446pt,-5.625pt)--++(-6.495190528383289pt,0 pt) -- ++(3.2475952641916446pt,5.625pt);
\draw[color=qqqqff] (2.2460416666666667,3.860416666666686) node {7};
\draw [fill=qqqqff,shift={(2.7,4.02)}] (0,0) ++(0 pt,3.75pt) -- ++(3.2475952641916446pt,-5.625pt)--++(-6.495190528383289pt,0 pt) -- ++(3.2475952641916446pt,5.625pt);
\draw[color=qqqqff] (2.9179166666666667,3.860416666666686) node {8};
\draw [fill=qqwuqq] (3.76,4.) ++(-2.5pt,0 pt) -- ++(2.5pt,2.5pt)--++(2.5pt,-2.5pt)--++(-2.5pt,-2.5pt)--++(-2.5pt,2.5pt);
\draw[color=qqwuqq] (3.9960416666666667,3.922916666666687) node {13};
\draw [fill=ffqqqq] (8.22,4.02) circle (2.5pt);
\draw[color=ffqqqq] (8.386666666666667,4.313541666666689) node {23};
\draw [fill=ffqqqq] (8.7,4.02) circle (2.5pt);
\draw[color=ffqqqq] (8.886666666666667,4.297916666666689) node {25};
\draw [fill=qqwuqq] (9.32,4.) ++(-2.5pt,0 pt) -- ++(2.5pt,2.5pt)--++(2.5pt,-2.5pt)--++(-2.5pt,-2.5pt)--++(-2.5pt,2.5pt);
\draw[color=qqwuqq] (9.511666666666667,4.282291666666689) node {26};
\draw [fill=ffqqqq] (0.78,3.98) circle (2.5pt);
\draw[color=ffqqqq] (0.9960416666666667,3.860416666666686) node {4};
\draw [fill=qqwuqq] (4.48752501031551,4.025564518714591) ++(-2.5pt,0 pt) -- ++(2.5pt,2.5pt)--++(2.5pt,-2.5pt)--++(-2.5pt,-2.5pt)--++(-2.5pt,2.5pt);
\draw[color=qqwuqq] (4.714791666666667,3.922916666666687) node {15};
\draw [fill=black] (6.841710221003097,2.204045190814083) ++(-0.5pt,0 pt) -- ++(0.5pt,0.5pt)--++(0.5pt,-0.5pt)--++(-0.5pt,-0.5pt)--++(-0.5pt,0.5pt);
\draw [fill=black] (1.3702051765544383,3.5948467523918044) ++(-0.5pt,0 pt) -- ++(0.5pt,0.5pt)--++(0.5pt,-0.5pt)--++(-0.5pt,-0.5pt)--++(-0.5pt,0.5pt);
\draw [fill=black] (2.577604334407634,2.5861335319068557) ++(-0.5pt,0 pt) -- ++(0.5pt,0.5pt)--++(0.5pt,-0.5pt)--++(-0.5pt,-0.5pt)--++(-0.5pt,0.5pt);
\draw [fill=black] (3.21951274744351,4.603559972876753) ++(-0.5pt,0 pt) -- ++(0.5pt,0.5pt)--++(0.5pt,-0.5pt)--++(-0.5pt,-0.5pt)--++(-0.5pt,0.5pt);
\draw [fill=black] (0.6671626289437169,1.8372403833649997) ++(-0.5pt,0 pt) -- ++(0.5pt,0.5pt)--++(0.5pt,-0.5pt)--++(-0.5pt,-0.5pt)--++(-0.5pt,0.5pt);
\draw [fill=black] (1.217369840117325,1.27174963854768) ++(-0.5pt,0 pt) -- ++(0.5pt,0.5pt)--++(0.5pt,-0.5pt)--++(-0.5pt,-0.5pt)--++(-0.5pt,0.5pt);
\draw [fill=black] (8.24779531622454,0.6451247591555131) ++(-0.5pt,0 pt) -- ++(0.5pt,0.5pt)--++(0.5pt,-0.5pt)--++(-0.5pt,-0.5pt)--++(-0.5pt,0.5pt);
\draw [fill=black] (7.177947961164746,1.1036307684668536) ++(-0.5pt,0 pt) -- ++(0.5pt,0.5pt)--++(0.5pt,-0.5pt)--++(-0.5pt,-0.5pt)--++(-0.5pt,0.5pt);
\draw [fill=black] (7.636453970476086,2.0664933880206684) ++(-0.5pt,0 pt) -- ++(0.5pt,0.5pt)--++(0.5pt,-0.5pt)--++(-0.5pt,-0.5pt)--++(-0.5pt,0.5pt);
\draw [fill=black] (7.880990508775467,1.74553918150273) ++(-0.5pt,0 pt) -- ++(0.5pt,0.5pt)--++(0.5pt,-0.5pt)--++(-0.5pt,-0.5pt)--++(-0.5pt,0.5pt);
\end{scriptsize}
\end{tikzpicture}
\end{minipage}
\caption{{\small A representation of the quantities $N_{1j}$ and $\hat{N}_{2j}$,  
for a three-type planar forest with two tree components, type $1$ vertices 
represented with circles, type $2$ vertices with triangles and type $3$ vertices 
with diamonds.}} \label{fig2}
\end{figure}
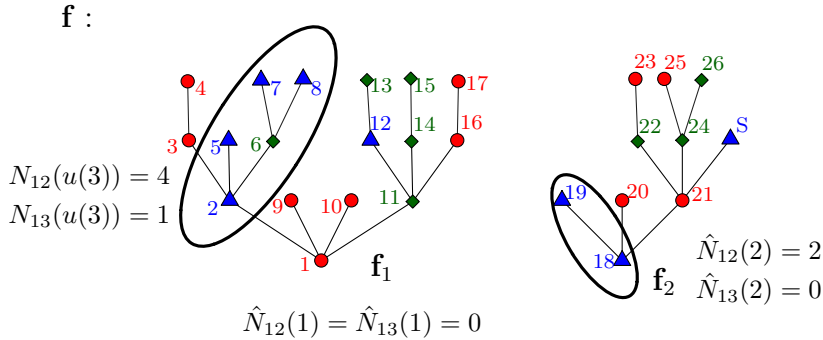

The following proposition provides information about the distribution of the previous
quantities.  

\begin{proposition} \label{pro2}
Let $1=u(0) \prec  u(1) \prec \dots \prec u(\# \Pi^{(i)}(\mathbf{f})-1)$
be the list of vertices of $\Pi^{(i)}(\mathbf{f})$ in depth-first order and let 
$ \mathbf{x} \in [d]^{\mathbb{N}}$. Then, under the law $ \mathbf{P}^{\mathbf{x}}$ and 
for each $ i \in [d]$:

\begin{itemize}
  \item[(i)] For every $j \in [d] \setminus \{i\}$, the random variables $(N_{ij}(u(n)), n \geq 0)$ are i.i.d. 
  Moreover, their Laplace exponents satisfy
\begin{eqnarray*}
 \phi_{ij}(s) := - \log \mathbf{E}^{\mathbf{x}} \left[\exp \left(-s N_{ij}(u(0)) \right) \right] = \frac{a_{j}}{a_{i}} s + 
 c_{ij}s^{\underline{\alpha}} + o(s^{\underline{\alpha}}), \hspace*{6mm} \text{as} \, \, s \downarrow 0,
\end{eqnarray*}

\noindent where $s \in \mathbb{R}_{+}$, $\underline{\alpha} = \min_{j \in [d]} \alpha_{j}$ and
$c_{ij} > 0$ a constant. In particular, 
$\mathbf{E}^{\mathbf{x}}[N_{ij}(u(0))] = a_{j}/a_{i}$. 

 \item[(ii)] For every $j \in [d] \setminus \{i\}$, the random variables $(\hat{N}_{ij}(n), n \geq 1)$ 
 are independent, and their Laplace exponents satisfy
 \begin{eqnarray*}
 \hat{\phi}_{ij}(s) := - \log \mathbf{E}^{\mathbf{x}} \left[\exp \left(-s \hat{N}_{ij}(n) \right) \right] = \left( \hat{c}_{ij} s + 
 \hat{c}_{ij}^{\prime}s^{\hat{\alpha}_{i}} + o(s^{\hat{\alpha}_{i}}) \right) \mathds{1}_{\{x_{n} \neq i \}}, \hspace*{6mm} \text{as} \, \, s \downarrow 0,
\end{eqnarray*}

\noindent for $s \in \mathbb{R}_{+}$, some constants $\hat{c}_{ij} > 0$ and $\hat{c}_{ij}^{\prime} \geq 0$ (that depends of $x_{n}$) 
and where $\hat{\alpha}_{i} = \min_{j \in [d] \setminus \{i\}} \alpha_{j}$.
\end{itemize}
\end{proposition}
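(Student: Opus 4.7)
The proof separates naturally into an independence argument and an asymptotic expansion of the Laplace exponents. For the independence in (i), note that for each $n \geq 0$ the block $B_n := \mathbf{f}_{v_{u(n)}} \setminus \bigcup_{r} \mathbf{f}_{v_{u(n)r}}$ consists of the non-type-$i$ descendants of the type-$i$ vertex $v_{u(n)}$ reached before encountering another type-$i$ vertex, and $N_{ij}(u(n))$ is the number of type-$j$ vertices of $B_n$. Exactly as in the reasoning behind Proposition \ref{pro1}, the branching property of the multitype GW forest, applied at the type-$i$ vertices $v_{u(n)}$, makes the decorated offspring — the joint data of the projected offspring and the counts $(N_{ik}(u(n)))_{k \neq i}$ — an i.i.d.\ family (all roots of the blocks being of type $i$). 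The i.i.d.\ property for $(N_{ij}(u(n)), n \geq 0)$ follows. For (ii), each $\hat N_{ij}(n)$ is a measurable function of $F_n$ alone, so the independence of the tree components yields independence of $(\hat N_{ij}(n), n \geq 1)$.

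For the Laplace exponents, I would set up a self-consistent system analogous to the one used in the proof of Lemma \ref{lemma1}. Fix $i \in [d]$ and, for $\mathbf{s} = (s_1, \dots, s_d) \in \mathbb{R}_+^d$, let $g^{(k)}(\mathbf{s})$, $k \in [d] \setminus \{i\}$, be the smallest non-negative solution of
\begin{equation*}
 g^{(k)}(\mathbf{s}) = s_k + \psi^{(k)}\bigl(\tilde{\mathbf{s}}(\mathbf{s})\bigr), \qquad \tilde s_\ell(\mathbf{s}) = s_i \mathds{1}_{\{\ell = i\}} + g^{(\ell)}(\mathbf{s})\mathds{1}_{\{\ell \neq i\}}.
\end{equation*}
Probabilistically, $g^{(k)}(\mathbf{s})$ is the joint Laplace exponent — with weight $s_\ell$ on each deleted type-$\ell$ vertex ($\ell \neq i$) and weight $s_i$ on each projected type-$i$ child — of the contribution of a type-$k$ subtree attached to a type-$i$ vertex. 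Specializing the weight vector to $\mathbf{s}^{\ast} = s \mathbf{e}_j$ (the $j$-th standard basis vector) gives $\phi_{ij}(s) = \psi^{(i)}\bigl(\tilde{\mathbf{s}}(s\mathbf{e}_j)\bigr)$ and, when $x_n \neq i$, $\hat\phi_{ij}(s) = g^{(x_n)}(s\mathbf{e}_j)$.

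It remains to extract the announced expansions. Linearizing the system at $\mathbf{s} = \mathbf{0}$ and using $\mathbf{a}\mathbf{M} = \mathbf{a}$ identifies $\partial_{s_j} g^{(k)}(\mathbf{0})$ with the expected number of type-$j$ vertices produced by a type-$k$ offshoot before returning to type $i$; summation against $\mathbf{a}$ yields the leading mean $a_j/a_i$ for $\phi_{ij}$. Substituting the linear approximation into the nonlinear parts $|\tilde{\mathbf{s}}|^{\alpha_k}\Theta^{(k)}(\tilde{\mathbf{s}}/|\tilde{\mathbf{s}}|)$ of each $\psi^{(k)}$, and using that $\Theta^{(k)} \equiv 0$ for $k \notin \Delta$ by ($\mathbf{H}_2.\mathbf{2}$), isolates the stable correction. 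For $\phi_{ij}$ the relevant index is $\underline{\alpha}$: either the root vertex of type $i$ contributes through $\psi^{(i)}$ when $\alpha_i = \underline{\alpha}$, or deeper types in $\Delta$ with $\alpha_k = \underline{\alpha}$ enter via the composition $\tilde{\mathbf{s}}$; the resulting coefficient $c_{ij}$ is strictly positive by irreducibility, which forces at least one such contribution. For $\hat\phi_{ij}$ only the exponents $\psi^{(k)}$ with $k \neq i$ appear, so the relevant index is $\hat\alpha_i = \min_{\ell \in [d]\setminus\{i\}} \alpha_\ell$, and $\hat c_{ij}' > 0$ exactly when the type-$x_n$ offshoot can reach a type in $\Delta$ without first hitting type $i$; otherwise $\hat c_{ij}' = 0$.

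The main technical obstacle is controlling the expansion through the coupled fixed point system without introducing intermediate orders strictly between $s$ and $s^{\underline{\alpha}}$, and without producing cancellations at the stable order. I would handle this exactly as in the proof of Proposition \ref{pro1}, by induction on the number of types, iterating the $d$- to $(d-1)$-type operation of Lemma \ref{lemma1} while carrying the extra coordinate $s_j$ as a marker on the type being tracked. At each step the announced form of the Laplace exponent is preserved with updated stable measure (the computation is exactly that performed in Lemma \ref{lemma1}, now with an additional passive variable), and after $d-1$ iterations the expansions for $\phi_{ij}$ and $\hat\phi_{ij}$ drop out, with constants $c_{ij}, \hat c_{ij}, \hat c_{ij}'$ expressed in terms of the mean matrix $\mathbf{M}$, the Perron-Frobenius eigenvectors $\mathbf{a}, \mathbf{b}$, and the measures $\lambda_k$.
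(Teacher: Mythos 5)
Your proposal is correct and ultimately follows the paper's approach: after introducing the (correct) coupled fixed-point system for $(g^{(k)})_{k\neq i}$ and the specializations $\phi_{ij}(s)=\psi^{(i)}(\tilde{\mathbf{s}}(s\mathbf{e}_j))$ and $\hat{\phi}_{ij}(s)=g^{(x_n)}(s\mathbf{e}_j)$, you explicitly defer the asymptotic expansion to the inductive $d$- to $(d-1)$-type removal of Lemma~\ref{lemma1}, which is exactly what the paper does (via the scalar recursion $\tilde{\phi}^{(d)}(s)=s+\psi^{(d)}(\mathbf{0},\tilde{\phi}^{(d)}(s))$ and its iteration). Two minor points are glossed over: the i.i.d.\ claim in (i) requires the branching property along a \emph{random stopping line} (the first type-$i$ layer), not at a deterministic vertex set, and the paper handles this through Jagers' stopping-line theorem, cited via Proposition~4(ii) of \cite{Gr}; and the sign conditions $c_{ij}>0$ and $\hat c_{ij}'\ge 0$ (with $\hat c_{ij}'=0$ possible) require tracking through Lemma~\ref{lemma1} which measures $\tilde\lambda_j$ remain nonzero after each type removal — a propagation governed jointly by irreducibility and $\Delta\neq\varnothing$ — whereas your one-line appeal to irreducibility is correct in spirit but does not carry out that bookkeeping.
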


\begin{proof}
 \item[(i)] The fact that for every $j \in [d] \setminus \{i\}$, the random variables $(N_{ij}(u(n)), n \geq 0)$ are i.i.d.
  has been proven in Proposition 4 (ii) of \cite{Gr}. Basically, this follows
  from Jagers' theorem on stopping lines \cite{Ja}. We then focus on the second part of the statement, and for simplicity, we 
prove this in the case $i=1$, without losing generality. The idea is based in a 
similar induction argument as in the proof of Proposition \ref{pro1}, 
by making use of the $d$- to $(d-1)$-type operation $\tilde{\Pi}$. In this direction, 
for $\mathbf{f} \in \mathbb{F}^{(d)}$ and $u \in \tilde{\Pi}(\mathbf{f})$,
we let $\tilde{N}(u)$ be the number of $d$-type vertices that have been deleted between
$u$ and its children during this procedure. For $j \in [d-1]$,
we let $u^{(j)}(0) \prec u^{(j)}(1) \prec \dots$ be the type $j$ vertices of
$F$ arranged in depth-first order. Then, Lemma 3 (ii) in \cite{Gr} ensures that under
$\mathbf{P}^{\mathbf{x}}$, the $d-1$ sequences $(\tilde{N}(u^{(j)}(n)), n \geq 0)$
are independent and formed of i.i.d. elements. Further, their Laplace exponents
$\tilde{\phi}^{(j)}$ respectively satisfy
\begin{eqnarray*}
 \tilde{\phi}^{(j)}(s) = \psi^{(j)}(\mathbf{0},\tilde{\phi}^{(d)}(s))
\end{eqnarray*}

\noindent for $s \in \mathbb{R}_{+}$, $\mathbf{0}$ the vector
of $\mathbb{R}_{+}^{d-1}$ with all components equal to $0$, and where 
$\tilde{\phi}^{(d)}$ is implicitly given by 
\begin{eqnarray} \label{eq3}
 \tilde{\phi}^{(d)}(s) = s + \psi^{(d)}(\mathbf{0},\tilde{\phi}^{(d)}(s)).
\end{eqnarray}
 
\noindent Thus, from our main assumptions on the offspring distribution, it 
is not difficult to check by following the same reasoning as the proof
of Lemma \ref{lemma1} that
\begin{eqnarray*}
 \tilde{\phi}^{(j)}(s) = \frac{m_{jd}}{1-m_{dd}} s + 
 \tilde{c}_{jd} s^{\tilde{\alpha}_{j}} + o(s^{\tilde{\alpha}_{j}}), \hspace*{6mm} \text{as} \, \, s \downarrow 0,
\end{eqnarray*}

\noindent where $\tilde{\alpha}_{j} = \min(\alpha_{j}, \alpha_{d})$
and the constant $\tilde{c}_{jd} = 0$ if $j,d \in [d] \setminus \Delta$ and $\tilde{c}_{jd} > 0$ 
otherwise (recall the main assumptions ($\mathbf{H}_{2}.\mathbf{1}$) and ($\mathbf{H}_{2}.\mathbf{2}$)). 

Let now proceed to prove our statement. In the monotype case, $d =1$, 
there is nothing to show. For the case $d = 2$, one checks 
from the previous discussion that the Laplace exponent of $N_{12}(u(0))$ satisfies
\begin{eqnarray*}
 \phi_{12}(s) = \frac{m_{12}}{1-m_{22}} s + 
 \tilde{c}_{12}s^{\tilde{\alpha}_{1}} + o(s^{\tilde{\alpha}_{1}}), \hspace*{6mm} \text{as} \, \, s \downarrow 0.
\end{eqnarray*}

\noindent On the other hand, a simple computation shows that $m_{12}/(1-m_{22}) = 
a_{2}/a_{1}$. 

We now consider case $d \geq 3$. We apply the operation $\tilde{\Pi}$, 
$d-2$ times, removing the types $d, d-1, \dots, 3$ one after the other.
We then obtain a two-type GW forest and we observe that the 
number of type $2$ vertices that have only the root as type $1$ ancestor is precisely the number of 
type $2$ individuals that are trapped between two generations of $\Pi^{(1)}(F)$. Therefore, 
in view of the $d=2$ case above, it is not difficult to see that the Laplace exponent of 
$N_{12}(u(0))$ satisfies
\begin{eqnarray*}
 \phi_{12}(s) = \frac{a_{2}}{a_{1}} s + 
 c_{12}s^{\underline{\alpha}} + o(s^{\underline{\alpha}}), \hspace*{6mm} \text{as} \, \, s \downarrow 0,
\end{eqnarray*}

\noindent for some constant $c_{12} > 0$. Finally, our claim follows by symmetry.

\item[(ii)] This is obtained by a similar induction argument. We only need
to notice that for $i \in [d]$ and $j \in [d] \setminus \{i\}$, 
$\hat{N}_{ij}(n) = 0$ when $x_{n} = i$.

\end{proof}

\subsection{Sub-exponential Bounds} \label{subs2}

The following lemma gives an exponential control on 
the height and number of components related to the $n$ first vertices in $d$-type
GW forests. This extends Lemma 4 in \cite{Gr} which considers 
the finite variance case. Recall that for a forest $\mathbf{f} \in \mathbb{F}$, 
we let $1 \prec u_{\mathbf{f}}(0) \prec u_{\mathbf{f}}(1) \prec \cdots \prec
u_{\mathbf{f}}(\# \mathbf{f}-1)$ be the depth-first ordered list of its vertices. Recall
also that $\Upsilon_{n}^{\mathbf{f}}$ is the index of the tree component to with 
$u_{\mathbf{f}}(n)$ belongs.

\begin{lemma} \label{lemma2}
  There exist two constants $0 < C_{1}, C_{2} < \infty$ (depending only on 
 ${\bm \mu}$) such that for every $n \in \mathbb{N}$, $\mathbf{x} \in [d]^{\mathbb{N}}$ and
 $\eta > 0$, 
 \begin{eqnarray*} 
  \mathbf{P}^{\mathbf{x}}\left( \max_{0\leq k \leq n } |u_{F}(k)| \geq n^{1 -1/\underline{\alpha} + \eta} \right) \leq C_{1}(n+1) \exp \left(-C_{2}n^{\eta} \right)
 \end{eqnarray*}
 
 \noindent and
 \begin{eqnarray*} 
  \mathbf{P}^{\mathbf{x}}\left( \Upsilon_{n}^{F} \geq n^{1 -1/\underline{\alpha} +\eta}\right) \leq C_{1} \exp \left(-C_{2}n^{\eta} \right).
 \end{eqnarray*}
 \end{lemma}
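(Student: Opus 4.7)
The argument adapts Lemma 4 of \cite{Gr} to the infinite-variance setting. The overall strategy is to reduce the multitype statements to monotype ones via the projection $\Pi^{(i)}$ furnished by Proposition \ref{pro1}, and then to handle the monotype case by a truncation argument that compensates for the absence of exponential moments on the offspring.

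For the height bound, we first observe that for every vertex $v \in F$ the word length decomposes according to ancestor types as
\[
|v|_F = 1 + \sum_{i=1}^{d} \#\{w \vdash v : e_F(w) = i\},
\]
where the number of type-$i$ strict ancestors of $v$ equals the word length, in $\Pi^{(i)}(F)$, of the deepest type-$i$ ancestor of $v$. Since this ancestor is visited in $\Pi^{(i)}(F)$ no later than $v$ is visited in $F$, one deduces the pointwise bound
\[
\max_{0 \le k \le n} |u_F(k)| \;\le\; 1 + d + \sum_{i=1}^{d} \max_{0 \le k \le n} H_k^{\Pi^{(i)}(F)}.
\]
By Proposition \ref{pro1}, each $\Pi^{(i)}(F)$ is a monotype critical GW forest with offspring in the domain of attraction of an $\underline{\alpha}$-stable law, so the problem reduces to a monotype sub-exponential height bound, which I would attack via the Łukasiewicz walk representation $H_k = \#\{j < k : W_j = \min_{j \le \ell \le k} W_\ell\}$. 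For the $\Upsilon$ bound, the event $\{\Upsilon_n^F \ge m\}$ is equivalent to $\{\#T_1 + \dots + \#T_{m-1} \le n\}$ where $T_1, T_2, \ldots$ are the independent multitype tree components of $F$; combining Proposition \ref{pro1} with the monotype total-progeny tail $\mathbf{P}(\#\Pi^{(i)}(T)\ge \ell) = O(\ell^{-1/\underline{\alpha}})$ transfers the appropriate tail estimate to $\#T_k$ itself, so the claim reduces to a lower-deviation bound for a sum of i.i.d.\ positive random variables with stable-type tails.

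In both cases the core analytic task is to obtain stretched-exponential tail bounds for functionals (the running minimum, or the number of ascending ladder epochs) of a random walk whose i.i.d.\ steps lie in the domain of attraction of an $\underline{\alpha}$-stable law. The new technical point compared with \cite{Gr} is that no exponential moments on the steps are available. I would truncate the offspring at a level $K_n = n^{1/\underline{\alpha}} L(n)$ with $L(n)$ a slowly growing function: on the "no big jump" event -- whose complement is controlled by $n\,\mu([K_n,\infty)) = O(L(n)^{-\underline{\alpha}})$ via $(\mathbf{H}_2.\mathbf{1})$ -- the truncated Łukasiewicz walk has bounded increments and admits classical Chernoff/Bennett-type exponential bounds, while the complementary big-jump contribution is absorbed into the prefactor $C_1(n+1)$.

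The principal obstacle is precisely this calibration: $L(n)$ must be chosen large enough so that Chernoff on the truncated walk yields the claimed rate $\exp(-C_2 n^{\eta})$ at the target threshold $n^{1-1/\underline{\alpha}+\eta}$, yet small enough so that the probability of any step exceeding $K_n$ remains of the same order; moreover, the balance must be achieved uniformly in $\underline{\alpha} \in (1,2]$. In the finite-variance setting of \cite{Gr} no such calibration is needed because direct exponential moments on the offspring yield Chernoff bounds in one step, and this is the main departure from Miermont's argument.
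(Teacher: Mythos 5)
Your multitype-to-monotype reduction via $\Pi^{(i)}$ and Proposition \ref{pro1} is the same as the paper's, and your treatment of $\Upsilon^F_n$ is nearly on target: you correctly identify that $\{\Upsilon^F_n\ge m\}=\{\#F_1+\dots+\#F_{m-1}\le n\}$ reduces matters to a \emph{lower}-deviation estimate for a sum of i.i.d.\ nonnegative random variables with stable-type tails. But your proposed core engine for the height bound --- truncate the {\L}ukasiewicz walk at $K_n = n^{1/\underline\alpha}L(n)$, apply Chernoff/Bennett on the bounded-increment event, and ``absorb'' the big-jump event into $C_1(n+1)$ --- does not give the claimed rate. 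By $(\mathbf{H}_2.\mathbf{1})$ the offspring tail is regularly varying, so $n\,\mu([K_n,\infty))\asymp L(n)^{-\underline\alpha}$, which for slowly growing $L$ decays far more slowly than $C_1(n+1)\exp(-C_2n^\eta)$. The big-jump event cannot be absorbed into the prefactor, and calibrating $L$ large enough to make that term stretched-exponentially small would destroy the truncation.

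The way out, which is what the paper does, is to avoid truncation altogether by dualizing \emph{both} estimates into lower-deviation bounds for sums of nonnegative random variables, where the Laplace transform $\mathbb{E}[e^{-\lambda X}]$ is automatically finite for all $\lambda\ge 0$ regardless of heavy tails. For the height, $|u_F(k)|-1$ is the number of weak records up to time $k$ of the associated left-continuous random walk, so $\{|u_F(k)|-1\ge m\}=\{\tau_1+(\tau_2-\tau_1)+\dots+(\tau_m-\tau_{m-1})\le k\}$ with $(\tau_j-\tau_{j-1})$ i.i.d.\ nonnegative ladder-epoch increments; by Feller and Doney their Laplace exponent satisfies $\tilde\kappa(\lambda)=\tilde C_1\lambda^{1-1/\alpha}+o(\lambda^{1-1/\alpha})$ as $\lambda\downarrow 0$, and the elementary bound $\mathbb{P}(\sum_{j\le m}(\tau_j-\tau_{j-1})\le k)\le e\,\mathbb{E}[e^{-\sum(\tau_j-\tau_{j-1})/k}]\le\exp(1-m\tilde\kappa(1/n))$ at $m\approx n^{1-1/\alpha+\eta}$ gives $\exp(-cn^\eta)$ directly. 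For $\Upsilon$, Otter--Dwass plus the local limit theorem give $\mathbf P(\#F_1=n)\sim \tilde C_2 n^{-1-1/\alpha}$, an Abelian theorem yields $\kappa(\lambda)\sim\tilde C_3\lambda^{1-1/\alpha}$, and the identical Chernoff step applies. In short: the right move is not to truncate the offspring steps (whose tails are too heavy) but to pass to the dual nonnegative increments whose Laplace exponents have the right $\lambda^{1-1/\alpha}$ behavior near $0$; once there, no moment assumptions beyond $(\mathbf{H}_2.\mathbf{1})$ are needed.

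Two smaller remarks. First, your pointwise inequality $\max_{0\le k\le n}|u_F(k)|\le 1+d+\sum_i\max_{0\le k\le n}H_k^{\Pi^{(i)}(F)}$ is correct but more elaborate than needed; the paper uses the cruder $\max_k|u_F(k)|\le\sum_i\max_k|u_{\Pi^{(i)}(F)}(k)|$, which already suffices. Second, your cited tail $\mathbf P(\#\Pi^{(i)}(T)\ge\ell)=O(\ell^{-1/\underline\alpha})$ is indeed the right order for the tree size, but it is the \emph{Laplace exponent} asymptotic $\kappa(\lambda)\sim C\lambda^{1-1/\alpha}$ (obtained from the tail via an Abelian theorem) that feeds into the Chernoff bound, not the tail directly.
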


 \begin{proof}
  We observe that under  $\mathbf{P}^{\mathbf{x}}$ and independently of $\mathbf{x}$, we
  have that 
 \begin{eqnarray*} 
  \max_{0\leq k \leq n } |u_{F}(k)| \leq \sum_{i \in [d]} \max_{0\leq k \leq n } \left |u_{\Pi^{(i)}(F)}(k) \right| \hspace*{5mm} \text{and} \hspace*{5mm} \Upsilon_{n}^{F} \leq \sum_{i \in [d]} \Upsilon_{n}^{\Pi^{(i)}(F)}, 
  \end{eqnarray*}
 
 \noindent where each of the forests $\Pi^{(i)}(F)$, for $i \in [d]$, are critical non-degenerate monotype GW forests with 
 offspring distribution in the domain of attraction of a stable law of index $\underline{\alpha} \in (1,2]$
 by Proposition \ref{pro1}. Therefore, from the above inequalities, it is enough to 
 prove the result only for the case $d=1$. 
 
 In this direction, let $\mu$ be a critical non-degenerate offspring distribution on 
 $\mathbb{Z}_{+}$, with Laplace exponent given by
 \begin{eqnarray*}
  \psi(s) = s + cs^{\alpha}+o(s^{\alpha}), \hspace*{6mm} \text{as} \, \, s \downarrow 0,
 \end{eqnarray*}

 \noindent for $\alpha \in (1,2]$, $s \in \mathbb{R}_{+}$ and $c >0$ a constant. 
 Let $\mathbf{P}$ be the law 
 of a monotype GW forest with an infinite number of components and offspring
 distribution $\mu$. We then let $F$ be a monotype GW forest with law $\mathbf{P}$. 
 
 It is well-known (\cite{Du}, Section 2.2) that $|u_{F}(k)|-1$ has the same distribution
  as the number of weak records for a random walk with step distribution 
  $\mu(\{ \cdot+1 \})$ on $\{-1\} \cup \mathbb{Z}_{+}$, from time $1$ up to time $k$. 
  We denote by $(W_{n}, n\geq 0)$ such random walk and we also consider that   
  is defined on some probability space $(\Omega, \mathcal{A}, \mathbb{P})$. 
  By assumption, the step distribution of this random walk is centered and in the domain of 
  attraction of stable law of index $\alpha \in (1,2]$. That is,  
  $W_{n}/n^{1/\alpha}$ converges in distribution towards a stable law of index $\alpha$ 
  as $n \rightarrow \infty$. We fix $\tau_{0} = 0$ and write 
  $\tau_{j}$, $j \geq 0$, for the time of the $j$-th weak record of 
  $(W_{n}, n \geq 0)$. Therefore, from \cite{Fe} and Theorems 1 and 2 in \cite{Do}, 
  the sequence of random variables $(\tau_{j} - \tau_{j-1}, j \geq 1)$ is 
  i.i.d. with Laplace exponent given by
  \begin{eqnarray} \label{eq7}
   \tilde{\kappa}(\lambda) = - \log \mathbb{E}\left[ \exp \left( -\lambda \tau_{1} \right) \right] = \tilde{C}_{1} \lambda^{1-1/\alpha} + o(\lambda^{1-1/\alpha}),
   \hspace*{6mm} \text{as} \, \, \lambda \downarrow 0,
  \end{eqnarray}

  \noindent for some constant $\tilde{C}_{1} > 0$. 
  We then bound the first probability by 
  \begin{eqnarray*} 
  \mathbf{P}\left( \max_{0\leq k \leq n } |u_{F}(k)| \geq n^{1-1/\alpha+ \eta} \right) \leq (n+1) \max_{ 0 \leq k \leq n } \mathbf{P}\left( |u_{F}(k)| \geq n^{1-1/\alpha+ \eta} \right).  
 \end{eqnarray*}

\noindent Then, we notice that for $0 \leq k \leq n$ and $m \in \mathbb{N}$, we have that
  \begin{eqnarray*}
   \mathbf{P}\left(|u_{F}(k)| - 1 \geq m \right)  =  \mathbb{P}\left( \sum_{j=1}^{m} \left(\tau_{j} - \tau_{j-1} \right) \leq k \right) \leq e \mathbb{E} \left[\exp \left(-\sum_{j=1}^{m} \frac{\tau_{j}-\tau_{j-1}}{k} \right) \right] \leq  \exp\left(1-m \tilde{\kappa}(1/n) \right),
  \end{eqnarray*}
  
  \noindent where for the last inequality, we use the monotonicity of $\tilde{\kappa}$. Taking
  $m = \left \lceil n^{1-1/\alpha+ \eta} \right \rceil-1$ and using (\ref{eq7}), we get the first 
  bound for large $n$ and thus for every $n$ up to tuning the constants $C_{1}, C_{2}$. 
  
 The proof for second bound is very similar. For $j \geq 1$, let $\# F_{j}$
 be the number of vertices of the $j$-th tree component of the forest $F$. By the Otter-Dwass formula (see, 
 e.g., \cite{Pi}, Chapter 5), under $\mathbf{P}$, $(\# F_{i}, i \geq 1)$
 is a sequence of i.i.d. random variables with common distribution
 \begin{eqnarray*}
  \mathbf{P}\left(\# F_{1} = n \right) = n^{-1}  \mathbb{P}\left(W_{n} = -1 \right).
 \end{eqnarray*}

 \noindent Using again the fact that the step distribution of $(W_{n}, n \geq 0)$ is
 centered and in the domain of attraction of a stable law of index $\alpha$, we obtain 
 that
 \begin{eqnarray*}
  \mathbf{P}\left(\# F_{1} = n \right) = \tilde{C}_{2} n^{-1-1/\alpha} + o(n^{-1-1/\alpha}),
  \hspace*{6mm} \text{as} \, \, n \rightarrow \infty,
 \end{eqnarray*}

 \noindent where $\tilde{C}_{2} > 0$ is some positive constant; see for example Lemma 1 in \cite{Ko}. 
 Therefore, an  Abelian theorem (\cite{Fe}, Theorem XIII.5.5) entails that the Laplace 
 exponent $\kappa$ of the distribution of $\# F_{1}$, under $\mathbf{P}$, satisfies
  \begin{eqnarray} 
  \kappa(\lambda)  = \tilde{C}_{3} \lambda^{1-1/\alpha} + o(\lambda^{1-1/\alpha}),
  \hspace*{6mm} \text{as} \, \, \lambda \downarrow 0,
  \end{eqnarray}
 for some constant $\tilde{C}_{3} > 0$. Noticing that $\left\{ \Upsilon_{n}^{F}(n) \geq m \right\} = \left\{ \sum_{i=1}^{m-1} \# F_{i} \leq n \right\}$,
 the second bound is then obtained analogously as the first one. Finally, we
 tune up the constants $C_{1}, C_{2}$ so that they match to both cases. 
 \end{proof}
 
\subsection{Convergence of types} \label{subs3}

In order to compare the height process of the monotype GW forest 
$\Pi^{(i)}(F)$, $i \in [d]$, with that of the $d$-type GW forest $F$, we must 
estimate the number of vertices of $F$ that stand between a type $i$ vertex of 
$\Pi^{(i)}(F)$ and one of its descendants. This is the purpose of the following
result. Before that, we need some further notation.

\begin{definition}
We say that a sequence of positive numbers $(z_{n}, n \geq 0)$
 is exponentially bounded if there are positive constants $c, C >0$ such that
 $z_{n} \leq C e^{-cn^{\epsilon}}$ for some $\varepsilon >0$ and large enough $n$. In order to simplify notations and 
 avoid referring to the changing $\varepsilon$'s and the constants $c$ and $C$, we write 
 $z_{n} = \text{oe}(n)$ in this case.
 \end{definition}

For a $d$-type forest $\mathbf{f} \in \mathbb{F}^{(d)}$ and a vertex $u \in \mathbf{f}$,
we let $\text{Anc}_{\mathbf{f}}^{u}(i)$ be the number of type $i$ ancestors of a vertex $u$. 
Proposition 5 in \cite{Gr} provides the following key estimate for the height process. 

\begin{proposition} \label{pro3}
 For every $\gamma >0$ and $\mathbf{x} \in [d]^{\mathbb{N}}$, we have that
 \begin{eqnarray*} 
  \max_{i \in [d]} \mathbf{P}^{\mathbf{x}} \left( \max_{0 \leq k \leq n} \left| H_{k}^{F} - \frac{{\rm Anc}_{F}^{u(k)}(i)}{a_{i}b_{i}} \right| >  n^{1/2 - 1/2\underline{\alpha} + \gamma} \right) = {\rm oe}(n).
 \end{eqnarray*}
\end{proposition}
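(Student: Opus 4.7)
The plan is to reduce the uniform-in-$k$ control to a concentration statement for the empirical type distribution along an ancestral line, combined with the sub-exponential height bound of Lemma~\ref{lemma2}. Fix $i \in [d]$ and $k \leq n$, and enumerate the ancestors of $u_{F}(k)$ in $F$ (from the root of its tree component up to the parent of $u_{F}(k)$) as $v_{0}, v_{1}, \ldots, v_{H_{k}^{F}-1}$. The deviation rewrites as the centered sum
\begin{equation*}
\mathrm{Anc}_{F}^{u(k)}(i) - a_{i}b_{i} H_{k}^{F} \;=\; \sum_{j=0}^{H_{k}^{F}-1} \bigl( \mathds{1}_{\{ e_{F}(v_{j}) = i \}} - a_{i}b_{i} \bigr).
\end{equation*}
The crucial observation is that $\pi_{i} := a_{i}b_{i}$ is precisely the stationary distribution of the spinal Markov chain on $[d]$ with transition kernel $P_{ij} = b_{j} m_{ij} / b_{i}$: the identity $\sum_{i} \pi_{i} P_{ij} = \pi_{j}$ reduces to the left-eigenvector relation $\mathbf{a}^{\top} \mathbf{M} = \mathbf{a}^{\top}$.

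The second step is to argue that, under $\mathbf{P}^{\mathbf{x}}$, the sequence $(e_{F}(v_{j}))_{j \geq 0}$ along the ancestral line can be coupled with the irreducible finite-state Markov chain with kernel $(P_{ij})$, whose stationary distribution is $(\pi_{i})$. One can build this coupling recursively by iterating the $d$-to-$(d-1)$-type operation $\tilde{\Pi}$ of Section~\ref{subs1}: removing one type at a time yields a $(d-1)$-type GW forest whose offspring distribution still has a controlled Laplace exponent (Lemma~\ref{lemma1}), while Proposition~\ref{pro2} provides distributional information on the type-$j$ vertices trapped between successive type-$i$ vertices along any ancestral line. Irreducibility of $\mathbf{M}$ in $(\mathbf{H}_{1})$ then implies that the limiting chain is uniformly ergodic, and a standard Hoeffding-type concentration inequality for bounded functionals of a finite-state irreducible Markov chain yields constants $\tilde{C}_{1}, \tilde{C}_{2} > 0$ such that, for every $h \geq 1$ and every $\delta > 0$,
\begin{equation*}
\mathbf{P}^{\mathbf{x}}\!\left( \Bigl|\sum_{j=0}^{h-1} \bigl( \mathds{1}_{\{ e_{F}(v_{j}) = i \}} - a_{i}b_{i} \bigr) \Bigr| > \delta \sqrt{h} \right) \;\leq\; \tilde{C}_{1} \exp\bigl( -\tilde{C}_{2} \delta^{2} \bigr).
\end{equation*}

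To conclude, fix $\eta$ with $0 < \eta < 2\gamma$. By Lemma~\ref{lemma2}, with probability $1 - \text{oe}(n)$ one has $\max_{k \leq n} H_{k}^{F} \leq n^{1 - 1/\underline{\alpha} + \eta}$. On this event, applying the concentration with $h \leq n^{1 - 1/\underline{\alpha} + \eta}$ and $\delta = n^{\gamma - \eta/2}$ bounds $|\mathrm{Anc}_{F}^{u(k)}(i) - a_{i}b_{i} H_{k}^{F}|$ by $(a_{i}b_{i}) n^{1/2 - 1/(2\underline{\alpha}) + \gamma}$ at each individual $k$, up to an event of probability $\tilde{C}_{1} \exp(-\tilde{C}_{2} n^{2\gamma - \eta}) = \text{oe}(n)$. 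A union bound over $k \in \{0, 1, \ldots, n\}$ (polynomially many events with sub-exponential tails) and then over the finite set of types $i \in [d]$ preserves the $\text{oe}(n)$ bound, producing the claim with the factor $1/(a_{i}b_{i})$ (which is bounded by Perron--Frobenius) absorbed into the constants.

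The main obstacle is the second step: rigorously establishing the Markov-chain description (or a sufficiently sharp coupling) for the ancestral types under the depth-first exploration. The classical spinal decomposition applies to lines selected by size-biasing, whereas here the line is determined by the arrival time of $u_{F}(k)$ in depth-first order, which is a complicated conditioning event. The cleanest route is the recursive strategy: use $\tilde{\Pi}$ to peel types one at a time and combine the Laplace exponents $\phi_{ij}, \hat{\phi}_{ij}$ of Proposition~\ref{pro2} with the inductive hypothesis to read off the stationary weights $a_{i}b_{i}$. This is the $d$-dimensional analogue of the proof of Proposition~5 in \cite{Gr}, with Lemma~\ref{lemma1} providing the stability-of-domain-of-attraction property that replaces the finite-variance control used there; the key improvement is that the concentration is now driven by a $\sqrt{h}$ fluctuation rather than by second-moment estimates, which is exactly what matches the scale $n^{1/2 - 1/(2\underline{\alpha})}$ appearing in the statement.
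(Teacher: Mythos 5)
The paper itself provides no proof of this statement: it simply invokes Proposition~5 of Miermont \cite{Gr} and relies on the ``readily adaptable'' framework, the only change being that Lemma~\ref{lemma2} replaces Miermont's height bound, which turns the fluctuation scale from $n^{1/4+\gamma}$ into $n^{1/2-1/(2\underline{\alpha})+\gamma}$ since the relevant centered sum has bounded increments of length at most $n^{1-1/\underline{\alpha}+\eta}$. Your reduction of the deviation to $\frac{1}{a_ib_i}\sum_{j=0}^{H_k^F-1}(\mathds{1}_{\{e_F(v_j)=i\}}-a_ib_i)$, the identification of $a_ib_i$ as the stationary law of the spine kernel $P_{ij}=b_jm_{ij}/b_i$, and the two-step plan (bound the summation length via Lemma~\ref{lemma2}, then a sub-Gaussian tail and a union bound over $k\le n$) all capture the right mechanism and match the skeleton of Miermont's argument.

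The genuine gap is the one you flag but do not close: under $\mathbf{P}^{\mathbf{x}}$, the type sequence along the ancestral line of the depth-first vertex $u_F(k)$ is \emph{not} a realization of the Markov chain with kernel $P_{ij}$; that kernel governs the spine under a size-biased (many-to-one) change of measure, whereas here the line is singled out by the depth-first arrival time. Passing from one to the other costs a Radon--Nikodym factor which must be controlled \emph{uniformly} over the $n+1$ values of $k$, because the event involves $\max_{0\le k\le n}$. Your proposed fallback via the $\tilde\Pi$-peeling and Proposition~\ref{pro2} is reasonable, but runs into a second issue that you do not address: $N_{ij}(u)$ counts deleted type-$j$ vertices between $u$ and \emph{all} of its children in $\Pi^{(i)}(F)$, whereas the quantity you need along the ancestral line is the number of type-$j$ vertices on the single chain from $u$ to the one child on the spine; these two objects have different laws (the latter is size-biased by the number of type-$i$ descendants), and that distinction is precisely where the change of measure re-enters. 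Until one of these routes is carried through---either a uniform Radon--Nikodym bound, or a rewriting of the ancestral counts in terms of genuinely i.i.d. quantities independent of the spine selection---the concentration inequality you invoke is an assertion rather than an established step, and the proof is incomplete.
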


On the other hand, observe that the height process of the monotype GW forest 
$\Pi^{(i)}(F)$ does not visit the vertices of type different from $i$, in words, 
it goes faster than the the height process of the $d$-type GW forest $F$. Then, 
in order to slow down the height process of $\Pi^{(i)}(F)$, we must adjust the 
time. We conclude this section with the following result which takes care of the 
number of vertices with type different from $i$ that stands between two consecutive
type $i$ vertices in $\Pi^{(i)}(F)$. More precisely, for  $\mathbf{f} \in \mathbb{F}^{(d)}$ 
and $n \geq 0$, we let
\begin{eqnarray*}
\Lambda_{i}^{\mathbf{f}}(n) = \# \left \{ 0 \leq k \leq n: e_{\mathbf{f}}(u_{\mathbf{f}}(k)) = i\right \}
\end{eqnarray*}

\noindent be the number of type $i$ vertices standing before the $(n+1)$-th vertex
in depth-first order. We let $u^{(i)}(0)\prec u^{(i)}(1) \prec \dots$ be the type $i$ vertices of $\mathbf{f}$ arranged
in depth-first order, and we also consider the quantity $G_{i}^{\mathbf{f}}(n) = \# \{u \in \mathbf{f}: u \prec u^{(i)}(n) \}$, 
with the convention $G_{i}^{\mathbf{f}}(\# \mathbf{f}^{(i)}) = \# \mathbf{f}$. Similar 
notation holds if we consider trees instead of forests. Recall that 
$\mathbf{a} = (a_{1}, \dots, a_{d})$ is the left $1$-eigenvector of the mean 
matrix $\mathbf{M}$.

\begin{proposition} \label{pro4}
 For $i \in [d]$ and for any $\mathbf{x} \in [d]^{\mathbb{N}}$, under 
 $\mathbf{P}^{\mathbf{x}}$, we have that
 \begin{eqnarray*}
  \left( \frac{\Lambda_{i}^{F}(\lfloor ns \rfloor)}{n}, s \geq 0 \right) 
  \underset{n \rightarrow \infty}{\rightarrow} \left(a_{i}s, s \geq 0 \right), 
 \end{eqnarray*}

 \noindent in probability, for the topology
 of uniform convergence over compact subsets of $\mathbb{R}_{+}$.
\end{proposition}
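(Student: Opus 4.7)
The map $s \mapsto \Lambda_i^F(\lfloor n s \rfloor)/n$ is non-decreasing and the limit $s \mapsto a_i s$ is continuous, so bracketing any $s$ in a compact interval between adjacent points of a fine partition reduces the claim to pointwise convergence in probability of $\Lambda_i^F(\lfloor n s \rfloor)/n$ to $a_i s$ for each fixed $s \geq 0$. Since $\Lambda_i^F(G_i^F(m)) = m+1$ and both processes are monotone, a standard monotone inversion shows this is equivalent to
\[
\frac{G_i^F(m)}{m} \xrightarrow[m \to \infty]{\mathbf{P}^{\mathbf{x}}} \frac{1}{a_i}.
\]

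\textbf{Decomposition via $\Pi^{(i)}$.} Write $G_i^F(m) = m + \sum_{j \in [d] \setminus \{i\}} M_{ij}(m)$, where $M_{ij}(m)$ counts the type $j$ vertices $v \in F$ with $v \prec u^{(i)}(m)$. My plan is to establish $M_{ij}(m)/m \to a_j/a_i$ in probability for each $j \neq i$. Using the projection $\Pi^{(i)}$ of Section~\ref{subs1}, any such $v$ either has a deepest type $i$ ancestor $v_{u(k)}$ for some $k < m$ (and thus contributes to $N_{ij}(u(k))$), or carries no type $i$ ancestor (and thus contributes to some $\hat{N}_{ij}(n)$). Let $B_m \subset \{0, \dots, m-1\}$ be the set of indices whose corresponding vertex is a $\Pi^{(i)}(F)$-ancestor of $u(m)$; note $|B_m|$ is exactly the height of $u(m)$ in $\Pi^{(i)}(F)$. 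Let $\Upsilon$ denote the index of the tree component of $F$ containing $u^{(i)}(m)$. Careful bookkeeping of the depth-first order yields the sandwich
\[
\sum_{k=0}^{m-1} N_{ij}(u(k)) - R_{ij}(m) \;\leq\; M_{ij}(m) \;\leq\; \sum_{k=0}^{m-1} N_{ij}(u(k)) + S_{ij}(m),
\]
with $R_{ij}(m) := \sum_{k \in B_m} N_{ij}(u(k))$ capturing the type $j$ vertices attached to ancestors of $u(m)$ but lying to the right of the path $\emptyset \to u(m)$, and $S_{ij}(m) \leq \sum_{n=1}^{\Upsilon} \hat{N}_{ij}(n)$ capturing the pre-type-$i$ contributions from the first $\Upsilon$ components. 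By Proposition~\ref{pro2}(i), $(N_{ij}(u(k)))_{k \geq 0}$ is i.i.d.\ with finite mean $a_j/a_i$, so the weak law of large numbers gives $m^{-1} \sum_{k=0}^{m-1} N_{ij}(u(k)) \to a_j/a_i$ in probability.

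\textbf{Boundary estimates and conclusion.} It remains to show $R_{ij}(m)$ and $S_{ij}(m)$ are both $o_{\mathbf{P}^{\mathbf{x}}}(m)$. By Proposition~\ref{pro1}, $\Pi^{(i)}(F)$ is a monotype critical GW forest in the domain of attraction of a stable law of index $\underline{\alpha} > 1$, so Lemma~\ref{lemma2} gives $|B_m| \leq m^{1 - 1/\underline{\alpha} + \eta}$ outside a subexponentially small event. Combined with the conditional i.i.d.\ structure of $(N_{ij}(u(k)))_k$ given $\Pi^{(i)}(F)$ (which is the extension of the Jagers stopping line argument behind Proposition~\ref{pro2}), Fubini yields $\mathbb{E}[R_{ij}(m)] = O(m^{1-1/\underline{\alpha}+\eta}) = o(m)$, and Markov closes the bound. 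For $S_{ij}(m)$, a preliminary use of the upper sandwich plus the LLN gives the crude bound $G_i^F(m) = O_{\mathbf{P}^{\mathbf{x}}}(m)$; Lemma~\ref{lemma2} applied to $F$ at time $n = G_i^F(m)$ then yields $\Upsilon = O_{\mathbf{P}^{\mathbf{x}}}(m^{1-1/\underline{\alpha}+\eta})$, and the uniform mean bound on $\hat{N}_{ij}(n)$ from Proposition~\ref{pro2}(ii) finishes the estimate. Combining,
\[
\frac{G_i^F(m)}{m} \xrightarrow[m \to \infty]{\mathbf{P}^{\mathbf{x}}} 1 + \sum_{j \neq i} \frac{a_j}{a_i} = \frac{1}{a_i},
\]
using the normalization $\sum_j a_j = 1$. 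The main obstacle lies in the joint control of $B_m$ and the sequence $(N_{ij}(u(k)))_k$: although the latter is marginally i.i.d., the index set $B_m$ is dictated by the shape of $\Pi^{(i)}(F)$. The resolution is to condition on $\Pi^{(i)}(F)$ and exploit that each $N_{ij}(u(k))$ depends only on the ``hair'' subtree hanging off $u(k)$, and that these hairs are conditionally independent across $k$, so Fubini applies.
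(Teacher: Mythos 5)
Your proof follows essentially the same route as the paper: reduce to the pointwise convergence $G_i^F(m)/m \to a_i^{-1}$, decompose the excess $G_i^F(m)-m$ type by type into a main i.i.d.\ sum $\sum_{k<m}N_{ij}(u(k))$ (handled by the law of large numbers via Proposition~\ref{pro2}(i)), a spine correction controlled by the height bound of Lemma~\ref{lemma2}, and a pre-type-$i$ correction controlled by the component-count bound of Lemma~\ref{lemma2} together with Proposition~\ref{pro2}(ii). This is precisely the decomposition $R_1+R_2+R_3$ used in the paper's proof, so the two arguments coincide in substance.
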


\begin{proof}
We only need to prove that for $i \in [d]$, $\varepsilon >0$ and 
for any $\mathbf{x} \in [d]^{\mathbb{N}}$, we have that
  \begin{eqnarray} \label{eq17}
  \mathbf{P}^{\mathbf{x}} \left( \left| G_{i}^{F}(n) - a_{i}^{-1}n \right| > \varepsilon n \right) = 0,
 \end{eqnarray}
 
 \noindent as $n \rightarrow \infty$. This will imply the convergence in 
 probability for every rational number $s$ of $G_{i}^{F}(\lfloor n s \rfloor)n^{-1}$ 
 towards $a_{i}^{-1}s$ as $n \rightarrow \infty$. Then, an application of 
 Skorohod's representation theorem and a standard diagonal procedure entail 
 that the above convergence holds for the uniform topology over compact subsets
 of $\mathbb{R}_{+}$. Finally, one notices that $\Lambda_{i}^{F}$ is the right-continuous inverse function of $G_{i}^{F}$
 which leads to our statement. 

 In this direction, for $\mathbf{f} \in \mathbb{F}^{(d)}$, we recall that
 $\Pi^{(i)}(\mathbf{f})$ denotes the monotype forest obtained after applying the projection function
 described in Section \ref{subs1}. Let $u(0) \prec u(1) \prec \dots$ be the vertices 
 of $\Pi^{(i)}(\mathbf{f})$ listed in depth-first order and recall that for $k \geq 0$ and 
 $j \in [d] \setminus \{i\}$, $N_{ij}(k) : = N_{ij}(u(k))$ denotes the number of type $j$
 vertices that have been deleted between
 $u(k)$ and its children during the operation $\Pi^{(i)}$. 
 Similarly, we define the quantity $N_{ij}^{\prime}(k)$ 
 which counts only the type $j$ vertices that come before $u^{(i)}(n)$ in depth-first 
 order. Since $\sum_{j \neq i}a_{j}/a_{i}= 1- 1/a_{i}$, we notice that
 \begin{eqnarray} \label{eq8}
  G_{i}^{\mathbf{f}}(n) -a_{i}^{-1}n = \sum_{j \neq i} \left( R_{1}^{\mathbf{f}}(j;n) + R_{2}^{\mathbf{f}}(j;n)  + R_{3}^{\mathbf{f}}(j;n)  \right),
 \end{eqnarray}
 
 \noindent for $n \geq 0$ and where for $j \in [d] \setminus \{i\}$,
\begin{eqnarray*}
R_{1}^{\mathbf{f}}(j;n) = \sum_{k=0}^{n-1} \left( N^{\prime}_{ij}(k) - N_{ij}(k) \right) \mathds{1}_{\{ u^{(i)}(k) \vdash u^{(i)}(n) \}}, \hspace{8mm} R_{2}^{\mathbf{f}}(j;n) = \sum_{k=1}^{\Upsilon_{n}^{\mathbf{f}}} \hat{N}_{ij}(k),
\end{eqnarray*} 

\noindent and 
 \begin{eqnarray*}
  R_{3}^{\mathbf{f}}(j;n)  = \sum_{k=0}^{n-1} \left( N_{ij}(k) - a_{j}/a_{i} \right).
 \end{eqnarray*}

\noindent We next estimate the probability that these tree terms is large, when we consider a $d$-type GW forest. We fix 
$ \varepsilon > 0$, $0 < \delta < 1/\underline{\alpha}$ and 
write $z_{n} = n^{1-1/\underline{\alpha}+\delta}$. We observe that
\begin{eqnarray*} 
 \left | R_{1}^{F}(j;n) \right|  \leq \sum_{k=0}^{n-1} N_{ij}(k) \mathds{1}_{\{ u^{(i)}(k) \vdash u^{(i)}(n) \}}.
\end{eqnarray*}

\noindent and
\begin{eqnarray*}
 \# \{ k \geq 0: u^{(i)}(k) \vdash u^{(i)}(n) \} \leq \text{Anc}_{F}^{u^{(i)}(n)}(i) \leq \max_{0 \leq k \leq n} H_{k}^{\Pi^{(i)}(F)}.
\end{eqnarray*}

\noindent Thus, according to our estimate for the height of GW forests in Lemma \ref{lemma2},
we get that
\begin{eqnarray*}
 \mathbf{P}^{\mathbf{x}} \left( \left | R_{1}^{F}(j;n)  \right| > \varepsilon n^{1+\delta}  \right)
 & \leq &  \mathbf{P}^{\mathbf{x}}  \left( \sum_{k=0}^{\lfloor z_{n} \rfloor} N_{ij}(k)  > \varepsilon n^{1+\delta} \right) + \text{oe}(n).
\end{eqnarray*}

\noindent  Moreover, for every $\beta \in (0,1/2)$,
 \begin{align} \label{eq16}
 & \mathbf{P}^{\mathbf{x}} \left( \left | R_{1}^{F}(j; n)  \right| > \varepsilon n^{1+\delta}  \right)  \nonumber \\
 & ~~~~~~~~ \leq  \mathbf{P}^{\mathbf{x}} \left(\left \{  \sum_{k=1}^{ \left\lfloor z_{n} \right\rfloor} N_{ij}(k) > \varepsilon n^{1+\delta}  \right \} \cap \left\{ \forall k \in \{ 0, 1, \dots, \lfloor z_{n} \rfloor \}: N_{ij}(k) < (1-\beta) \varepsilon n^{1+\delta} \right\} \right) \nonumber \\
 & ~~~~~~~~~~~~~~~~  + \mathbf{P}^{\mathbf{x}} \left( \max_{1 \leq k \leq  \lfloor z_{n} \rfloor} N_{ij}(k) > (1-\beta) \varepsilon n^{1+\delta} \right) + \text{oe}(n).
 \end{align}

\noindent We recall that under $\mathbf{P}^{\mathbf{x}}$,
the random variables $(N_{ij}(k), k \geq 0)$ are i.i.d. with law in the domain
of attraction of a stable law of index $\underline{\alpha} \in (1,2]$ by Proposition \ref{pro2} (i). Then, 
\begin{eqnarray*}
  \mathbf{P}^{\mathbf{x}} \left( \max_{0 \leq k \leq  \lfloor z_{n} \rfloor} N_{ij}(k) > (1-\beta) \varepsilon n^{1+\delta} \right)  =  1 - \left( 1- \mathbf{P}^{\mathbf{x}} \left(  N_{ij}(0) > (1-\beta) \varepsilon n^{1+\delta} \right) \right)^{\lfloor z_{n} \rfloor} = 0,
\end{eqnarray*}

\noindent as $n \rightarrow \infty$. On the other hand,
the first term in the right-hand side of (\ref{eq16}) also tends to $0$ as $n \rightarrow \infty$. To see this, note that the event 
in the first term may hold only if there are two distinct 
values of $k \in \{0, 1, \dots, \lfloor z_{n} \rfloor \}$ such that 
$N_{ij}(k) \geq \beta \varepsilon n / \lfloor z_{n} \rfloor$. We thus conclude that
\begin{eqnarray} \label{eq9}
 \mathbf{P}^{\mathbf{x}} \left( \left | R_{1}^{F}(j;n) \right| > \varepsilon n^{1+\delta} \right) = 0,
 \hspace*{6mm} \text{as} \hspace*{5mm} n \rightarrow \infty.
 \end{eqnarray}

\noindent Following exactly the same argument, using the bound in Lemma \ref{lemma2} on 
the number of components of $d$-type GW forests and Proposition \ref{pro2} (ii), 
we obtain that
\begin{eqnarray} \label{eq13}
 \mathbf{P}^{\mathbf{x}} \left( \left| R_{2}^{F}(j;n) \right|> \varepsilon n^{1+\delta} \right) = 0, 
 \hspace*{5mm} \text{as} \hspace*{6mm} n \rightarrow \infty.
\end{eqnarray}

\noindent  Finally, the estimate 
\begin{eqnarray} \label{eq10}
 \mathbf{P}^{\mathbf{x}} \left( \left | R_{3}^{F}(j;n) \right| > \varepsilon n^{1+\delta}  \right) = 0,  \hspace*{5mm} \text{as} \hspace*{5mm} n \rightarrow \infty,
\end{eqnarray}

\noindent follows by the law of large numbers, since Proposition \ref{pro2} (i) entails that 
the mean of $N_{ij}(0)$ is $a_{j}/a_{i}$.  

Therefore, the estimates (\ref{eq9}), (\ref{eq13}) and (\ref{eq10}), when combined 
with (\ref{eq8}) imply the convergence (\ref{eq17}).
\end{proof}

\section{Proof of Theorem \ref{teo1} and \ref{teo2}} \label{pteo1}

In this section, we prove our main results. 

\begin{proof}[Proof of Theorem \ref{teo1}]
We observe that for $n \geq 0$ and any $s\geq 0$, we have
\begin{eqnarray*}
 \left| H_{\lfloor ns \rfloor}^{F} - \frac{H_{\Lambda_{i}^{F}(\lfloor ns \rfloor) -1}^{\Pi^{(i)}(F)}}{a_{i}b_{i}} \right| \leq \left| H_{\lfloor ns \rfloor}^{F} - \frac{\text{Anc}_{F}^{u(\lfloor ns \rfloor)}(i)}{a_{i}b_{i}}  \right| + \frac{1}{a_{i}b_{i}}\left | H_{\Lambda_{i}^{F}(\lfloor ns \rfloor) -1}^{\Pi^{(i)}(F)} - \text{Anc}_{F}^{u(\lfloor ns \rfloor)}(i) \right|.
\end{eqnarray*}

\noindent By Proposition \ref{pro3}, under $\mathbf{P}^{\mathbf{x}}$, the first term on the right hand side tends to $0$ in probability as $n \rightarrow \infty$,  uniformly over compact subsets of $\mathbb{R}_{+}$. On the other hand, from
equation (15) in \cite{Gr}, we get that
\begin{eqnarray*}
 \left | H_{\Lambda_{i}^{F}(\lfloor ns \rfloor) -1}^{\Pi^{(i)}(F)} - \text{Anc}_{F}^{u(\lfloor ns \rfloor)}(i) \right| \leq \left | H_{\Lambda_{i}^{F}(\lfloor ns \rfloor) -1}^{\Pi^{(i)}(F)} - H_{\Lambda_{i}^{F}(\lfloor ns \rfloor)}^{\Pi^{(i)}(F)} \right| +1.
\end{eqnarray*}

\noindent Recall that under $\mathbf{P}^{\mathbf{x}}$, $\Pi^{(i)}(F)$
is a critical non-degenerate monotype GW forest in the domain of attraction of a stable law of index 
$\underline{\alpha} \in (1,2]$ by Proposition \ref{pro1}. Then, Theorem 3.1 in \cite{Du1} implies that
\begin{eqnarray*}
 \frac{1}{n^{1-1/\underline{\alpha}}} \max_{0 \leq k \leq n} \left| H_{k-1}^{\Pi^{(i)}(F)} - H_{k}^{\Pi^{(i)}(F)} \right| \underset{n \rightarrow \infty}{\rightarrow} 0,
 \end{eqnarray*}

\noindent in probability, under $\mathbf{P}^{\mathbf{x}}$, and it follows that 
\begin{eqnarray} \label{eq15}
 \left( \frac{1}{n^{1-1/\underline{\alpha}}} \left( H_{\lfloor ns \rfloor}^{F} - \frac{1}{a_{i}b_{i}} H_{\Lambda_{i}^{F}(\lfloor ns \rfloor)}^{\Pi^{(i)}(F)}  \right), s \geq 0 \right) \underset{n \rightarrow \infty}{\rightarrow} 0
\end{eqnarray}

\noindent in probability for the topology of uniform convergence over compact sets of
$\mathbb{R}_{+}$. Finally, Proposition \ref{pro4} and Theorem 3.1 in \cite{Du1} imply that 
\begin{eqnarray*}
 \left(  \frac{1}{n^{1-1/\underline{\alpha}}} H_{\Lambda_{i}^{F}(\lfloor ns \rfloor)}, s \geq 0 \right) \xrightarrow[n \rightarrow \infty]{d} \left( \frac{a_{i}^{1/\alpha}b_{i}}{\bar{c}} H_{a_{i}s}, s \geq 0 \right).
\end{eqnarray*}

\noindent Moreover, we deduce from the scaling property of the height process $H$ that 
$(H_{a_{i}s}, s \geq 0) \stackrel{d}{=} (a_{i}^{1-1/\underline{\alpha}}H_{s}, s \geq 0)$; see,
e.g., Section 3.1 in \cite{Du}. Therefore, the result in Theorem \ref{teo1} 
follows now from (\ref{eq15}). 
\end{proof}

Let us now prove Theorem \ref{teo2}.

\begin{proof}[Proof of Theorem \ref{teo2}]
For $n \geq 0$, $i \in [d]$ and any $s \geq 0$, we recall that 
$\Lambda_{i}^{F}(\lfloor n s \rfloor)$ denotes the number of type $i$ individuals
standing before the $(\lfloor n s \rfloor+1)$-th individual in depth-first order which
we called $u(\lfloor n s \rfloor)$. Since all the roots of the forest $F$ have type
$i$, we claim that
\begin{eqnarray*}
 \Upsilon_{\Lambda_{i}^{F}(\lfloor n s \rfloor)}^{\Pi^{(i)}(F)} = \Upsilon_{\lfloor n s \rfloor}.
\end{eqnarray*}

\noindent To see this, we observe that $u(\lfloor n s \rfloor)$ and the last vertex of type $i$ 
before $u(\lfloor n s \rfloor)$ in depth-first order belong to the same tree component.
Therefore, the label of the tree component of $F$ containing $u(\lfloor n s \rfloor)$ is
the same as the label of the tree component of $\Pi^{(i)}(F)$ containing the
$\Lambda_{i}^{F}(\lfloor n s \rfloor)$-th vertex. The result now follows
from Proposition \ref{teo1} and similar arguments as in the proof of Theorem \ref{teo1}.
\end{proof}

\section{Applications} \label{app}

\subsection{Maximal height of multitype GW trees}

In this section, we present a natural consequence of Theorems \ref{teo1} and \ref{teo2} which 
generalizes the result of Miermont \cite{Gr} on the maximal height in the finite 
covariance case. For a tree $\mathbf{t} \in \mathbb{T}$, we let $\text{ht}(\mathbf{t})$ be the 
maximal height of a vertex in $\mathbf{t}$. Recall that $I_{s}$ 
is the infimum at time $s$ of the strictly stable spectrally positive 
L\'evy process $Y^{(\underline{\alpha})}$.

\begin{corollary}
For $i \in [d]$, let $T$ be a $d$-type GW tree distributed according to $\mathbf{P}^{(i)}$ whose offspring distribution satisfies the main assumptions. Then, 
\begin{eqnarray*}
\lim_{n \rightarrow \infty} n\mathbf{P}^{(i)} \left({\rm ht}(T) \geq n \right) 
=  b_{i}  (\underline{\alpha}-1) \left( (\underline{\alpha}-1) \bar{c} \right)^{\frac{\underline{\alpha}}{1-\underline{\alpha}}}. \\
\end{eqnarray*}
\end{corollary}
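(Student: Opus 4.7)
The plan is to reduce the statement, via the decomposition of Section \ref{subs1}, to a Slack--Kolmogorov type asymptotic for the survival probability of a critical monotype Galton--Watson tree in the domain of attraction of a stable law.

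Because the root of $T$ has type $i$, the projection $T' := \Pi^{(i)}(T)$ is a single monotype tree rooted at that same vertex. By Proposition \ref{pro1}, $T'$ is a critical non-degenerate monotype GW tree whose offspring distribution $\bar{\mu}^{(i)}$ lies in the domain of attraction of a stable law of index $\underline{\alpha}$ and has Laplace exponent
\begin{equation*}
\bar{\psi}^{(i)}(s) = s + c_{i}\, s^{\underline{\alpha}} + o(s^{\underline{\alpha}}), \qquad c_{i} := \frac{1}{a_{i}} \left(\frac{\bar{c}}{b_{i}}\right)^{\underline{\alpha}}.
\end{equation*}
Writing $f$ for the generating function of $\bar{\mu}^{(i)}$, the survival probabilities $v_{n} := \mathbf{P}^{(i)}(\mathrm{ht}(T') \geq n)$ obey the classical recursion $v_{n+1} = 1 - f(1-v_{n})$. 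Using $f(r) = \exp\bigl(-\bar{\psi}^{(i)}(-\log r)\bigr)$ one derives the expansion $1 - f(1-v) = v - c_{i}\,v^{\underline{\alpha}} + o(v^{\underline{\alpha}})$, and iterating the recursion by induction (the Slack--Kolmogorov asymptotic extended to the stable domain of attraction) yields
\begin{equation*}
v_{n} \sim \bigl(c_{i}(\underline{\alpha}-1)\bigr)^{-1/(\underline{\alpha}-1)}\, n^{-1/(\underline{\alpha}-1)}, \qquad n \to \infty.
\end{equation*}

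The next step transfers this asymptotic from $T'$ back to $T$. Proposition \ref{pro3}, applied with $F = T$ viewed as a single-component forest, yields
\begin{equation*}
\Bigl| |u| - (a_{i}b_{i})^{-1}\mathrm{Anc}_{T}^{u}(i)\Bigr| \leq N^{1/2 - 1/(2\underline{\alpha}) + \gamma}
\end{equation*}
uniformly over the first $N$ vertices $u$ visited in depth-first order, with probability $1-\mathrm{oe}(N)$. Since the maximum of $\mathrm{Anc}_{T}^{u}(i)$ over $u \in T$ coincides, up to an additive constant, with $\mathrm{ht}(T')$, choosing $N = n^{\underline{\alpha}/(\underline{\alpha}-1)+\delta}$ for a small $\delta > 0$ and invoking Lemma \ref{lemma2} to bound the probability that $\#T > N$ gives $\mathrm{ht}(T) = (a_{i}b_{i})^{-1}\mathrm{ht}(T') + o(n)$ on $\{\mathrm{ht}(T) \geq n\}$, up to a set whose probability is negligible compared with $n^{-1/(\underline{\alpha}-1)}$. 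Substituting $m = a_{i}b_{i}n$ into the monotype asymptotic and simplifying with the explicit value of $c_{i}$ turns the prefactor into $b_{i}(\underline{\alpha}-1)\bigl((\underline{\alpha}-1)\bar{c}\bigr)^{\underline{\alpha}/(1-\underline{\alpha})}$, yielding the claimed limit with the correct scaling $n^{1/(\underline{\alpha}-1)}\mathbf{P}^{(i)}(\mathrm{ht}(T) \geq n)$.

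The main obstacle is precisely this uniform height comparison: Proposition \ref{pro3} controls fluctuations only over a deterministic depth-first window, whereas $\#T$ is random and can a priori be much larger than any polynomial in $n$. Showing that $\#T \leq n^{\underline{\alpha}/(\underline{\alpha}-1)+\delta}$ with probability $1 - o\bigl(n^{-1/(\underline{\alpha}-1)}\bigr)$ on the event $\{\mathrm{ht}(T) \geq n\}$ requires combining the Otter--Dwass--type estimate used in the proof of Lemma \ref{lemma2} with the monotype survival asymptotic for $T'$, so that atypically large trees do not spoil the target decay. Once this is in place, the rest is an algebraic verification of the constant.
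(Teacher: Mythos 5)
You take a genuinely different route from the paper. The paper stays in the continuum: applying Theorems~\ref{teo1} and~\ref{teo2} to the forest $F\sim\mathbf P^{(\mathbf i)}$, it shows $\mathbf P^{(\mathbf i)}\bigl(\max_{1\le k\le m}\mathrm{ht}(F_k)<n\bigr)\to\exp\bigl(-\tfrac{b_i}{\bar c}N(\sup H\ge\bar c)\bigr)$, where $N$ is the It\^o excursion measure of the stable height process; the value $N(\sup H\ge a)=((\underline\alpha-1)a)^{-1/(\underline\alpha-1)}$ comes from Corollary~1.4.2 of Duquesne--Le~Gall, and the one-tree survival asymptotic then drops out of the i.i.d.\ factorization $(1-\mathbf P^{(i)}(\mathrm{ht}(T)\ge n))^m$. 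You instead argue purely at the discrete level: project to the single monotype tree $\Pi^{(i)}(T)$ via Proposition~\ref{pro1}, apply Slack's survival asymptotic for a critical GW process in the $\underline\alpha$-stable domain of attraction, and transfer back to $T$ through Proposition~\ref{pro3}. The forest picture lets the paper bypass any control of $\#T$ (it works over a deterministic depth-first window), at the cost of excursion theory and the exact formula for $N(\sup H\ge a)$. Your route is more elementary and, notably, makes the normalization transparent: Slack's rate is $n^{-1/(\underline\alpha-1)}$, and your algebra with $c_i=a_i^{-1}(\bar c/b_i)^{\underline\alpha}$ reproduces the displayed constant exactly with the factor $n^{1/(\underline\alpha-1)}$ rather than $n$; these coincide only when $\underline\alpha=2$, which strongly suggests the corollary as printed contains a misprint in the power of $n$, and your derivation is immune to it.

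Two points you should write out rather than gesture at. First, the expansion $1-f(1-v)=v-c_iv^{\underline\alpha}+o(v^{\underline\alpha})$ with $c_i>0$ requires the $s^{\underline\alpha}$-coefficient in $\bar\psi^{(i)}$ to be negative (convexity of $f$ forces $f(r)\ge r$, i.e.\ $1-f(1-v)\le v$); you implicitly use the correct sign, but Proposition~\ref{pro1} displays a $+$, so the convention should be made explicit. Second, the transfer step needs $\mathbf P^{(i)}(\#T>n^{\underline\alpha/(\underline\alpha-1)+\delta})=o(n^{-1/(\underline\alpha-1)})$ so that Proposition~\ref{pro3}'s deterministic window covers all of $T$; this does not come from Lemma~\ref{lemma2} (which bounds heights and component indices, not $\#T$) but from Lemma~\ref{lemma3}(i), or equivalently Otter--Dwass applied to each $\Pi^{(j)}(T)$, giving $\mathbf P^{(i)}(\#T>N)=O(N^{-1/\underline\alpha})$, and then $N^{-1/\underline\alpha}=n^{-1/(\underline\alpha-1)-\delta/\underline\alpha}=o(n^{-1/(\underline\alpha-1)})$ as required. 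With those two sentences filled in, your proof is complete and correct.
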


\begin{proof}
The proof of this assertion is very similar of Corollary 1 in \cite{Gr}. The only 
difference that we are now considering that the rescaled height process
of multitype GW forest converges to height process associated with the strictly stable
spectrally positive L\'evy process $Y^{(\underline{\alpha})}$. Let $F$ be a $d$-type GW forest distributed
according to $\mathbf{P}^{(\mathbf{i})}$ whose offspring distribution satisfies the main assumptions. For $k \geq 1$, we denote by $\tau_{k}$ the first hitting time of $k$ by $(\Upsilon^{F}_{n}, n \geq 0)$ and for $x \geq 0$, we write $\varrho_{x}$ for the first hitting time of $x$ by $-I = (-I_{s}, s \geq 0)$. From Theorem \ref{teo1} and \ref{teo2}, we have that 
\begin{eqnarray*}
\left(\frac{1}{n} H_{n^{\frac{\alpha}{\alpha -1}}s}^{F}, 0 \leq s \leq \tau_{n}  \right) \xrightarrow[n \rightarrow \infty]{d} \left(\frac{1}{\bar{c}} H_{s}, 0 \leq s \leq \varrho_{b_{i} \bar{c}^{-1}} \right),
\end{eqnarray*}

\noindent under $\mathbf{P}^{(\mathbf{i})}$. Let $(F_{k}, k \geq 1)$ be the tree
components of the multitype GW forest $F$. Then, the above convergence implies that
\begin{eqnarray*}
\lim_{n \rightarrow \infty} \mathbf{P}^{(\mathbf{i})}\left( \max_{1 \leq k \leq n} \text{ht}(F_{k}) < n \right) & = &\mathbf{P} \left(  H_{s} \leq \bar{c} , \, \text{for all} \, \,  0 \leq s \leq \varrho_{b_{i} \bar{c}^{-1}}\right) \\
& = & \exp \left(  -\frac{b_{i}}{\bar{c_{i}}} N \left (\frac{1}{\bar{c}} \sup H \geq 1 \right)\right) \\
& = & \exp \left( -b_{i}  (\underline{\alpha}-1) \left( (\underline{\alpha}-1) \bar{c} \right)^{\frac{\underline{\alpha}}{1-\underline{\alpha}}} \right),
\end{eqnarray*}

\noindent where $N$ is the It\^o excursion measure of $Y^{(\underline{\alpha})}$ above its infimum
(see e.g. Chapter VIII.2 in \cite{Be} for details), and where we have used the Corollary 
1.4.2 in \cite{Du} for the equality. Recall that 
under $\mathbf{P}^{(\mathbf{i})}$, the tree components $(F_{k}, k \geq 1)$ are independent
multitype GW trees. Therefore, the identity
\begin{eqnarray*}
 \mathbf{P}^{(\mathbf{i})}\left( \max_{1 \leq k \leq n} \text{ht}(F_{k}) < n \right) = \left( 1-  \mathbf{P}^{(i)}\left(  \text{ht}(T) \geq n \right) \right)^{n} .
\end{eqnarray*}

\noindent yields our claim. 
\end{proof}

\subsection{Alternating two-type GW tree}

We consider a particular family of multitype GW trees 
known as alternating two-type GW trees, in which vertices of type $1$ only 
give birth to vertices of type $2$ and vice versa. More precisely, given
two probability measures $\mu_{2}^{(1)}$ and $\mu_{1}^{(2)}$ on $\mathbb{Z}_{+}$, we consider a 
two-type GW tree where every vertex of type $1$ (resp. type 2) has a number of
type 2 (resp. type 1) children distributed according to $\mu_{2}^{(1)}$ (resp. $\mu_{1}^{(2)}$), all independent of each other. 
We denote by ${\bm \mu}_{\text{alt}}$ the offspring distribution on $\mathbb{Z}_{+}^{2}$ 
of this particular two-type GW tree. We let 
\begin{eqnarray*}
 m_{12} = \sum_{z \in \mathbb{Z}_{+}} z \mu_{2}^{(1)}(\{ z \}) 
 \hspace*{6mm} \text{and} \hspace*{6mm} 
 m_{21} = \sum_{z \in \mathbb{Z}_{+}} z \mu_{1}^{(2)}(\{ z \})
\end{eqnarray*}

\noindent be the means of the measures $\mu_{2}^{(1)}$ and $\mu_{1}^{(2)}$, respectively.
We make the assumption that $\mu_{2}^{(1)}(\{ 1 \}) + \mu_{1}^{(2)}(\{ 1 \}) < 2$ to discard
degenerate cases, and also exclude the trivial case $m_{1}m_{2} = 0$. We observe
that the mean matrix associated with ${\bm \mu}_{\text{alt}}$ is irreducible and it admits
$\rho = m_{1}m_{2}$ as a unique positive eigenvalue. We then say that ${\bm \mu}_{\text{alt}}$ 
is sub-critical if $m_{1}m_{2} < 1$, critical if $m_{1}m_{2} = 1$ and supercritical
if $m_{1}m_{2} > 1$. In the sequel, we assume that offspring distribution is also
critical. We observe then that the normalized left and right $1$-eigenvectors are given by
\begin{equation*}
\mathbf{a} = (a_{1}, a_{2})= \left(\frac{1}{1+m_{1}}, \frac{1}{1+m_{2}} \right),
 \hspace*{5mm} \text{and} \hspace*{5mm} 
 \mathbf{b} = (b_{1}, b_{2}) = \left(\frac{1+m_{1}}{2}, \frac{1+m_{2}}{2} \right). 
\end{equation*}

Following the notation of Section \ref{Law}, we denote by $\mathbf{P}_{\text{alt}}^{(i)}$
the law of a two-type GW tree with offspring distribution ${\bm \mu}_{\text{alt}}$ 
and root type $i \in [2]$, i.e., it is the law of an alternating two-type GW tree with root 
type $i$. We make the next extra assumptions on the offspring 
distribution:
\begin{itemize}
 \item[($\mathbf{H}_{1}^{\prime}$)] $\mu_{2}^{(1)}$ is a geometric distribution, i.e. 
there exists $p \in (0,1)$ such that 
\begin{eqnarray*}
 \mu_{2}^{(1)}(\{z\}) = (1-p)p^{z}, \hspace*{6mm} z \in \mathbb{Z}_{+}.
\end{eqnarray*}

\noindent We observe that its Laplace exponent satisfies
 \begin{eqnarray*}
 \psi_{1}(s) = \frac{p}{1-p} s + \frac{1}{2} \frac{p}{(1-p)^{2}} s^{2} +o(s^{2}), \hspace*{6mm} s \downarrow 0,
\end{eqnarray*}
for $s \in \mathbb{R}_{+}$. In particular, $m_{1} = p/(1-p)$.

\item[($\mathbf{H}_{2}^{\prime}$)] $\mu_{1}^{(2)}$ is in the domain of attraction of a stable 
law of index $\alpha \in (1,2]$, that is, its Laplace exponent satisfies  
\begin{eqnarray*}
 \psi_{2}(s) = m_{2} s + s^{\alpha} L(s) + o(s^{\alpha}), \hspace*{6mm} s \downarrow 0,
\end{eqnarray*}
for $s \in \mathbb{R}_{+}$ and where $L: \mathbb{R}_{+} \rightarrow \mathbb{R}_{+}$
is a slowly varying function at zero.
\end{itemize}

The following result is a conditioned version of Theorem \ref{teo1} for
this particular two-type GW tree. More precisely, we show that after a proper 
rescaling the height process of a critical alternating two-type GW tree whose 
offspring distribution satisfies ($\mathbf{H}_{1}^{\prime}$) and ($\mathbf{H}_{2}^{\prime}$)
converges to the normalized excursion of the continuous-time height process 
associated with a strictly stable spectrally positive L\'evy process with index 
$\alpha$. We stress that the improvement of the convergence in
Theorem \ref{teo1} is because we are able to establish a conditioned version of 
Proposition \ref{pro4} for this very particular GW tree. This allows us to adapt
the proof of Theorem 2 in \cite{Gr} without making the extra assumption that the offspring 
distribution has small exponential moments. 

Before providing a rigorous statement, we need to introduce some further 
notation. We consider a function $\bar{L}: \mathbb{R}_{+} \rightarrow \mathbb{R}_{+}$ 
given by
 \begin{eqnarray} \label{eq19}
 \bar{L}(s) = \left( \frac{1}{2} \frac{p}{(1-p)^2} a_{1} b_{2}^{2} \mathds{1}_{\{ \alpha =2 \}} + a_{2} b_{1}^{\alpha} L(s)  \right),
 \hspace*{6mm} \text{for} \, \,  s \in \mathbb{R}_{+},  
 \end{eqnarray}

\noindent which is a slowly varying function at zero. We write $\tilde{L}: \mathbb{R}_{+} \rightarrow \mathbb{R}_{+}$ 
for a slowly varying function at infinity that satisfies
 \begin{eqnarray*} 
 \lim_{s \rightarrow \infty} \left( \frac{1}{\tilde{L}(s)} \right)^{\alpha}\bar{L}\left(\frac{1}{s^{1/\alpha} \tilde{L}(s)} \right) = 1,  
 \end{eqnarray*}

\noindent This function is known in the literature as the conjugate of $\bar{L}$. The existence
of such a function is due to a result of de Bruijn; for a proof of this fact and more 
information about conjugate functions, see Section 1.5.7 in \cite{Bi}. In what follows,
we let $(B_{n}, n\geq 1)$ be a sequence positive integers such that 
$B_{n} = \tilde{L}(n)n^{1/\alpha}$. 

Finally, recall the definition of the discrete height process associated to a 
tree $\mathbf{t} \in \mathbb{T}$; see \cite{Du1} for details and properties.
Let us denote by $\# \mathbf{t}$ the total progeny of $\mathbf{t}$, and $\varnothing = u_{\mathbf{t}}(0) \prec  u_{\mathbf{t}}(1) \prec \dots \prec u_{\mathbf{t}}(\# \mathbf{t} -1)$
be the list of vertices of $\mathbf{t}$ in depth-first order. The height process 
$H^{\mathbf{t}} = (H_{n}^{\mathbf{t}}, n \geq 0)$ is defined by $H_{n}^{\mathbf{t}} = |u_{\mathbf{t}}(n)|$, 
with the convention that $H_{n}^{\mathbf{t}} = 0$ for $n \geq \# \mathbf{t}$.

\begin{theorem} \label{teo3} 
Let $T$ be an alternating two-type GW tree distributed according to 
$\mathbf{P}^{(1)}_{{\rm alt}}$.
Then for $j = 1,2$, under the law $\mathbf{P}^{(1)}_{{\rm alt}}(\cdot | \#T_{j} = n)$, 
the following convergence in distribution holds on 
$\mathbb{D}([0,1], \mathbb{R})$:
  \begin{eqnarray*}
  \left( \frac{B_{n}}{n} H^{T}_{\lfloor \#T s \rfloor}, 0 \leq s \leq 1 \right) 
  \xrightarrow[n \rightarrow \infty]{d} \left( a_{j}^{1/\alpha-1} H_{s}^{\text{exc}}, 0 \leq s \leq 1 \right), 
 \end{eqnarray*}
 \noindent where $H^{\text{exc}}$ is the normalized excursion of the continuous-time
 height process process associated with a 
 strictly stable spectrally positive L\'evy process $Y^{(\alpha)} = (Y_{s}, s \geq 0)$ 
 of index $\alpha$ and with Laplace exponent $\mathbb{E}(\exp(-\lambda Y_{s})) = \exp(-s \lambda^{\alpha})$,
 for $\lambda \in \mathbb{R}_{+}$.
\end{theorem}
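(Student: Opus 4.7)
The plan is to mimic the proof of Theorem \ref{teo1} in the conditioned setting, reducing to the conditioned invariance principle of Duquesne (Theorem 3.1 in \cite{Du1}) applied to the monotype skeleton $\Pi^{(j)}(T)$. A routine extension of Proposition \ref{pro1} to the slowly-varying Laplace-exponent setting of $(\mathbf{H}_2')$ shows that under $\mathbf{P}^{(1)}_{\mathrm{alt}}$, $\Pi^{(j)}(T)$ is a critical non-degenerate monotype GW tree/forest whose offspring distribution lies in the domain of attraction of an $\alpha$-stable law, with slowly-varying correction precisely $\bar L$ as defined in (\ref{eq19}). Since the vertices of $\Pi^{(j)}(T)$ are exactly the type-$j$ vertices of $T$, the event $\{\#T_j=n\}$ is the same as the event that $\Pi^{(j)}(T)$ has $n$ vertices, so Duquesne's theorem immediately gives, under $\mathbf{P}^{(1)}_{\mathrm{alt}}(\,\cdot\mid\#T_j=n)$,
\begin{equation*}
\left(\frac{B_n}{n}H^{\Pi^{(j)}(T)}_{\lfloor ns\rfloor}\right)_{0\le s\le 1}\xrightarrow[n\to\infty]{d}\left(\gamma_j H^{\mathrm{exc}}_s\right)_{0\le s\le 1},
\end{equation*}
where $\gamma_j$ is the explicit scaling constant prescribed by the Laplace exponent of the offspring distribution of $\Pi^{(j)}(T)$ and $B_n$ is the sequence defined through the conjugate function $\tilde L$ of $\bar L$.

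I would then invoke Proposition \ref{pro3} and equation (15) of \cite{Gr} to obtain, as in the proof of Theorem \ref{teo1}, the pathwise approximation $H^T_k \approx (a_j b_j)^{-1}H^{\Pi^{(j)}(T)}_{\Lambda_j^T(k)}$ at the proper scale. The only missing ingredient is a conditioned analogue of Proposition \ref{pro4} that handles the random time change $\Lambda_j^T(\lfloor \#T\, s \rfloor)$: one needs
\begin{equation*}
\left(\frac{\Lambda_j^T(\lfloor \#T\, s\rfloor)}{n}\right)_{0\le s\le 1}\xrightarrow[n\to\infty]{P}(s)_{0\le s\le 1}
\end{equation*}
uniformly on $[0,1]$ under $\mathbf{P}^{(1)}_{\mathrm{alt}}(\,\cdot\mid\#T_j=n)$. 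Combining these two convergences yields the claimed limit with constant $\gamma_j/(a_j b_j)$; a short computation using the scaling property of $H^{\mathrm{exc}}$ together with the definitions of $\bar c$, $\bar L$ and $B_n$ then identifies this constant with $a_j^{1/\alpha-1}$.

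The main obstacle is the conditioned convergence of types. To set it up, write $\#T = \#T_j+\sum_{k<\#T_j}K(u(k))$, where $K(u(k))$ counts the non-$j$ children of the $k$-th type-$j$ vertex of $T$ in depth-first order. Under the unconditional law the pairs $(K(u(k)),c_{\Pi^{(j)}(T)}(u(k)))_{k\ge 0}$ form an i.i.d.\ sequence, and the conditioning event $\{\#T_j=n\}$ is a first-passage constraint on the Lukasiewicz walk built from the second coordinate. A conditioned law of large numbers for two-dimensional i.i.d.\ random walks whose Lukasiewicz coordinate lies in the domain of attraction of an $\alpha$-stable law (obtained via the local limit theorems developed in \cite{Du1}) then yields $n^{-1}\sum_{k<n}K(u(k))\to\mathbb{E}[K]$ in probability under the conditioning, hence $\#T/n\to 1/a_j$. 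An essentially identical argument on partial sums gives the uniform convergence $G_j^T(\lfloor ns\rfloor)/n \to s/a_j$, from which the convergence of $\Lambda_j^T$ follows by inversion, exactly as in the proof of Proposition \ref{pro4}. The key structural point is that the explicit form of the joint law of $(K,c)$, made transparent by the geometric assumption $(\mathbf{H}_1')$, allows this conditioned LLN to be run using only moment bounds, replacing the sub-exponential bounds of Lemma \ref{lemma2} that are no longer directly applicable once we condition, and thereby avoiding the small exponential moment hypothesis required in \cite{Gr}.
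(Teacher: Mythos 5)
Your high-level plan coincides with the paper's: pass to the skeleton $\Pi^{(j)}(T)$, invoke Duquesne's conditioned invariance principle, and prove a conditioned analogue of Proposition~\ref{pro4} to handle the time change $\Lambda_j^T$. But there are two substantive gaps.

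First, you assert that since $\{\#T^{(j)}=n\}$ equals the event that $\Pi^{(j)}(T)$ has $n$ vertices, ``Duquesne's theorem immediately gives'' the conditioned convergence. This is only immediate for $j=1$, where the root of $T$ has type $1$ and $\Pi^{(1)}(T)$ is a single tree. For $j=2$, the root of $T$ has type $1$, so the roots of $\Pi^{(2)}(T)$ are the type-$2$ children of $\varnothing$ and $\Pi^{(2)}(T)$ is a \emph{forest} with a random number of components. Conditioning a forest on its total size is not a direct application of Duquesne's theorem for conditioned trees; the paper handles this by first conditioning on the number of components $\Upsilon^j=r$ (tightness coming from Lemma~\ref{lemma3}(ii)) and then citing the result that under such conditioning only one of the $r$ trees is macroscopic (Theorem~5.4 of \cite{Ko3}). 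Your sketch skips this entirely.

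Second, and more fundamentally, the mechanism you propose for the conditioned convergence of types is genuinely different from the paper's and is left at the level of a hope. The paper proves Proposition~\ref{pro5} by invoking the Janson--Stef\'ansson bijection $\mathcal{G}$: the geometric hypothesis $(\mathbf{H}_1')$ is \emph{exactly} what makes $\mathcal{G}(T)$ a monotype GW tree, and under $\mathcal{G}$ the quantity $\#T^{(1)}$ becomes the number of leaves of a monotype GW tree, so the conditioned LLN reduces to Kortchemski's already established results on leaves of conditioned monotype GW trees (Lemmas~2.2, 2.5, 2.7 of \cite{Ko4}). You instead propose a ``conditioned law of large numbers for two-dimensional i.i.d.\ random walks whose Lukasiewicz coordinate lies in the domain of attraction of an $\alpha$-stable law, obtained via the local limit theorems developed in \cite{Du1}.'' No such two-dimensional conditioned LLN exists in \cite{Du1}, and the assertion that the geometric assumption lets one ``run'' such a result ``using only moment bounds'' is not justified; the heavy-tailed coordinate is $c_{\Pi^{(j)}(T)}(u(k))$, the conditioning is a first-passage constraint on precisely that heavy-tailed walk, and establishing the required conditioned LLN uniformly along the path would itself be a nontrivial piece of work (essentially a rediscovery of the kind of estimate Kortchemski proves). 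The geometric assumption plays a concrete, structural role in the paper's argument — it makes the bijection work — whereas in your sketch its role is invoked only vaguely.

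In short: the reduction to $\Pi^{(j)}(T)$ and the overall skeleton of the proof are right, but you need to (i) deal with the forest-with-random-number-of-components issue for $j=2$, and (ii) either carry out in detail the two-dimensional conditioned LLN you propose or, as the paper does, route through the Janson--Stef\'ansson bijection so that the geometric hypothesis $(\mathbf{H}_1')$ can be exploited via existing monotype results.
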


In recent years, this special family of two-type GW trees has been the subject
of many studies due to their remarkable relationship with the study of several 
important objects and models of growing relevance in modern probability 
such that random planar maps \cite{Le}, 
percolation on random maps \cite{Ko2}, 
non-crossing partitions \cite{Cy}, to mention just a few. On the other 
hand, up to our knowledge the result of Theorem \ref{teo3} has not been proved before 
under our assumptions on the offspring distribution. Therefore, we believe that this
may open the way to investigate new aspects related to the models mentioned before. 

The proof of Theorem $\ref{teo3}$ relies on some intermediate results. We 
let $T$ be a two-type GW tree with law $\mathbf{P}^{(1)}_{\text{alt}}$. We first 
characterize the law of the reduced forest $\Pi^{(j)}(T)$, for $j = 1,2$. 

\begin{corollary} \label{coro1}
 For $j =1,2$, under the law $\mathbf{P}^{(1)}_{{\rm alt}}$, the tree 
 $\Pi^{(j)}(T)$ is a critical monotype GW forest with non-degenerate offspring
 distribution $\bar{\mu}_{j}$ in the domain of attraction of a stable
 law of index $\alpha$, i.e., its Laplace exponent satisfies that
 \begin{eqnarray*}
  \bar{\psi}_{j}(s) = s +  \frac{1}{a_{j}}\left( \frac{s}{b_{j}}\right)^{\alpha} \bar{L}(s) + o(s^{\alpha}), 
  \hspace{6mm} s \downarrow 0.
 \end{eqnarray*}
 
 \noindent for $s \in \mathbb{R}_{+}$ and where the function $\bar{L}$ is defined in 
 (\ref{eq19}).
\end{corollary}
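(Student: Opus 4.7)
The plan is to reduce Corollary \ref{coro1} to Proposition \ref{pro1} applied with $d=2$, and then to make the Laplace exponent explicit by exploiting the very rigid alternating offspring structure.

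First I would verify that the offspring distribution ${\bm \mu}_{\text{alt}}$ satisfies the main assumptions of Section \ref{ass} with $\underline{\alpha}=\alpha$. Irreducibility, non-degeneracy and criticality follow from $m_{1}m_{2}=1$ and the standing assumption $\mu_{2}^{(1)}(\{1\})+\mu_{1}^{(2)}(\{1\})<2$. Because type $1$ individuals produce only type $2$ children and vice versa, the bivariate Laplace exponents reduce to
\begin{eqnarray*}
 \psi^{(1)}(s_{1},s_{2}) = \psi_{1}(s_{2}),\qquad \psi^{(2)}(s_{1},s_{2}) = \psi_{2}(s_{1}).
\end{eqnarray*}
Hypothesis $(\mathbf{H}_{2}.\mathbf{1})$ then holds for type $2$ with $\alpha_{2}=\alpha$ (with possibly a slowly varying coefficient $L$), while for type $1$ one obtains $(\mathbf{H}_{2}.\mathbf{1})$ with $\alpha_{1}=2$ when $\alpha=2$ and $(\mathbf{H}_{2}.\mathbf{2})$ with $\alpha_{1}=\alpha$ when $\alpha<2$; in either case $\underline{\alpha}=\alpha$.

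Proposition \ref{pro1} then already tells us that $\Pi^{(j)}(T)$ is a critical non-degenerate monotype GW forest whose offspring law lies in the domain of attraction of an $\alpha$-stable law. What remains is to identify the leading non-linear term of the corresponding Laplace exponent $\bar{\psi}_{j}$, which is where the argument specialises. I would re-run the $d$-to-$(d{-}1)$-type operation of Lemma \ref{lemma1} with $d=2$. Since $\psi^{(2)}(s_{1},s_{2})=\psi_{2}(s_{1})$ does not depend on $s_{2}$, the fixed point equation defining $\tilde{\psi}^{(2)}$ trivializes and yields $\tilde{\psi}^{(2)}(s)=\psi_{2}(s)$. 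Consequently,
\begin{eqnarray*}
 \bar{\psi}_{1}(s) = \psi_{1}\bigl(\psi_{2}(s)\bigr),\qquad \bar{\psi}_{2}(s) = \psi_{2}\bigl(\psi_{1}(s)\bigr).
\end{eqnarray*}

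Finally I would expand these compositions as $s\downarrow 0$ using $(\mathbf{H}_{1}')$ and $(\mathbf{H}_{2}')$. Because $\psi_{2}(s)\sim m_{2}s$, slow variation gives $L(\psi_{2}(s))\sim L(s)$, and the criticality identity $m_{1}m_{2}=1$ ensures that the linear term in $\bar{\psi}_{j}(s)$ equals $s$. For $\alpha<2$ only the contribution $m_{1}s^{\alpha}L(s)$ (respectively $m_{1}^{\alpha}s^{\alpha}L(s)$ for $j=2$) survives, while for $\alpha=2$ one must also retain the quadratic term $c_{1}m_{2}^{2}s^{2}$ coming from $\psi_{1}$, where $c_{1}=\tfrac{1}{2}p/(1-p)^{2}$. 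Using the explicit formulas for $\mathbf{a}$ and $\mathbf{b}$, the criticality relations translate into the identities $a_{2}/a_{1}=m_{1}$ and $b_{2}/b_{1}=m_{2}$, which are exactly what is needed to repackage the leading non-linear term as $\tfrac{1}{a_{j}}(s/b_{j})^{\alpha}\bar{L}(s)$ with $\bar{L}$ as in (\ref{eq19}). The hardest bookkeeping will be the case $\alpha=2$, in which both $\psi_{1}$ and $\psi_{2}$ contribute genuine $s^{2}$ terms that must be combined and shown to match the two-term expression for $\bar{L}$; everything else reduces to routine substitutions using only the explicit forms of $\mathbf{a}$, $\mathbf{b}$ together with $m_{1}m_{2}=1$.
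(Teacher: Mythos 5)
Your proposal is correct and follows essentially the same route as the paper: the paper's proof is the one-liner ``follows from Lemma~\ref{lemma1} after simple computations,'' and you have simply carried out those computations, specializing the $d$-to-$(d{-}1)$-type operation to the alternating $d=2$ case where the fixed-point equation collapses to $\bar{\psi}_1=\psi_1\circ\psi_2$, $\bar{\psi}_2=\psi_2\circ\psi_1$, and then using $a_2/a_1=m_1$, $b_2/b_1=m_2$ (consequences of $m_1m_2=1$) to repackage the leading coefficients into the stated form with $\bar{L}$.
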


\begin{proof}
 The results follows from Lemma \ref{lemma1}, after some simple computations. 
\end{proof}

The next step in order to pass from unconditional statements to conditional ones is the 
following estimate for the number of vertices of some specific type in
multitype GW trees. 

\begin{lemma} \label{lemma3}
 Let $T$ be a $d$-type GW tree distributed according to $\mathbf{P}^{(i)}$,
for $i \in [d]$. Then, for every $j \in [d]$: 
\begin{itemize}
 \item[(i)] For some constant $C_{ij} > 0$, we have that
 \begin{eqnarray*}
   \mathbf{P}^{(i)}\left(\# T^{(j)} = n \right)  =  C_{ij} n^{-1 - 1/\underline{\alpha}} + o(n^{-1 - 1/\underline{\alpha}}),
   \hspace*{6mm} \text{as} \, \, n \rightarrow \infty,
 \end{eqnarray*}
\end{itemize}
 \noindent where it is understood that the limit is taken along values 
 for which the probability on the left-hand side is strictly positive.

 \begin{itemize}
  \item[(ii)] The laws of the number of tree components of $\Pi^{(j)}(T)$, under 
  $\mathbf{P}^{(i)}(\cdot | \# T^{(j)}=n)$, converge weakly as $n \rightarrow \infty$. 
 \end{itemize}
\end{lemma}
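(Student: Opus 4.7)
The plan is to observe that $\# T^{(j)} = \# \Pi^{(j)}(T)$, so by Proposition~\ref{pro1} the quantity of interest is the total progeny of a critical monotype Galton--Watson forest with offspring law $\bar{\mu}^{(j)}$ in the domain of attraction of a stable law of index $\underline{\alpha}$. Write $K$ for the (random) number of its tree components under $\mathbf{P}^{(i)}$. The cases $i = j$ and $i \neq j$ should be treated separately, and (ii) will follow from the proof of (i) by a size-biasing argument.

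In the case $i = j$, the root of $T$ is itself a type-$j$ vertex with no proper ancestor, so $K \equiv 1$ and $\Pi^{(j)}(T)$ is a single monotype Galton--Watson tree. The asymptotics $\mathbf{P}^{(j)}(\# T^{(j)} = n) \sim C_{jj}\,n^{-1-1/\underline{\alpha}}$ are then a direct application of Lemma~1 in \cite{Ko} (already invoked in the proof of Lemma~\ref{lemma2}), which combines the Otter--Dwass formula with the discrete local limit theorem for random walks in the domain of attraction of a stable law of index $\underline{\alpha}$. Part~(ii) is trivial in this case since $K$ is deterministic.

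For $i \neq j$, I would use the Jagers stopping-line decomposition underlying Proposition~\ref{pro1}: the subtrees of $T$ rooted at the first type-$j$ descendants of the root are i.i.d.\ copies of a $d$-type Galton--Watson tree under $\mathbf{P}^{(j)}$ and are independent of their total number $K$. Writing $(Z_l)_{l\geq 1}$ for the corresponding i.i.d.\ sizes, the previous case provides $\mathbb{P}(Z_1 = n) \sim C_{jj}\,n^{-1-1/\underline{\alpha}}$, and standard one-big-jump asymptotics for heavy-tailed lattice sums yield
\begin{equation*}
\mathbb{P}\biggl(\sum_{l=1}^k Z_l = n\biggr) \underset{n\to\infty}{\sim} k\,C_{jj}\,n^{-1-1/\underline{\alpha}}, \qquad \mathbb{P}\biggl(\sum_{l=1}^k Z_l = n\biggr) \leq c\,k\,n^{-1-1/\underline{\alpha}}.
\end{equation*}
Combined with $\mathbf{P}^{(i)}(\# T^{(j)} = n) = \sum_{k\geq 1}\mathbf{P}^{(i)}(K=k)\,\mathbb{P}(\sum_{l=1}^k Z_l = n)$ and dominated convergence, this gives $\mathbf{P}^{(i)}(\# T^{(j)} = n) \sim \mathbb{E}^{(i)}[K]\,C_{jj}\,n^{-1-1/\underline{\alpha}}$, proving (i). Bayes' rule together with the same asymptotics then shows that the conditional law of $K$ given $\# T^{(j)} = n$ converges pointwise to the size-biased distribution $k\,\mathbf{P}^{(i)}(K=k)/\mathbb{E}^{(i)}[K]$, and Scheff\'e's lemma upgrades this to weak convergence, giving~(ii).

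The hard part will be justifying the uniform-in-$k$ domination used in the dominated convergence step and checking $\mathbb{E}^{(i)}[K] < \infty$. Finiteness of $\mathbb{E}^{(i)}[K]$ comes from the linear recursion $(I - \mathbf{M}_{-j})\mathbf{E} = \mathbf{m}_{\cdot j}$, where $\mathbf{M}_{-j}$ is the principal submatrix of $\mathbf{M}$ indexed by $[d]\setminus\{j\}$; irreducibility together with criticality of $\mathbf{M}$ force $\mathbf{M}_{-j}$ to have spectral radius strictly less than $1$ by Perron--Frobenius theory, so $(I-\mathbf{M}_{-j})^{-1}$ exists and $\mathbf{E}$ is a finite nonnegative vector. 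The uniform local asymptotic is the only genuinely technical input; it is classical for one-dimensional lattice i.i.d.\ sums with tail index $1 + 1/\underline{\alpha} > 1$ once the local limit theorem from the case $i = j$ is in hand.
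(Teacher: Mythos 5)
Your proposal follows exactly the route of Miermont's Lemmas~6--7 in \cite{Gr}, which is precisely what the paper cites (the paper's ``proof'' of Lemma~\ref{lemma3} is a one-line reference to those lemmas with mild modifications). The decomposition $\#T^{(j)}=\sum_{l=1}^K Z_l$ via the stopping line of first-reached type-$j$ vertices, the reduction to the $i=j$ case via Kortchemski's local estimate, and the Bayes/Scheff\'e argument for part (ii) all check out; the Perron--Frobenius argument for $\mathbf{E}^{(i)}[K]<\infty$ (spectral radius of the principal submatrix $\mathbf{M}_{-j}$ strictly below $1$) is also correct.

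One remark on the ``genuinely technical input'' you flag: the uniform-in-$k$ bound is available in closed form via the same tool you already invoke, so it is less of a gap than you suggest. By the cycle lemma (Otter--Dwass/Kemperman), $\mathbb{P}(Z_1+\cdots+Z_k=n)=\frac{k}{n}\,\mathbb{P}(W_n=-k)$, where $W$ is the left-continuous random walk with step law $\bar\mu^{(j)}(\cdot+1)$, and Gnedenko's local limit theorem in the stable domain gives $\sup_{m\in\mathbb{Z}}\mathbb{P}(W_n=m)=O(n^{-1/\underline\alpha})$. This single estimate yields simultaneously the fixed-$k$ asymptotics $\mathbb{P}(\sum_{l=1}^k Z_l=n)\sim k\,C_{jj}\,n^{-1-1/\underline\alpha}$ and the uniform domination $\mathbb{P}(\sum_{l=1}^k Z_l=n)\leq c\,k\,n^{-1-1/\underline\alpha}$ for all $1\leq k\leq n$ (the probability vanishes for $k>n$ since each $Z_l\geq 1$). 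Thus you do not need separate ``one-big-jump'' or locally-subexponential results, only the uniform local limit theorem that already underlies the $i=j$ case, and dominated convergence over $k$ is then harmless with only $\mathbf{E}^{(i)}[K]<\infty$ as input.
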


\begin{proof}
This very similar to Lemma 6 and Lemma 7 in \cite{Gr} and 
the proof is carried out with mild modifications.
\end{proof}
 
 Finally, the last ingredient is a conditioned version of Proposition \ref{pro4} 
 for the alternating two-type GW tree. 

\begin{proposition} \label{pro5}
 For $j = 1,2$, under $\mathbf{P}^{(1)}_{{\rm alt}}(\cdot | \#T^{(j)} = n)$, we have that
 \begin{eqnarray*}
  \left( \frac{\Lambda_{j}^{T}(\lfloor \# T s \rfloor)}{n}, 0 \leq s \leq 1 \right) 
  \underset{n \rightarrow \infty}{\rightarrow} \left(s, 0 \leq s \leq 1 \right), 
 \end{eqnarray*}

 \noindent in probability.
 \end{proposition}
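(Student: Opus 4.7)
We treat the case $j=1$ in detail; the case $j=2$ is entirely analogous after replacing the tree $\Pi^{(1)}(T)$ with the forest $\Pi^{(2)}(T)$, whose total size equals $n$ under the conditioning. Since $\Lambda_1^T$ is the right-continuous inverse of the monotone function $k \mapsto G_1^T(k)$, and both sides of the asserted limit are monotone in $s$ with a continuous deterministic limit, it suffices to prove the pointwise in-probability statement
\[
\frac{G_1^T(\lfloor nt \rfloor)}{\#T} \xrightarrow[n\to\infty]{\mathbf{P}} t, \qquad t \in [0,1],
\]
under $\mathbf{P}^{(1)}_{\mathrm{alt}}(\,\cdot \mid \#T^{(1)} = n)$.

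Set $\tau := \Pi^{(1)}(T)$. By Corollary \ref{coro1}, under the conditional law $\tau$ is a $\bar\mu_1$-GW tree conditioned to have $n$ vertices, and the DFS order of type-1 vertices of $T$ coincides with the DFS order of $\tau$. Denoting by $\xi(u)$ the number of type-2 children of $u$ in $T$, the alternating structure yields the decomposition
\[
G_1^T(k) = k + \sum_{\substack{u \in \tau \\ u \prec u_\tau(k)}} \xi(u) - E_k,
\]
where $E_k$ counts those type-2 children of strict ancestors of $u_\tau(k)$ in $\tau$ that are visited \emph{after} $u_\tau(k)$ in the DFS of $T$; in particular $0 \le E_k \le |u_\tau(k)| \cdot \max_{u \in \tau} \xi(u)$. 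A conditional version of Lemma \ref{lemma2} (obtainable from the functional height convergence of \cite{Du1} applied to the stable GW tree $\tau$, or by absolute continuity of the Lukasiewicz bridge against the unconditioned walk on proper sub-intervals) yields $\max_{k \le n} |u_\tau(k)| = O(n^{1-1/\alpha+\eta})$ w.h.p., while the exponential tails of $\xi(u)$ (uniform in $c_\tau(u)$) give $\max_{u \in \tau} \xi(u) = O(\log n)$ w.h.p.; hence $E_k = o_{\mathbf{P}}(n)$ uniformly in $k \le n$.

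The core of the argument is a conditional weak law of large numbers for the decoration sum. Given $\tau$, the $\xi(u)$'s are independent, with $\xi(u)$ conditioned on $c_\tau(u)=c$ distributed as a $\mathrm{Geom}(p)$ random variable $\xi$ conditioned on $\sum_{i=1}^\xi \eta_i = c$, the $(\eta_i)$ being i.i.d.\ $\mu_1^{(2)}$. Setting $f(c) := \mathbb{E}[\xi \mid c]$, a Chebyshev bound using the conditional variance reduces the goal to
\[
\frac{1}{n} \sum_{i=0}^{\lfloor nt \rfloor - 1} f\bigl(c_\tau(u_\tau(i))\bigr) \xrightarrow[n\to\infty]{\mathbf{P}} t \cdot m_1,
\]
where the identity $\mathbb{E}_{\bar\mu_1}[f] = \mathbb{E}[\xi] = m_1$ is the tower property. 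The sequence $(c_\tau(u_\tau(i)))_{i<n}$ is the offspring sequence in DFS order of the conditioned GW tree $\tau$, equivalently the shifted increments of its Lukasiewicz bridge, so the display above follows for every $t<1$ from bridge-to-walk absolute continuity on $[0,nt]$ combined with the ordinary weak LLN for a random walk with step distribution $\bar\mu_1(\,\cdot\, +1)$ on $\{-1,0,1,\ldots\}$; the endpoint $t=1$ is covered by the deterministic identity $G_1^T(n) = \#T$. Assembling the pieces gives $G_1^T(\lfloor nt \rfloor) = tn(1+m_1) + o_{\mathbf{P}}(n)$; specializing to $t=1$ yields $\#T = n(1+m_1) + o_{\mathbf{P}}(n)$, and dividing delivers the claimed convergence.

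\textbf{Main obstacle.} The conditional LLN for $\sum f(c_\tau(u_\tau(i)))$ is the principal technical step: conditioning on $\#T^{(1)}=n$ turns the offspring sequence into the increments of a Lukasiewicz bridge rather than of an i.i.d.\ walk. Translating between bridge and walk on sub-intervals $[0,nt]$ with $t<1$ is a standard but non-trivial device that requires as input the $\alpha$-stable local limit theorem for $\bar\mu_1$; the endpoint $t=1$ is borderline and must be handled separately via the deterministic identity $G_1^T(n)=\#T$, after which uniformity in $t$ comes for free from the monotonicity of $G_1^T$.
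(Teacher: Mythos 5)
Your proposal takes a genuinely different route from the paper, and as presented it contains at least one real gap.

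The paper's proof is built entirely around the Janson--Stef\'ansson bijection $\mathcal{G}$, which maps the alternating two-type tree $T$ under $\mathbf{P}^{(1)}_{\mathrm{alt}}$ to a \emph{monotype} GW tree in which type-$1$ vertices become leaves. This converts $\Lambda_1^T(\#T) = \#T^{(1)}$ into a leaf count, so the whole conditional law of large numbers (both the ratio $\Lambda_1^T(\lfloor \#T s\rfloor)/\#T \to a_1$ and the convergence $\#T/n \to 1/a_1$) is imported directly from Kortchemski's results on monotype GW trees conditioned on their number of leaves (his Lemmas 2.2, 2.5, 2.7). No bridge-to-walk transfer, no decomposition of $G_1^T$, and no control of decoration sums over the spine are required: that technical work has already been done in the cited references. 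The geometric assumption $(\mathbf{H}_1')$ is precisely what makes the bijection produce a GW tree, so the hypothesis is used structurally, not just to get exponential tails for $\xi$. The paper handles $j=2$ by the deterministic identity $\Lambda_1^T(\#T)+\Lambda_2^T(\#T)=\#T$ rather than by redoing the argument for the forest $\Pi^{(2)}(T)$.

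Your plan --- decompose $G_1^T(k) = k + \sum_{u \prec u_\tau(k)}\xi(u) - E_k$ over the reduced tree $\tau = \Pi^{(1)}(T)$, control the error $E_k$ via a height-times-max-degree bound, and prove a conditional LLN for the decoration sum by bridge/walk absolute continuity --- is a legitimate alternative, but the step ``exponential tails of $\xi(u)$ (uniform in $c_\tau(u)$) give $\max_u\xi(u) = O(\log n)$'' is not justified and is actually false in the boundary case $\alpha = 2$. Conditionally on $c_\tau(u) = c$, the distribution of $\xi(u)$ is proportional to $p^k\,\mu_1^{(2)*k}(\{c\})$; for $\alpha = 2$ the local limit theorem makes $\mu_1^{(2)*k}(\{c\})$ essentially vanish for $k$ much smaller than $c\,m_1$, so the conditional mode of $\xi(u)$ grows linearly in $c$. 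Since the maximal $c_\tau(u)$ in a conditioned size-$n$ stable GW tree is of order $n^{1/\alpha}$, your bound $|u_\tau(k)|\cdot\max_u\xi(u) = O(n^{1-1/\alpha+\eta})\cdot O(n^{1/\alpha})$ can be of order $n^{1+\eta}$, which is not $o(n)$. To salvage the estimate you would need the sharper bound $E_k \le \sum_{u\,\vdash\,u_\tau(k)}\xi(u)$ together with control of the decoration sum along the ancestral line (where the maximal-degree vertex is typically not present); this is doable but requires a genuine argument, not a crude product bound. The other piece you defer --- the conditional LLN for $\sum f(c_\tau(u_\tau(i)))$ via bridge/walk comparison near the endpoint --- is exactly the kind of work the JS bijection lets the paper avoid. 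So while the overall strategy is sound, the proof as written needs both the $E_k$ estimate repaired and the bridge-to-walk step carried out, and it ends up redoing technical work that the paper outsources to \cite{Ko4}.
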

 
 \begin{proof}
 We prove the statement only when $j=1$. The case $j=2$ follows by making occasional 
 changes in the proof below, observing that
 \begin{eqnarray*}
 \Lambda_{1}^{T}( \# T ) + \Lambda_{2}^{T}( \# T ) =  \# T^{(1)} + \# T^{(2)} = \# T.
 \end{eqnarray*}

 We based our proof on a bijection $\mathcal{G}$ due to Janson and Stef\'anson \cite{JaS}
  which maps the alternating two-type GW tree to a standard monotype GW tree. Roughly speaking, 
 this mapping has the property that every vertex of type $1$ is mapped to a leaf, and every
 type $2$ vertex with $k \geq 0$ children is mapped to a vertex with $k+1$ children (the 
 interest reader is refereed to Section $3$ in \cite{JaS}, for details). Moreover, Janson
 and Stef\'anson showed that under $\mathbf{P}^{(1)}_{{\text{alt}}}$,  $\mathcal{G}(T)$
 is a monotype GW tree with offspring distribution given by 
 \begin{eqnarray*}
  \nu(\{0\}) = 1-p, \hspace*{5mm}  \text{and} \hspace*{5mm} 
  \nu(\{z\}) = p \mu_{2} (\{z\}), \hspace*{4mm}
   \text{for} \, \, \, z \in \mathbb{N}.
 \end{eqnarray*}

We notice that $\Lambda_{1}^{T}( \# T ) = \# T^{(1)}$ is exactly the number of leaves of the monotype GW tree $\mathcal{G}(T)$.
Then, Lemma 2.5 in \cite{Ko4} which is a law of large numbers for the number of
leaves of monotype GW trees, implies that for every $\varepsilon > 0$, 
 \begin{eqnarray*}
  \mathbf{P}^{(1)}_{\text{alt}} \left( \sup_{0 \leq s \leq 1} \left| \frac{\Lambda_{1}^{T}(\lfloor \# T s \rfloor)}{\# Ts}- (1-p) \right| > \varepsilon   \Big  | \# T \geq n\right) = \text{oe}(n).
 \end{eqnarray*}

 \noindent We observe that the left $1$-eigenvector $a_{1} = 1-p$. By Lemma \ref{lemma3}, 
 we deduce that 
 \begin{eqnarray} \label{eq23}
  \mathbf{P}^{(1)}_{\text{alt}} \left( \sup_{0 \leq s \leq 1} \left| \frac{\Lambda_{1}^{T}(\lfloor \# T s \rfloor)}{\# Ts}- a_{1} \right| > \varepsilon   \Big  | \# T^{(1)} = n\right) = \text{oe}(n).
 \end{eqnarray}
 
 \noindent Then, if we admit for a while that 
   \begin{eqnarray} \label{eq14}
  \mathbf{P}^{(1)}_{\text{alt}} \left(  \left| \frac{\# T}{n}- \frac{1}{a_{1}} \right| > \varepsilon \Big| \# T^{(1)} = n \right) = \text{oe}(n).
 \end{eqnarray}
 
 \noindent We conclude the proof by combining the above estimate and (\ref{eq23}). 
 
 Let us now turn to the proof of (\ref{eq14}). First, we observe that for $0 < \varepsilon <
 a_{1}^{-1}$, we have that 
  \begin{align} \label{eq18}
  \mathbf{P}^{(1)}_{\text{alt}} \left(  \left| \frac{\# T}{n}- \frac{1}{a_{1}} \right| > \varepsilon, \# T^{(1)} = n \right) & = \mathbf{P}^{(1)}_{\text{alt}} \left(  \# T > \left(\frac{1}{a_{1}} + \varepsilon \right)n , \# T^{(1)} = n \right)  \nonumber \\
 & ~~~~~~~~ + \mathbf{P}^{(1)}_{\text{alt}} \left(  \# T < \left(\frac{1}{a_{1}} - \varepsilon \right)n , \# T^{(1)} = n \right). 
 \end{align}
 
 \noindent The idea is to show that the two term on the right-hand side
 are $\text{oe}(n)$. We start with the first term. We notice that 
 \begin{eqnarray*}
  \mathbf{P}^{(1)}_{\text{alt}} \left(  \# T > \left(\frac{1}{a_{1}} + \varepsilon \right)n , \# T^{(1)} = n \right)
  \leq \sum_{k =n}^{\infty} \mathbf{P}^{(1)}_{\text{alt}} \left(  \# T  = k, \# T^{(1)} < \left(\frac{1}{a_{1}} + \varepsilon \right)^{-1}n \right)
 \end{eqnarray*}
 
 \noindent By recalling that $\# T^{(1)}$ is 
 the number of leaves of the monotype GW tree $\mathcal{G}(T)$,  
 Lemma 2.7 (ii) in \cite{Ko4} implies that terms in the sum are $\text{oe}(n)$. 
 This entails that the first term on the right-hand side of (\ref{eq18}) is $\text{oe}(n)$. 
 We now focus on the second term. We write
 \begin{eqnarray*}
  \mathbf{P}^{(1)}_{\text{alt}} \left(  \# T > \left(\frac{1}{a_{1}} + \varepsilon \right)n , \# T^{(1)} = n \right)
  \leq \sum_{k =n}^{\lfloor (a_{1}^{-1} - \varepsilon)n \rfloor } \mathbf{P}^{(1)}_{\text{alt}} \left(  \# T  = k, \# T^{(1)} > \left(\frac{1}{a_{1}} - \varepsilon \right)^{-1}n \right)
 \end{eqnarray*}
 
 \noindent By using Proposition 1.6, we get that 
  \begin{eqnarray*}
  \mathbf{P}^{(1)}_{\text{alt}} \left(  \# T > \left(\frac{1}{a_{1}} + \varepsilon \right)n , \# T^{(1)} = n \right)
  \leq \sum_{k =n}^{\lfloor (a_{1}^{-1} - \varepsilon)n \rfloor } \frac{1}{n} \mathbf{P}^{(1)}_{\text{alt}} \left( \frac{1}{r} \sum_{r=1}^{k} \mathbf{1}_{\{ X_{r} = -1\}} > \left(\frac{1}{a_{1}} - \varepsilon \right)^{-1} \right),
 \end{eqnarray*}
 
 \noindent where $(X_{r}, r \geq 1)$ is a sequence of i.i.d. random variables 
 with common distribution $\nu (\{\cdot + 1 \})$ on $\{-1\} \cup \mathbb{Z}_{+}$. 
 Then, an application of Lemma 2.2 (i) in \cite{Ko4} shows that this is $\text{oe}(n)$.
 Therefore, we have proved that 
   \begin{eqnarray} \label{eq22}
  \mathbf{P}^{(1)}_{\text{alt}} \left(  \left| \frac{\# T}{n}- \frac{1}{a_{1}} \right| > \varepsilon, \# T^{(1)} = n \right) = \text{oe}(n).
 \end{eqnarray}
 
 \noindent Finally, an appeal to Lemma \ref{lemma3} (i) completes the proof of (\ref{eq14}).
 \end{proof}

We have now all the ingredients to give the proof of  Theorem \ref{teo3}.

\begin{proof}[Proof of Theorem \ref{teo3}]
Recall from Corollary \ref{coro1} that $\Pi^{(j)}(T)$ under $\mathbf{P}^{(1)}_{\text{alt}}$
is a non-degenerate, critical GW forest with offspring distribution
$\bar{\mu}_{j}$ in the domain of attraction of a stable law of index $\alpha \in (1,2]$. 
Thus, by first conditioning on the number of tree components, we obtain using
Lemma \ref{lemma3} (ii) and Theorem 3.1 \cite{Du1} that under 
$\mathbf{P}^{(1)}_{\text{alt}}(\cdot | \# T^{(j)} = n)$,
 \begin{eqnarray*}
   \left( \frac{B_{n}}{n} H^{\Pi^{(j)}(T)}_{\lfloor ns \rfloor}, 0 \leq s \leq 1 \right) 
  \xrightarrow[n \rightarrow \infty]{d} \left( a_{j}^{1/\alpha} b_{j} H_{s}^{\text{exc}}, 0 \leq s \leq 1 \right), 
  \end{eqnarray*}
  
\noindent where the convergence is in distribution on $\mathbb{D}([0,1], \mathbb{R})$.
  To see this, we observe that conditional on the number of tree components
 to be $r$, the GW forest $\Pi^{(j)}(T)$ is composed 
  of $r$ independent GW trees with the same offspring distribution 
  $\bar{\mu}_{j}$. On the other hand, conditioning the sum of their size to 
  be $n$, only one of these trees has size of order $n$, while the other $r-1$ trees
  have total size $o(n)$ with high probability. This implies that the latter
  do not contribute to the limit. We refer to Theorem 5.4 in \cite{Ko3} for details. Then, 
  from Proposition \ref{pro5}, we obtain that under 
  $\mathbf{P}^{(1)}_{\text{alt}}(\cdot | \# T^{(j)} = n)$,
   \begin{eqnarray} \label{eq20}
   \left( \frac{B_{n}}{n} H^{\Pi^{(j)}(T)}_{\Lambda_{j}^{T} (\lfloor \# T s \rfloor)}, 0 \leq s \leq 1 \right) 
  \xrightarrow[n \rightarrow \infty]{d} \left( a_{j}^{1/\alpha} b_{j} H_{s}^{\text{exc}}, 0 \leq s \leq 1 \right), 
  \end{eqnarray}
  
  \noindent in distribution. 
  
On the other hand, recall from the proof of Theorem \ref{teo1}
that for $n \geq 0$ and any $s\geq 0$, we have
\begin{eqnarray} \label{eq21}
 \left| H_{\lfloor \# T s \rfloor}^{T} - \frac{H_{\Lambda_{j}^{T}(\lfloor \# T s \rfloor)}^{\Pi^{(j)}(T)}}{a_{j}b_{j}} \right| \leq \left| H_{\lfloor \# T s \rfloor}^{T} - \frac{\text{Anc}_{T}^{u(\lfloor \# T s \rfloor)}(j)}{a_{j}b_{j}}  \right| +
 R_{n}(s),
 \end{eqnarray}
 
\noindent where 
\begin{eqnarray*}
 |R_{n}(s)| \leq \frac{1}{a_{j}b_{j}} \left(2 \max_{0 \leq k \leq n} \left| H_{k-1}^{\Pi^{(j)}(T)} - H_{k}^{\Pi^{(j)}(T)} \right| +1 \right).
\end{eqnarray*}

\noindent Therefore, it must be clear that our claim follows from the convergence (\ref{eq20})
by providing that the two terms on the right-hand side of (\ref{eq21}) are $o(n/B_{n})$ in probability,
uniformly in $s \in [0,1]$.

 In this direction, we observe from (\ref{eq22}) that 
 $\mathbf{P}^{(1)}_{\text{alt}}(\# T >  \delta n | \# T^{(j)} = n) = \text{oe}(n)$
 for any $\delta > a_{j}^{-1}$. Combining this with Proposition \ref{pro3}, we have
 for $0 < \gamma < \frac{1}{2}(1-1/\alpha)$ and some $C>0$ that
 \begin{align}
  & \mathbf{P}^{(1)}_{\text{alt}}\left( \frac{B_{n}}{n} \max_{0 \leq k \leq \# T} \left| H_{k}^{T} - \frac{\text{Anc}_{T}^{u(k)}(j)}{a_{j}b_{j}}  \right| \geq n^{ -\frac{1}{2}(1-1/\alpha) + \gamma}   \Big| \# T^{(j)}=n\right) \nonumber \\ 
   & ~~~~~~~~ \leq C n^{1+1/\alpha}  \mathbf{P}^{(\mathbf{1})}_{\text{alt}}\left( \frac{B_{n}}{n} \max_{0 \leq k \leq \lfloor \delta n \rfloor} \left| H_{k}^{T} - \frac{\text{Anc}_{T}^{u(k)}(j)}{a_{j}b_{j}}  \right| \geq n^{ -\frac{1}{2}(1-1/\alpha) + \gamma}  \right) + \text{oe}(n) = \text{oe}(n), \nonumber
  \end{align}

 \noindent where $\mathbf{P}^{(\mathbf{1})}_{\text{alt}}$ is the law of alternating two-type 
 GW forest with all its root having type $1$. This show that first term on the right-hand side of (\ref{eq21})
 is $o(n/B_{n})$ in probability, uniformly in $s \in [0,1]$.
 
 Finally, let $\Upsilon^{j}$ be the number of tree components of 
 $\Pi^{(j)}(T)$. Then the law of $\Pi^{(j)}(T)$ under the measure 
 $\mathbf{P}^{(1)}_{\text{alt}} (\cdot | \Upsilon^{j} = r)$ is that of a monotype 
 GW forest with $r$ tree components. Using Theorem 5.4 in \cite{Ko3}, one conclude that 
 for $\varepsilon >0$,
 \begin{eqnarray*}
  \lim_{n \rightarrow \infty} \mathbf{P}^{(1)}_{\text{alt}} \left( \sup_{0 \leq s \leq 1} \frac{B_{n}}{n} |R_{n}(s)| \geq \varepsilon \Big|  \#T^{(j)} = n, \Upsilon^{j} = r \right) = 0.
 \end{eqnarray*}

 \noindent By Lemma \ref{lemma3} (ii), we know that the law of $\Upsilon^{j}$ under 
 $\mathbf{P}^{(1)}_{\text{alt}} ( \cdot |  \#T^{(j)} = n)$ are tight as $n$ varies. Thus, 
  we deduce that the second term on the right-hand side of (\ref{eq21})
 is also $o(n/B_{n})$ in probability, uniformly in $s \in [0,1]$.
\end{proof}

\paragraph{Acknowledgements.} I would like to thank 
Jean Bertoin for several useful discussions and
for his comments on an earlier draft of this manuscript. \\
This work is supported by the Swiss National Science Foundation 200021\_144325/1

 
 \providecommand{\bysame}{\leavevmode\hbox to3em{\hrulefill}\thinspace}
\providecommand{\MR}{\relax\ifhmode\unskip\space\fi MR }
\providecommand{\MRhref}[2]{%
  \href{http://www.ams.org/mathscinet-getitem?mr=#1}{#2}
}
\providecommand{\href}[2]{#2}

\end{document}